\definecolor{cstblue}{rgb}{0,0,0.85}
\definecolor{cstred}{rgb}{0,0,0}
\newcommand{\f}[2]{\frac{#1}{#2}}
\def\del{\partial}
\newcommand{\B}{\mathcal{B}}
\newcommand{\RR}{\mathbb{R}}
\newcommand{\eps}{\varepsilon}
\newcommand{\D}{\mathrm{D}}
\newcommand{\BBN}[1]{\left\| #1 \right\|}
\newcommand{\SP}{\hspace{1pt}}
\newcommand{\DSP}{\hspace{2pt}}
\newtheorem{theorem}{Theorem}[subsection]
\newtheorem{proposition}[theorem]{Proposition}
\newtheorem{definition}[theorem]{Definition}
\newtheorem{lemma}[theorem]{Lemma}
\newtheorem{remark}[theorem]{Remark}
\title{Stability and Convergence of Relaxation Schemes \\ to
      Hyperbolic Balance Laws via a Wave Operator\\ {\footnotesize
(In: {\it Journal of Hyperbolic Differential Equations (2015), 12-1, 189-219.})}}
\author{Alexey Miroshnikov\thanks {Department of Mathematics, University of Massachusetts, Amherst, MA 01003, USA} \;   and \, Konstantina Trivisa
\thanks{Department of Mathematics, University of Maryland, College Park, MD 20742, USA} }
\begin{document}

\numberwithin{equation}{section}

\date{}
\maketitle

\begin{center}
\textbf{Abstract}
\end{center}
\noindent
This article deals with relaxation approximations of nonlinear systems of hyperbolic balance laws.
We introduce a class of relaxation schemes and establish their stability and convergence  to the solution of hyperbolic balance laws before the formation of shocks, provided that we are within the framework of the compensated compactness method.
Our analysis treats systems of hyperbolic balance laws with source terms satisfying a special mechanism   which induces weak dissipation in the spirit of Dafermos \cite{Dafermos06}, as well as  hyperbolic balance laws with more general source terms.
The rate of convergence of the relaxation system to a  solution of the balance laws in the smooth regime is established.
Our work follows in spirit  the analysis presented in \cite{AMT-2005, JX-1995} for systems of hyperbolic conservation laws without source terms.


\section{Introduction}\label{S1}
Relaxation approximations of hyperbolic balance laws is of essence for the investigation of models arising in continuum mechanics and kinetic theory of gases, and serve as a ground stage for the design of numerical schemes for hyperbolic balance laws.
In this article we introduce a class of  relaxation schemes for the approximation of solutions to the hyperbolic balance law
\begin{equation}\label{BLAWMDIM}
\displaystyle \del_t u+ \sum_{j=1}^d \del_{x_j} F_j(u) = G(u), \quad u \in
\RR^n, \,\, (x,t)\in\RR^d \times [0,\infty)\\\\
\end{equation}
and address the issues of stability and convergence.

\par\smallskip

The class of relaxation schemes introduced in this work are of the form
\begin{equation}\label{RXSYSTMDIM}
\left\{
\begin{aligned}
\del_t u + \sum_{j=1}^d \del_{x_j} v_j &= 0 \\
\del_t v_i + A_i \del_{x_i} u &= -\frac{1}{\eps} \Bigl( v_i-F_i(u) +
\mathcal{R}_i(x,t)\Bigr), \quad i=1,\dots,d
\end{aligned} \right.
\end{equation}
with $v_i\in \RR^n$, $A_i$ symmetric, positive definite matrix, and
\begin{equation}\label{RFUNCDEFMDIM}
    \mathcal{R}_i(x,t) = \frac{1}{d}\int^{x_i} G(u(x_1,\dots,
    x_{i-1},z,x_{i+1},\dots,x_d,t))\,dz.
\end{equation}

\par\smallskip

Excluding $v_i$ for all $i=1,\dots,d$ from the equation \eqref{RXSYSTMDIM}$_1$ we obtain
\begin{equation}\label{RXBLAWMDIM}
\begin{aligned}
\del_t u + \sum_{j=1}^d \del_{x_j} F_j(u) = G(u) + \eps \Bigl(\sum_{j=1}^d A_j u_{{x_j}
{x_j}} - u_{tt}\Bigr)
\end{aligned}
\end{equation}
that approximates the system of balance laws \eqref{BLAWMDIM}.
For the relaxation approximations considered in this work the stabilization mechanism is the regularization by the {\em wave operator}  in \eqref{RXBLAWMDIM}.

The convergence properties of relaxation systems and associated relaxation schemes for scalar conservation laws are presently well understood (see \cite{AN-1996, CLL-1994, JX-1995, N-1996}). When the zero-relaxation limit is a system of conservation laws, the dissipative effect of relaxation is often subtle to capture,  yet there is a vast literature on convergence results in that direction (see \cite{AMT-2005,Serre-2000, GT-2001, T-1998, T-1999} and the references there rein).

By  contrast, the relaxation approximation of nonlinear systems of  hyperbolic balance laws presents
major additional challenges and it is the subject of current intense research investigation. It is well known that standard methods that solve correctly
systems of conservation laws can fail in solving systems of balance laws, especially when approaching equilibria or near to equilibria solutions. In addition,  standard approximating procedures produce often unstable methods when they are applied to
coupled systems of conservation or balance laws. Even at the theoretical level, the
presence of the production term (source term) in the equation  results to the
amplification in time of even small oscillations in the solution.
Due to these challenges special mechanisms that induce dissipation are often desirable and have been proven effective in obtaining long-term stability (cf. Dafermos \cite{Dafermos06}, \cite{Dafermos10}).

In the present article  we identify a class of relaxation schemes suitable
for the approximation of solutions to certain systems of hyperbolic balance laws arising
in continuum physics. The relaxation schemes proposed in our work provide a very
effective mechanism for the approximation of the solutions of these systems with a very
high degree of accuracy.

\par\smallskip

The main contribution of the present article to the existing theory can be characterized
as follows:
\begin{itemize}
\item A new class of relaxation schemes \eqref{RXSYSTMDIM} is  introduced.
The novelty of the  relaxation systems proposed in this work lies in the introduction of the {\em global term} $\{\mathcal{R}_i(x,t)\}$  in \eqref{RXSYSTMDIM}-\eqref{RFUNCDEFMDIM}. The presence of this term  in the relaxation system allows us to relax both the flux and the antiderivative of the source term simultaneously.
We refer the reader to the article \cite{KM-2003}, where a relevant idea was proposed  for the numerical treatment of shallow water equations.
The present article is the first step towards the construction of fully discrete  schemes and the development of  numerical methods  for the approximation of solutions  to complex nonlinear multidimensional systems of hyperbolic balance laws arising in applications.
Our analysis provides a rigorous
proof of the relaxation  limit and a rate of convergence before the formation of shocks.

\item A comparison is presented between the relaxation  system introduced here and  an alternative relaxation system for which the source term $G(u)$ appears in the right-hand side of the first equation (see  \eqref{RXSYSTMDIMALT} in Section \ref{S6}). Note, dealing with  \eqref{RXSYSTMDIMALT} one faces arduous challenges. More specifically, {the time derivative} of the source term appears in the energy functional posing enormous challenges in the analysis,  an additional hypothesis is required for the establishment of stability (see (H7), Section \ref{S6}), the issue of compactness is problematic.

\item The presence of a source term in our system requires us to modify the relative entropy method significantly. The  {\em modified relative method}  presented in this work  relies on a {\em relative potential} (Section \ref{S8}).
The introduction of this concept is required in order to deal with the source term $G$ which typically satisfies no growth conditions.
The relative potential assists in ``tracking" the contribution of the  source term; it becomes a part of a Lyapunov functional which monitors the evolution of the difference of the solutions and enables us to establish a convergence rate of  order ${\mathcal O}(\varepsilon^2).$ In the case of the general source $G$ the terms associated with the source  are treated as error (cf.\ Section 6.4).
\end{itemize}

The reader should contrast the relaxation system presented in this work with other relaxation systems proposed in the literature \cite{MT-2013a, N-1996}, where relaxation approximations to hyperbolic balance laws were proposed and rigorously established. In \cite{MT-2013a}  relaxation approximations are  constructed by the introduction of special variables the so-called {\em internal variables}. In that setting applications to physical systems in elasticity and combustion theory are presented. In the former case, relaxation is introduced  via {\em stress approximation}, whereas in the latter case  via {\em approximation of pressure}.
Reference \cite{N-1996}  treats the Cauchy problem for $2 \times 2$  semilinear and quasilinear hyperbolic systems with a singular relaxation term and  presents the convergence to equilibrium of the solutions of these problems as the singular perturbation parameter tends to $0.$

\par\smallskip

 In the center of our analysis lies the entropy structure of the balance law and the dissipative nature of the source term.
The main ingredients of our approach can be formulated as follows:
\begin{itemize}
\item The representation of the {\em global term} ${\mathcal R}$ in the formulation of the relaxation system  enables us to obtain the stability estimates in Section \ref{S3}. These estimates are used subsequently to establish the stability of the relaxation approximations and in fact justify the dissipative character of our   systems.

\item The entropy structure of the balance law provides the basis for the use of the compensated compactness method. Recall that   a pair of functions $\eta = \eta(u), q= q(u)$ are called the entropy-entropy flux pair if $(\eta, q)$ solve the linear hyperbolic system
$$ \D q = \D\eta \D F.$$
In Section \ref{S5}, we show that the relaxation approximations satisfy
$$\del_t \eta(u^{\eps}) + \del_x q(u^{\eps}) \subset \mbox{compact set of}\,\, H^{-1}_{loc}({\mathcal O})$$
{for a certain class of entropy-entropy flux pairs $\eta-q$, which is one of the main ingredients for the establishment of convergence results within the compensated compactness method (cf. Serre \cite{Serre-book}).}

\item
The Lyapunov functional construction in Section \ref{S8}  relies on the  relative entropy via a Chapman-Enskog-type expansion and is used to provide a simple and direct convergence framework  before formation of shocks as well as suitable error estimates.

\item A physically motivated dissipation mechanism
associated with the source term in \eqref{BLAWMDIM}. The concept of {\em weak dissipation} for hyperbolic balance laws was
introduced by Dafermos in \cite{Dafermos06}. The reader may contrast the result of Theorem \ref{WDESTTHM}  for weakly
dissipative source terms with Theorem \ref{gen-source} which  corresponds to the case of a
general source.
\end{itemize}

\par\smallskip

The outline of our article is as follows: In Section \ref{S2}  we present the basic notation and hypothesis.
In Section \ref{S3} we present the stability estimates which yield the stability of the relaxation systems. The compactness properties of the approximate solutions are discussed in Section \ref{S4}.
Section \ref{S5} is devoted to error estimates for smooth solutions via
the relative entropy method as well as proofs of convergence. The multidimensional case is treated in Section \ref{S7}. Section \ref{S8} presents applications in elasticity and combustion.

\section{Notation and Hypotheses}\label{S2}
For the convenience of the reader we collect in this section all the relevant notation and hypotheses.
Here and in what follows:
\begin{enumerate}
\item[] {\bf 1.} $G, R, F_i$, $i=1,\dots,d$ denote the mappings
$G, R,  F_{i}: \RR^n \to \RR^n$. In our presentation, $G(u), R(u), F_i(u)$ are treated as column vectors.

\item[] {\bf 2.} $\D$ denotes the differential with respect to the state vectors $u \in \RR^n$. When used in conjunction with matrix notation, $\D$ represents  a row operation:
\begin{equation*}
\D=[\del/\del u^1, \dots, \del/\del u^n ].
\end{equation*}
\end{enumerate}

\subsection{Entropy Structure}

Some additional assumptions on the system \eqref{BLAWMDIM} read:
\begin{itemize}
\item The system \eqref{BLAWMDIM} is equipped with a globally
defined entropy $\eta(u)$ and corresponding fluxes $q_{i}(u)$, $i=1,\dots,d$, such that
\begin{equation}\tag{H1}\label{RXENTPROP}
\begin{aligned}
& \eta:\RR^n \rightarrow \RR  \,\,\, \mbox{is strictly convex}, \\
& {\D}\eta(u) \SP {\D}F_i(u)={\D} q_i (u)\\
& \beta \SP {\bf I} \leq D^2 \eta (u) \leq \tfrac{1}{2} \alpha \SP {\bf I}, \quad u\in \RR^n, \,\, \mbox{for}\,\,  \alpha,\beta>0 \SP ,\\
& \eta(u) \ge \eta(0) =0, \SP \D\eta(0)=0.
\end{aligned}
\end{equation}
\end{itemize}
We recall that $2 \times 2$ as well as physical systems of hyperbolic conservation laws are always equipped with an entropy-entropy flux pair. The same holds true for symmetric hyperbolic systems \cite{Dafermos10}.

\par\smallskip

\subsection{Subcharacteristic condition}
The Whitham relaxation {\em subcharacteristic condition} presented below will be essential for the dissipativiness of our system \cite{AN-1996, Dafermos10, HN03, N-1996}.
\begin{itemize}

\item
(Case $d=1$. Systems with a strictly convect entropy).

For $\alpha>0$ in \eqref{RXENTPROP} there exists $\nu>0$ and symmetric, positive definite matrix $A$ such that
\begin{equation}\label{AHYPSCENT1D}
\tfrac{1}{2}\bigl( A\D^2{\eta}(u) + \D ^2{\eta}(u)A\bigr)-\alpha \SP
\D F(u)^{\top} \D F(u)\geq\nu {\bf I}\,, \quad u\in \RR^n. \tag{H2}
\end{equation}
The positivity of this  term is required for dissipativiness in Lemma \ref{energy}.

\item (General case $d \geq 1$. Systems with a strictly convect entropy).

For $\alpha>0$ in \eqref{RXENTPROP} there exists $\nu>0$ and symmetric, positive definite matrices $A_j$, $j=1,\dots, d$ such that
\begin{equation}\label{AHYPSCENTMD} \tag{H2*}
\begin{aligned}
\tfrac{1}{2} \;  \xi_j^{\top} \bigl( A_j\D^2{\eta}(u) + \D ^2{\eta}(u)A_i\bigr) \xi_j -\alpha \bigg|\sum_{i=1}^d \SP
\D F(u)^{\top} \xi_j \bigg|^2 \geq\nu \sum_{i=1}^d |\xi_j|\,, \\
\forall \xi_1,\dots, \xi_d \in \RR^n, \;\; u\in \RR^n.
\end{aligned}
\end{equation}
\end{itemize}

\subsection{Dissipation}

The following hypotheses will be relevant to our subsequent discussion.
\begin{itemize}
\item  The source term $G(u)$ is {\it weakly dissipative} in the sense of Definition \ref{D1.1}.
\begin{definition} \label{D1.1} \rm
We say that the source $G(u)$ is {\it weakly dissipative}, if
\begin{equation}\tag{H3-a}\label{WDSRC}
\begin{aligned}
-\big(\D\eta(u)-\D\eta(\bar{u})\big)\big(G(u)-G(\bar{u})\big)  \geq  0 \,, \quad u,\bar{u} \in \RR^n.\,
\end{aligned}
\end{equation}

\end{definition}
\end{itemize}
An alternative condition on the source $G$, exploited in Theorem \ref{gen-source},
reads:
\begin{itemize}

\item Suppose that for every compact set $\mathcal{A}$ there exists
$L_{\mathcal{A}}>0$ such that
\begin{equation}\tag{H3-b}\label{LISRC}
    |G(u)-G(\bar{u})| \leq L_{\mathcal{A}} |u-\bar{u}|\,, \quad u \in \RR^n,\, \bar{u}\in\mathcal{A}\,.
\end{equation}
\end{itemize}

\par\smallskip

Through out the article we use the assumptions that $G(0)=0$ and $G\in C(\RR^n)$.

\subsection{Source potential}

For the weakly dissipative source $G$ we employ an additional assumption that it is a gradient:
\begin{itemize}
\item  Suppose there exists a potential $R(u):\RR^n\to\RR$ such that
\begin{equation}\tag{H4}\label{POTN}
\begin{aligned}
&G(u)=-\D R(u)^{\top}\\[1pt]
&R(u) \geq R(0)=0, \; \D R(0)^{\top}=0 \\[2pt]
&|\D R(u)|=|G(u)| \, \leq \, C_R \bigl(1+R(u)\bigr), \quad u\in\RR^n.
\end{aligned}
\end{equation}
\end{itemize}

\section{Stability estimates}\label{S3}

\subsection{Systems with a strictly convex entropy $\eta$, $d=1$.}

The balance law \eqref{BLAWMDIM} for $d=1$ reads
\begin{equation}\label{BLAW1D}
\partial_{t}u + \partial_{x}F(u) =
G(u),\quad  u\in\RR^n
\end{equation}
and the relaxation model by
\begin{equation}\label{RXSYST1D}
\left\{
\begin{aligned}
\partial_{t}u+\partial_{x}v& \SP = \SP 0\\
\partial_{t}v + A\partial_{x}u& \SP = \SP -\f{1}{\eps}\Bigl(v - F(u) +\int^{x} G\bigl(u(x,t)\bigr)\,dx\Bigr)
\end{aligned}\right.
\end{equation}
with $A$ symmetric, positive definite matrix. In that case the second order relaxation system \eqref{RXBLAWMDIM} reads
\begin{equation}\label{RXBLAW1D}
\partial_{t}u+\partial_{x}F(u)=G(u)+\eps(Au_{xx}-u_{tt}).
\end{equation}
\par\smallskip

We now consider the hyperbolic system \eqref{BLAW1D} that is equipped with the entropy-entropy flux pair $\eta-q$, with $\eta$ strictly convex, and establish stability results for the relaxation model \eqref{RXBLAW1D}.

\begin{lemma} \label{energy}
Suppose $u \equiv u^{\eps}(x,t)$ is a smooth solution to the equation \eqref{RXBLAW1D} on $\RR \times [0,T]$, $\eta-q$ is
the entropy-entropy flux pair of \eqref{BLAW1D} and $\bar{\alpha}\in\RR$ is fixed. Then, the following energy identity holds
\begin{equation}\label{STABIDSCENT1D}
\begin{aligned}
&\partial_{t}\bigg[\eta(u+\eps
u_{t})+\tfrac{1}{2}\eps^2\bar{\alpha} \SP |u_{t}|^2+\eps^{2}\bar{\alpha}\SP
u_x ^{\top} A u_x\\
&\qquad+\eps^2u_{t}^{\top} \Big(\,\tfrac{1}{2}\bar{\alpha} {\bf I}-\!
\int_{0}^{1}\!\int_{0}^{s} \D^2{\eta}(u+\eps\tau u_{t})\,d\tau ds\Big)u_{t} \bigg]+\partial_{x} q(u)\\[3pt]
&\quad+\eps\bar{\alpha}\big|u_{t}+\D F(u)u_{x}\big|^2+\eps u_{t}^{\top} \Big(\bar{\alpha} I-\D^2{\eta}(u)\Big)u_{t}\\[3pt]
&\quad+\eps u_{x} ^{\top} \Big(\D^2{\eta}(u)A-\bar{\alpha} \D F(u)^{\top}\D F(u)\Big)u_{x}\\[4pt]
&= \, \partial_{x}\Big(\eps \D {\eta}(u) Au_{x}+2\eps^2 \bar{\alpha} \SP u_{t}^{\top}
Au_{x}\Big)+\D{\eta}(u) G(u)+2\SP\eps  \bar{\alpha} u_{t}^{\top}G(u).
\end{aligned}
\end{equation}
\end{lemma}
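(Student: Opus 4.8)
The plan is to obtain \eqref{STABIDSCENT1D} by contracting the second-order relaxation equation \eqref{RXBLAW1D}, written as $u_t+F(u)_x=G(u)+\eps Au_{xx}-\eps u_{tt}$, against two multipliers and combining the resulting scalar balance relations, exactly in the spirit of the quadratic-entropy computation: the role of $u^{\top}$ (which is $\D\eta$ when $\eta=\tfrac12|u|^2$) is now played by the entropy variable $\D\eta(u)$, while the second multiplier is again $u_t^{\top}$. In each case I would move the wave-operator contributions into exact space/time derivatives plus quadratic remainders by integration by parts, using the symmetry of $A$ and of $\D^2\eta(u)$.

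First, multiplying by $\D\eta(u)$ and invoking \eqref{RXENTPROP}, the convective part collapses since $\D\eta(u)u_t=\partial_t\eta(u)$ and $\D\eta(u)F(u)_x=\D\eta(u)\D F(u)u_x=\D q(u)u_x=\partial_x q(u)$. Rewriting the wave operator through $\partial_x(\D\eta(u)Au_x)=\D\eta(u)Au_{xx}+u_x^{\top}\D^2\eta(u)Au_x$ and $\partial_t(\D\eta(u)u_t)=\D\eta(u)u_{tt}+u_t^{\top}\D^2\eta(u)u_t$ gives the first relation
\[
\partial_t\bigl(\eta(u)+\eps\D\eta(u)u_t\bigr)+\partial_x q(u)+\eps u_x^{\top}\D^2\eta(u)Au_x-\eps u_t^{\top}\D^2\eta(u)u_t=\D\eta(u)G(u)+\eps\partial_x\bigl(\D\eta(u)Au_x\bigr).
\]
Second, multiplying by $u_t^{\top}$ and using $u_t^{\top}Au_{xx}=\partial_x(u_t^{\top}Au_x)-\tfrac12\partial_t(u_x^{\top}Au_x)$ and $u_t^{\top}u_{tt}=\tfrac12\partial_t|u_t|^2$ yields
\[
\partial_t\Bigl(\tfrac{\eps}{2}|u_t|^2+\tfrac{\eps}{2}u_x^{\top}Au_x\Bigr)+|u_t|^2+u_t^{\top}\D F(u)u_x=u_t^{\top}G(u)+\eps\partial_x\bigl(u_t^{\top}Au_x\bigr).
\]

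I would then multiply the second relation by $2\eps\bar\alpha$ and add it to the first. This immediately reproduces $\partial_x q(u)$, the two divergence terms and the two source terms $\D\eta(u)G(u)+2\eps\bar\alpha u_t^{\top}G(u)$ on the right of \eqref{STABIDSCENT1D}. The leftover quadratic terms, namely $\eps u_x^{\top}\D^2\eta(u)Au_x-\eps u_t^{\top}\D^2\eta(u)u_t+2\eps\bar\alpha|u_t|^2+2\eps\bar\alpha u_t^{\top}\D F(u)u_x$, are regrouped using the algebraic identity $\eps\bar\alpha|u_t|^2+2\eps\bar\alpha u_t^{\top}\D F(u)u_x+\eps\bar\alpha u_x^{\top}\D F(u)^{\top}\D F(u)u_x=\eps\bar\alpha|u_t+\D F(u)u_x|^2$: the term $\eps\bar\alpha u_x^{\top}\D F(u)^{\top}\D F(u)u_x$ is supplied by adding and subtracting it, the subtracted copy combining with $\eps u_x^{\top}\D^2\eta(u)Au_x$ to form the last dissipation bracket, while the remaining $\eps\bar\alpha|u_t|^2-\eps u_t^{\top}\D^2\eta(u)u_t$ gives $\eps u_t^{\top}(\bar\alpha{\bf I}-\D^2\eta(u))u_t$.

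The one genuinely delicate step is reassembling the storage density. The combination above produces the time derivative of $\eta(u)+\eps\D\eta(u)u_t+\eps^2\bar\alpha|u_t|^2+\eps^2\bar\alpha u_x^{\top}Au_x$, which must be matched to the bracket in \eqref{STABIDSCENT1D} featuring $\eta(u+\eps u_t)$. Here I would use the second-order Taylor expansion with integral remainder along $s\mapsto\eta(u+s\eps u_t)$, namely $\eta(u+\eps u_t)=\eta(u)+\eps\D\eta(u)u_t+\eps^2\int_0^1\!\int_0^s u_t^{\top}\D^2\eta(u+\eps\tau u_t)u_t\,d\tau\,ds$, so that $\eta(u)+\eps\D\eta(u)u_t$ equals $\eta(u+\eps u_t)$ minus that double integral; absorbing this integral together with a split $\tfrac12\eps^2\bar\alpha|u_t|^2$ reproduces exactly the term $\eps^2u_t^{\top}\bigl(\tfrac12\bar\alpha{\bf I}-\int_0^1\!\int_0^s\D^2\eta(u+\eps\tau u_t)\,d\tau\,ds\bigr)u_t$. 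I expect this Taylor-remainder bookkeeping, rather than any analytic obstruction, to be the main point requiring care; everything else is the routine algebra of collecting $\eps$- and $\eps^2$-order terms.
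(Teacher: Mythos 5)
Your proposal is correct and follows essentially the same route as the paper: the paper's proof also multiplies \eqref{RXBLAW1D} by the two multipliers $u_t^{\top}$ and $\D\eta(u)$ to obtain exactly your two intermediate identities, combines them with the factor $2\bar{\alpha}\eps$, and invokes the same Taylor expansion with integral remainder \eqref{CHEXPENT} to reassemble the storage density around $\eta(u+\eps u_t)$. The completion-of-the-square regrouping into the three dissipation brackets and the splitting of $\eps^2\bar\alpha|u_t|^2$ are likewise identical to the paper's bookkeeping.
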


\begin{proof}
Multiplying \eqref{RXBLAW1D} by $u_{t}^{\top}$ we obtain
\begin{equation}\label{TID1SCENT}
\begin{aligned}
&\partial_{t}\Bigl(\tfrac{1}{2}\eps|u_{t}|^2+\tfrac{1}{2}\eps
u_{x}^{\top} Au_{x}\Bigr)+|u_{t}|^2+u_{t} ^{\top} \D F(u)u_{x}\\
 &  =\partial_{x}\Bigl(\eps u_{t} ^{\top}
Au_{x}\Bigr)+u_{t}^{\top} G(u).
\end{aligned}
\end{equation}
Similarly, multiplying \eqref{RXBLAW1D} by $\D{\eta}(u)$ we get
\begin{equation}\label{TID2SCENT}
\begin{aligned}
&\del_{t}\Big(\eta(u)+\eps \D{\eta}(u) u_{t}\Big)+\del_{x}\SP q(u) \\
& + \eps\Big(\big(\D^2{\eta}(u)u_{x}\big)^{\top}
Au_{x}-u_{t}^{\top} \D^2{\eta}(u)u_{t}\Big)\\
&=\partial_{x}\Bigl(\eps \D{\eta}(u) Au_{x}\Bigr)+\D{\eta}(u) G(u).
\end{aligned}
\end{equation}
Now, we multiply \eqref{TID1SCENT} by $2\bar{\alpha}\eps$, add $\eqref{TID2SCENT}$
and use the identity
\begin{equation}\label{CHEXPENT}
\eta(u+\eps u_{t})=\eta(u)+\eps \D{\eta}(u)u_{t}+\eps^2
u_{t}^{\top}\biggl(\int_{0}^{1}\!\!\int_{0}^{s}\!\D^2{\eta}(u+\eps\tau
u_{t})\,d\tau\,ds\biggr)u_{t}
\end{equation}
to deduce \eqref{STABIDSCENT1D}. This proves the lemma.
\end{proof}

{We now establish the stability of solutions $\{u^{\eps}\}$. For that we will require the matrix $A$ to satisfy the subcharacteristic condition \eqref{AHYPSCENT1D} and make use of hypotheses \eqref{WDSRC}-\eqref{POTN} to control the source $G$. In the sequel we will use the notation
\begin{equation}\label{PHIDEF}
    \varphi(t):=\int_{\RR}  |u^{\eps}|^2 + \eps^2|u^{\eps}_x|^2 + \eps^2|u^{\eps}_t|^2 \,dx\, 
\end{equation}
with $u^{\eps}$ denoting a solution of \eqref{RXBLAW1D}.}

\begin{proposition}[\bf Weakly dissipative source]\label{w-diss2}

Let $\{u^{\eps}(x,t)\}$ be a family of smooth solutions to the equation \eqref{RXBLAW1D} on $\RR \times [0,T]$. Suppose that $u \equiv u^{\eps}$ decays fast at infinity and that:

\begin{itemize}
\item[(a1)]  \eqref{RXENTPROP} holds true, and the positive definite, symmetric matrix $A$ is such that the subcharacteristic condition \eqref{AHYPSCENT1D} is valid.
\item[(a2)]

The conditions \eqref{WDSRC} and \eqref{POTN} for the source $G$ hold true.
 \end{itemize}

\par\smallskip

\noindent Then for all $t\in[0,T]$
\begin{equation}\label{PHIESTENTW1D}
  \varphi(t) + \eps \!\int_{\RR} R(u^{\eps}(x,t)) \,dx + \int_{0}^{t}\int_{\RR} |\D{\eta}(u)G(u)| \SP dx dt \, \leq \, C \Big( \varphi(0) + \eps \! \int_{\RR} R(u^{\eps}(x,0)) \,dx \Big)
\end{equation}
with $\varphi$ defined in \eqref{PHIDEF}
and $C=C(A,\alpha,\beta)>0$ independent of both $\eps$ and $T$.
\end{proposition}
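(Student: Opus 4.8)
The plan is to integrate the energy identity \eqref{STABIDSCENT1D} of Lemma \ref{energy} over $\RR\times[0,\tau]$ with the specific choice $\bar{\alpha}=\alpha$, so that the subcharacteristic condition \eqref{AHYPSCENT1D} renders the dissipation block on the left nonnegative, and to convert the two source terms on the right into (i) a sign-definite term and (ii) an exact time derivative. Crucially, no Gronwall argument will be needed, which is what yields a constant independent of $T$.

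First I would fix $\bar{\alpha}=\alpha$ and inspect the three quadratic forms on the left of \eqref{STABIDSCENT1D}. The form $\eps\alpha|u_t+\D F(u)u_x|^2$ is manifestly nonnegative, and for $\eps u_t^{\top}(\alpha I-\D^2\eta(u))u_t$ the bound $\D^2\eta\le\tfrac12\alpha I$ from \eqref{RXENTPROP} gives $\alpha I-\D^2\eta(u)\ge\tfrac12\alpha I\ge 0$. The decisive term is the third: since $A$ and $\D^2\eta(u)$ are symmetric, the form symmetrizes, $u_x^{\top}\D^2\eta(u)Au_x=\tfrac12 u_x^{\top}\bigl(A\D^2\eta(u)+\D^2\eta(u)A\bigr)u_x$, so with $\bar{\alpha}=\alpha$ it equals $\eps\, u_x^{\top}\bigl(\tfrac12(A\D^2\eta+\D^2\eta A)-\alpha\D F^{\top}\D F\bigr)u_x\ge\eps\nu|u_x|^2$ by \eqref{AHYPSCENT1D}. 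Hence the whole dissipation block is nonnegative and may be dropped.

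Next the source. Applying \eqref{WDSRC} with $\bar{u}=0$ and using $\D\eta(0)=0$, $G(0)=0$ gives $\D\eta(u)G(u)\le 0$, i.e. $\D\eta(u)G(u)=-|\D\eta(u)G(u)|$, which I move to the left as the desired term $+|\D\eta(u)G(u)|$. For the remaining contribution I would use \eqref{POTN}: $G(u)=-\D R(u)^{\top}$ yields $2\eps\alpha\, u_t^{\top}G(u)=-2\eps\alpha\,\D R(u)u_t=-2\eps\alpha\,\partial_t R(u)$, an exact time derivative that I absorb into the $\partial_t[\cdots]$ bracket. Integrating over $\RR\times[0,\tau]$ and using the fast decay of $u^{\eps}$ to kill the $\partial_x q(u)$ flux and the $\partial_x(\cdots)$ term on the right leaves
\begin{equation*}
\int_{\RR}\bigl[E+2\eps\alpha R(u)\bigr]\Big|_{\tau}\,dx+\int_{0}^{\tau}\!\!\int_{\RR}|\D\eta(u)G(u)|\,dx\,dt\le\int_{\RR}\bigl[E+2\eps\alpha R(u)\bigr]\Big|_{0}\,dx,
\end{equation*}
where $E$ denotes the energy density inside the bracket of \eqref{STABIDSCENT1D}.

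The final step is the norm equivalence $\int_{\RR}E\,dx\sim\varphi$, with $\varphi$ as in \eqref{PHIDEF}. From $\beta I\le\D^2\eta\le\tfrac12\alpha I$ together with $\eta(0)=0$, $\D\eta(0)=0$, Taylor expansion gives $\tfrac12\beta|u+\eps u_t|^2\le\eta(u+\eps u_t)\le\tfrac14\alpha|u+\eps u_t|^2$; the Chapman--Enskog remainder $M=\int_0^1\!\int_0^s\D^2\eta(u+\eps\tau u_t)\,d\tau\,ds$ obeys $\tfrac12\beta I\le M\le\tfrac14\alpha I$, so $\tfrac12\alpha I-M\ge\tfrac14\alpha I$ is positive definite; and $u_x^{\top}Au_x\sim|u_x|^2$ since $A$ is positive definite. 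Combining these with $|u|^2\le 2|u+\eps u_t|^2+2\eps^2|u_t|^2$ and its reverse gives $c_1\varphi(t)\le\int_{\RR}E\,dx\le c_2\varphi(t)$ with $c_1,c_2$ depending only on $A,\alpha,\beta$; since $R\ge 0$ by \eqref{POTN}, dividing by $\min(c_1,2\alpha,1)$ produces \eqref{PHIESTENTW1D} with $C=C(A,\alpha,\beta)$ independent of $\eps$ and $T$. I expect the main obstacle to be the symmetrization step combined with the exact matching $\bar{\alpha}=\alpha$: only this choice identifies the raw form $\D^2\eta(u)A$ with the symmetric combination appearing in \eqref{AHYPSCENT1D}, so that the subcharacteristic condition applies; the telescoping of $u_t^{\top}G$ through the potential and the sign of $\D\eta(u)G(u)$ via \eqref{WDSRC} are then the structural observations that neutralize the source.
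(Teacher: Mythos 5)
Your proposal is correct and follows essentially the same route as the paper's proof: integrate the energy identity \eqref{STABIDSCENT1D} with $\bar{\alpha}=\alpha$, use \eqref{AHYPSCENT1D} (via the symmetrization of $u_x^{\top}\D^2\eta(u)Au_x$) and \eqref{RXENTPROP} to make the dissipation block nonnegative, convert $\D\eta(u)G(u)$ into a sign-definite term via \eqref{WDSRC} and $2\eps\alpha\,u_t^{\top}G(u)$ into $-2\eps\alpha\,\partial_t R(u)$ via \eqref{POTN}, and conclude with the norm equivalence \eqref{PHIEQUIVENT} and no Gronwall step, which is exactly how the paper obtains a constant independent of $T$. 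Your explicit bounds on the Chapman--Enskog remainder and the symmetrization step merely spell out details the paper leaves implicit in \eqref{ALPHADOM} and in its appeal to \eqref{AHYPSCENT1D}.
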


\begin{proof} By \eqref{RXENTPROP} we have
\begin{equation}\label{ALPHADOM}
0 \SP \leq \SP u_{t}^{\top} \Bigl(\tfrac{1}{2}\alpha {\bf I}-\int_{0}^{1}\int_{0}^{s}\! \D^2{\eta}(u+\eps\tau
u_{t})\,d\tau ds\Bigr)u_{t} \SP \leq \SP \tfrac{1}{2}\alpha |u_t|^2.
\end{equation}
Then, integrating the identity \eqref{STABIDSCENT1D}, with $\bar{\alpha}=\alpha$, and using hypotheses \eqref{WDSRC}, \eqref{POTN} we obtain
\begin{equation}\label{STABIDSCENTWDSOURCE}
\begin{aligned}
&\int_{\RR}\Big(\eta(u+\eps u_{t})+\tfrac{1}{2}\eps^2\alpha|u_{t}|^2+\eps^2\alpha
u_{x}^{\top} Au_{x}+2 \eps \alpha R(u)\Big)\,dxdt\\
&\quad +\int_{0}^{t}\!\int_{\RR}\,\eps\nu|u_{x}|^2+\tfrac{1}{2}\eps \alpha|u_{t}|^2 \,dxdt\\
&\quad +\int_{0}^{t}\!\int_{\RR}\eps\alpha |u_{t}+\D F(u)u_{x}|^2+| \D{\eta}(u)G(u)| \, dxdt\\
&\quad\leq \int_{\RR} \Big(\eta(u_0+\eps
{u_0}_{t})+c\SP\eps^2\alpha|{u_0}_t|^2+\eps^2 {u_0}_x^{\top} A {u_0}_x+2\eps \alpha R(u_0)\Big)\, dx
\end{aligned}
\end{equation}
with $\frac{1}{2} \leq c \leq 1$. From \eqref{RXENTPROP} it follows that
\begin{equation}\label{PHIEQUIVENT}
c_{1} \SP \varphi(t) \, < \int_{\RR} \Big(\eta(u+\eps
u_{t})+\tfrac{1}{2}\alpha \eps^{2}|u_{t}|^2+\eps^2 \alpha u_{x}^{\top} A \SP u_{x} \Big)\SP dx <  c_2 \SP \varphi(t)
\end{equation}
for some $c_1,c_2$ that depend on $\alpha,\beta$ and $A$. Then, combining \eqref{STABIDSCENTWDSOURCE} and \eqref{PHIEQUIVENT}, we obtain \eqref{PHIESTENTW1D}.

\end{proof}

\begin{proposition}[\bf General source]\label{Lgen-source-2}
Let $\{u^{\eps}(x,t)\}$ be a family of smooth solutions to the equation \eqref{RXBLAW1D} on $\RR \times [0,T]$. Suppose that  $u \equiv u^{\eps}$ decays fast at infinity and that:

\begin{itemize}
\item[(a1)]  \eqref{RXENTPROP} holds true, and the positive definite, symmetric matrix $A$, is such that the subcharacteristic condition \eqref{AHYPSCENT1D} is valid.
\item[(a2)] The condition \eqref{LISRC} for the source $G$ holds true.
 \end{itemize}

\par\smallskip

\noindent Then,
\begin{equation}\label{PHIESTSCENTG1D}\vspace{4pt}
  \varphi(t)  \SP \leq \SP  C\SP \varphi(0) \,, \quad t\in[0,T]
\end{equation}
with $\varphi$ defined in \eqref{PHIDEF}
and $C=C(A,\alpha,\beta,T,L)>0$ independent of $\eps$.
\end{proposition}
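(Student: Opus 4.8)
The plan is to follow the proof of Proposition~\ref{w-diss2} almost verbatim, the only difference being that the two source contributions on the right-hand side of \eqref{STABIDSCENT1D} can no longer be discarded via dissipativity; instead they are retained as \emph{error terms} and absorbed through a Gronwall argument, which is precisely what forces the constant to depend on $T$ and $L$. First I would integrate the energy identity \eqref{STABIDSCENT1D} of Lemma~\ref{energy}, taken with $\bar{\alpha}=\alpha$, over $\RR\times[0,\tau]$ for $\tau\in[0,T]$. Because $u\equiv u^{\eps}$ decays fast at infinity, the two exact $x$-derivative terms integrate to zero, leaving the time-boundary energy at $\tau$ and at $0$, the three dissipation integrals, and the two source integrals.

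Second, I would check that every dissipation integrand on the left is nonnegative, so that it may be dropped for the purpose of an upper bound. The term $\eps\alpha|u_{t}+\D F(u)u_{x}|^2$ is a square; by \eqref{RXENTPROP} one has $\D^2\eta(u)\le\tfrac12\alpha{\bf I}$, whence $\alpha{\bf I}-\D^2\eta(u)\ge\tfrac12\alpha{\bf I}\ge0$ and the second term is nonnegative; finally, since the quadratic form only sees the symmetric part, $u_x^{\top}\D^2\eta(u)A\,u_x=\tfrac12 u_x^{\top}\bigl(A\D^2\eta(u)+\D^2\eta(u)A\bigr)u_x$ using $A^{\top}=A$ and $(\D^2\eta)^{\top}=\D^2\eta$, so the subcharacteristic condition \eqref{AHYPSCENT1D} gives $u_x^{\top}\bigl(\D^2\eta(u)A-\alpha\D F(u)^{\top}\D F(u)\bigr)u_x\ge\nu|u_x|^2\ge0$. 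Combined with \eqref{ALPHADOM}, which renders the fourth time-boundary term nonnegative and bounded by $\tfrac12\alpha\eps^2|u_t|^2$, I obtain the energy equivalence \eqref{PHIEQUIVENT}, and hence
\[
c_1\,\varphi(\tau)\ \le\ c_2\,\varphi(0)+\int_0^{\tau}\!\!\int_{\RR}\Bigl(\D\eta(u)G(u)+2\eps\alpha\,u_t^{\top} G(u)\Bigr)\,dx\,dt,\qquad\tau\in[0,T].
\]

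Third, I would estimate the source integrals. Assuming, as is standard for such a priori estimates, that the family $\{u^{\eps}\}$ takes values in a common compact set $\mathcal{A}$, hypothesis \eqref{LISRC} together with $G(0)=0$ yields $|G(u)|\le L|u|$ with $L=L_{\mathcal{A}}$; moreover $\D\eta(0)=0$ and $\D^2\eta\le\tfrac12\alpha{\bf I}$ give $|\D\eta(u)|=\bigl|\int_0^1\D^2\eta(su)u\,ds\bigr|\le\tfrac12\alpha|u|$. Hence $|\D\eta(u)G(u)|\le\tfrac12\alpha L|u|^2$ and, by Young's inequality, $|2\eps\alpha\,u_t^{\top} G(u)|\le\eps^2|u_t|^2+\alpha^2L^2|u|^2$, so the two source integrals are bounded by $C\varphi(t)$ with $C=C(\alpha,L)$. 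Inserting this into the displayed inequality gives $\varphi(\tau)\le c\bigl(\varphi(0)+\int_0^{\tau}\varphi(t)\,dt\bigr)$, and the Gronwall lemma then produces \eqref{PHIESTSCENTG1D} with $C=C(A,\alpha,\beta,T,L)$.

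The one place that needs genuine care, and the only step that differs substantively from Proposition~\ref{w-diss2}, is the control of $\D\eta(u)G(u)$: in the weakly dissipative case this term has a favorable sign and is kept on the left (cf.\ \eqref{WDSRC}), whereas here it must be bounded through the interplay of the entropy bound $|\D\eta(u)|\le\tfrac12\alpha|u|$ and the Lipschitz bound $|G(u)|\le L|u|$. This is exactly what produces the exponential-in-$T$ constant and prevents the estimate from being uniform in time.
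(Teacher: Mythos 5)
Your argument follows the paper's proof step for step --- integrate \eqref{STABIDSCENT1D} with $\bar{\alpha}=\alpha$, use \eqref{RXENTPROP}, \eqref{AHYPSCENT1D}, \eqref{ALPHADOM}, \eqref{PHIEQUIVENT} to reach the inequality \eqref{TSTESTSCENTGS1}, bound the source integrand quadratically, and close with Gronwall --- but there is one genuine flaw in your third step. To obtain $|G(u)|\le L|u|$ you \emph{assume} that the family $\{u^{\eps}\}$ takes values in a common compact set $\mathcal{A}$. This hypothesis appears nowhere in the proposition, and it cannot be added harmlessly: the entire point of working in the energy framework (cf.\ Section \ref{S4}, where the absence of uniform $L^{\infty}$ bounds is emphasized) is that no confinement of the range of $u^{\eps}$ is available a priori, and assuming it would reduce the proposition to a much weaker statement. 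As written, your proof does not establish the proposition as stated.

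The gap comes from a misreading of \eqref{LISRC}, and the repair is immediate. The hypothesis is asymmetric in its two arguments: the bound $|G(u)-G(\bar{u})|\le L_{\mathcal{A}}|u-\bar{u}|$ is required to hold for \emph{all} $u\in\RR^n$, with only $\bar{u}$ constrained to lie in the compact set $\mathcal{A}$. Taking $\mathcal{A}=\{0\}$ and $\bar{u}=0$, and recalling the standing assumption $G(0)=0$, yields the global linear-growth bound $|G(u)|\le L|u|$ for every $u\in\RR^n$ with $L=L_{\{0\}}$ --- no restriction on the range of $u^{\eps}$ whatsoever. This is exactly how the paper invokes \eqref{LISRC}. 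With this correction the rest of your argument goes through verbatim: your bounds $|\D\eta(u)|\le\tfrac{1}{2}\alpha|u|$, $|\D\eta(u)G(u)|\le\tfrac{1}{2}\alpha L|u|^2$ and the Young-inequality estimate of $2\eps\alpha\,u_t^{\top}G(u)$ match (up to constants) the paper's inequality following \eqref{TSTESTSCENTGS1}, and the Gronwall lemma delivers \eqref{PHIESTSCENTG1D} with $C=C(A,\alpha,\beta,T,L)$ as claimed.
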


\begin{proof}
Integrating the energy identity \eqref{STABIDSCENT1D}, with $\bar{\alpha}=\alpha$, and using \eqref{RXENTPROP}, \eqref{AHYPSCENT1D} together with relations \eqref{ALPHADOM}, \eqref{PHIEQUIVENT} we obtain for $\tau \in [0,T]$
\begin{equation}\label{TSTESTSCENTGS1}
\begin{aligned}
c_1 \varphi(\tau) \leq c_2 \SP \varphi(0) + \int^{\tau}_{0}\!\int_{\RR}\Big(\D \eta (u)G(u)+2 \eps \alpha u_t^{\top}G(u)\Bigr) dx  dt\,.
\end{aligned}
\end{equation}
Since $G(0)=0$, \eqref{LISRC} implies $|G(u)|\leq L|u|$ and therefore, in view of \eqref{RXENTPROP},
\begin{equation*}
|D\eta(u)G(u)+2\eps \alpha u_t^{\top}G(u)|\leq\alpha L|u|^2 +     \alpha\eps^2|u_{t}|^2+\alpha L^2|u|^2\,.
\end{equation*}
Then \eqref{PHIDEF}
and \eqref{TSTESTSCENTGS1} imply
\begin{equation*}
\begin{aligned}
 \varphi(\tau) \leq c \Big( \SP \varphi(0) + \int^{\tau}_{0} \varphi(t) \SP dt \Big)
\end{aligned}
\end{equation*}
with $c>0$ depending on $c_1,c_2$ and $L$. Then, we conclude \eqref{PHIESTSCENTG1D} via the Gronwall
lemma.
\end{proof}

\section{Compactness properties}\label{S4}
This section focuses on systems of hyperbolic balance laws with a strictly convex entropy, $d=1$.
\subsection{ Systems with a strictly convex entropy, $d=1$.}
Starting with a family  $\{u_\eps \}$  smooth solutions of \eqref{RXBLAW1D} on $\RR  \times [0, \infty)$ the goal in this section  is to control the dissipation measure and to show that
\begin{equation*}
\del_t \bar{\eta}(u^{\eps}) +  \del_x \bar{q}(u^{\eps})\,\, \mbox{lies in a compact set of }\,\, H^{-1}_{loc}(\RR \times \RR^+)
\end{equation*}
for a certain class of entropy-entropy flux pairs $(\bar{\eta}, \bar{q})$.
In the proof we use Murat's lemma \cite{Murat-1981}.
\begin{lemma}[\bf Murat's Lemma \cite{Murat-1981}]\label{FM}
 Let ${\mathcal O}$ be an open subset of $\RR^m$ and $\{\phi_j\}$ a bounded
sequence of $W^{-1,p} ({\mathcal O})$  for some $p > 2.$ In addition let ${\phi}_j = \chi_j  + \psi_j$, where $\{\chi_j\}$
belongs in a compact set of $H^{-1}({\mathcal O})$ and $\{\psi_j\}$ belongs in a bounded set of the space of measures $M({\mathcal O})$. Then $\{\phi_j\}$ belongs in a compact set of $H^{-1}({\mathcal O}).$
\end{lemma}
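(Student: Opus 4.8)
The plan is to exploit the whole scale of negative Sobolev spaces $W^{-1,s}(\mathcal{O})$ and to realize $H^{-1}=W^{-1,2}$ as a space sitting strictly between one in which $\{\phi_j\}$ is \emph{precompact} and the space $W^{-1,p}$ in which it is merely \emph{bounded}, so that an interpolation inequality upgrades boundedness to strong convergence. The functional-analytic tools below (Morrey embedding, duality) are cleanest on a bounded set, and since the negative spaces are defined by duality with the zero-trace spaces $W^{1,s}_0$ they need no boundary regularity; I would therefore take $\mathcal{O}$ bounded, the unbounded or local case following by a standard exhaustion. On such a set one has the continuous inclusions $W^{-1,p}(\mathcal{O})\subset H^{-1}(\mathcal{O})\subset W^{-1,q}(\mathcal{O})$ whenever $1<q<2<p$.

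The first and central input is a compactness property of measures. For $1<q<m/(m-1)$, letting $q'>m$ be the conjugate exponent, the embedding $W^{1,q'}_{0}(\mathcal{O})\hookrightarrow C(\overline{\mathcal{O}})$ is compact by Morrey's inequality together with Arzela--Ascoli. By Schauder's theorem on adjoints of compact operators, the dual embedding $M(\mathcal{O})=C(\overline{\mathcal{O}})^{*}\hookrightarrow \bigl(W^{1,q'}_{0}(\mathcal{O})\bigr)^{*}=W^{-1,q}(\mathcal{O})$ is then also compact. Crucially $m/(m-1)\le 2$ for $m\ge 2$ (and the constraint is vacuous for $m=1$), so one may fix $q$ with $1<q<\min\{2,\,m/(m-1)\}$; in particular $q<2$. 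Hence $\{\psi_j\}$, bounded in $M(\mathcal{O})$, is precompact in $W^{-1,q}(\mathcal{O})$. As $\{\chi_j\}$ is precompact in $H^{-1}(\mathcal{O})\hookrightarrow W^{-1,q}(\mathcal{O})$, it is precompact in $W^{-1,q}(\mathcal{O})$ as well, and therefore so is the sum $\{\phi_j\}=\{\chi_j+\psi_j\}$.

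The second input is an interpolation inequality on the negative scale: for $1<q<2<p$ there is $\theta\in(0,1)$ with $\tfrac12=\tfrac{1-\theta}{q}+\tfrac{\theta}{p}$ and a constant $C$ such that $\|g\|_{H^{-1}}\le C\,\|g\|_{W^{-1,q}}^{1-\theta}\,\|g\|_{W^{-1,p}}^{\theta}$ (the standard complex-interpolation identity $[W^{-1,q},W^{-1,p}]_{\theta}=W^{-1,2}$, obtained by dualizing the corresponding statement for the spaces $W^{1,s}_0$). To finish, take any subsequence of $\{\phi_j\}$; by the previous paragraph a further subsequence converges strongly in $W^{-1,q}(\mathcal{O})$ to some $\phi$, while by reflexivity (here $p>2>1$) and the $W^{-1,p}$-boundedness a further subsequence converges weakly in $W^{-1,p}(\mathcal{O})$, so that $\{\phi_j-\phi\}$ remains bounded there. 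Applying the interpolation inequality to $g=\phi_j-\phi$ forces $\phi_j\to\phi$ strongly in $H^{-1}(\mathcal{O})$. Since every subsequence admits a sub-subsequence converging in $H^{-1}(\mathcal{O})$, the family $\{\phi_j\}$ is precompact there, which is the claim.

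I expect the main obstacle to be the measure-to-$W^{-1,q}$ compact embedding and its interaction with the exponent arithmetic. The embedding holds only for $q<m/(m-1)$, and it is precisely the inequality $m/(m-1)\le 2$ (for $m\ge2$) that guarantees an admissible $q<2$; this must in turn be paired with the hypothesis $p>2$ so that $H^{-1}$ lies strictly inside the interpolation segment with endpoints $q$ and $p$. Making these two constraints dovetail -- together with the duality/Schauder step that turns the Morrey compactness into compactness of the measure embedding -- is where the genuine content of the lemma lies; the interpolation inequality and the final subsequence extraction are then routine.
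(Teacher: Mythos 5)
The paper does not actually prove this lemma: it is quoted as a known result with a citation to Murat \cite{Murat-1981}, so there is no in-paper argument to compare against. Your proof is the classical one behind that citation, and it is correct: the compact embedding $M(\mathcal{O}) \hookrightarrow W^{-1,q}(\mathcal{O})$ for $1<q<m/(m-1)$ (Morrey embedding $W^{1,q'}_0 \hookrightarrow C(\overline{\mathcal{O}})$ plus Schauder's theorem on adjoints) upgrades the measure part to precompactness in $W^{-1,q}$, and the interpolation inequality $\|g\|_{H^{-1}} \le C\|g\|_{W^{-1,q}}^{1-\theta}\|g\|_{W^{-1,p}}^{\theta}$ together with the $W^{-1,p}$ bound converts this into precompactness in $H^{-1}$; your exponent bookkeeping ($q<\min\{2,m/(m-1)\}$ paired with $p>2$) and the subsequence extraction are both sound. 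One point worth emphasizing: your restriction to bounded $\mathcal{O}$ is not merely a technical convenience but necessary, since for unbounded $\mathcal{O}$ the statement as literally written is false (translates of a fixed nonzero $L^1\cap C^\infty_c$ bump are bounded in $M(\RR^m)$ and in $W^{-1,p}$, with $\chi_j=0$, yet admit no $H^{-1}$-convergent subsequence); the exhaustion you mention yields compactness in $H^{-1}_{loc}$, which is precisely the form in which the lemma is invoked in Section \ref{S4}, so this restriction costs nothing in the paper's applications. The only step taken on faith is the complex-interpolation identity for the $W^{-1,s}$ scale on an arbitrary bounded open set, which is standard but whose duality-based proof deserves a reference (e.g.\ to Bergh--L\"ofstr\"om) rather than a parenthetical.
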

In the presence of uniform $L^{\infty}$-bounds the compensated compactness framework (cf. Tartar \cite{Tartar-1979}, DiPerna \cite{DiPerna83}) guarantees compactness of approximate solutions and implies that, along a subsequence, $u_{\eps} \to u\,\, a.e.\, (x,t).$  In the absence of  $L^{\infty}$-bounds, convergence
of viscosity approximations in the literature has been established in the context of elastodynamics by Shearer \cite{Sh} and Shearer and Serre \cite{SSh}. The objective in that context is to establish the reduction of the generalized Young measure to a point mass and to show strong convergence.

\begin{theorem}[{\bf Weakly dissipative source}] \label{COMP-WD-1D}
Let $\{u_\eps \}$ be a family of smooth solutions of \eqref{RXBLAW1D}  on $\RR  \times [0, T]$ emanating from smooth initial data. The family $\{u_\eps\}$ is assumed to decay fast at infinity. Let the hypotheses of Proposition \ref{w-diss2} remain valid so that the stability estimate \eqref{PHIESTENTW1D}  holds true with  $\eta-q$ entropy-entropy flux pair satisfying \eqref{RXENTPROP}, and $A$ a symmetric, positive-definite matrix subject to \eqref{AHYPSCENT1D}.
Then, for entropy pairs $(\bar{\eta}, \bar{q})$ satisfying
\begin{equation}\|\bar{\eta}\|_{L^{\infty}}, \|\bar{q}\|_{L^{\infty}}, \|\D\bar{\eta}\|_{L^{\infty}}, \|\D^2\bar{\eta}\|_{L^{\infty}} \le C \label{entropy-hyp}
\end{equation}
and
\begin{equation}\label{proj-growth}
  |\D \bar{\eta}(v) G(v)| \leq C\big(M-\D \eta(v) G(v)\big), \quad \forall v\in\RR^n\,,
\end{equation}
the family
\begin{equation}\label{measure-cc}
\big\{\del_t \bar{\eta}(u^{\eps}) +  \del_x \bar{q}(u^{\eps})\big\}_{\eps}\,\, \mbox{lies in a compact set of }\,\, H^{-1}_{loc}(\RR \times [0,T]).
\end{equation}
\end{theorem}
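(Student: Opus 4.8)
The plan is to apply Murat's Lemma (Lemma \ref{FM}) on an arbitrary bounded open set $\mathcal{O}\subset\subset\RR\times[0,T]$: I would write the entropy production $\del_t\bar{\eta}(u^{\eps})+\del_x\bar{q}(u^{\eps})$ as a sum of a part compact in $H^{-1}(\mathcal{O})$ and a part bounded in the measures $M(\mathcal{O})$, and then verify the uniform $W^{-1,p}(\mathcal{O})$ bound with $p>2$ required by the lemma. The starting point is the algebraic identity produced by combining the entropy relation $\D\bar{q}=\D\bar{\eta}\,\D F$ with the relaxation equation \eqref{RXBLAW1D}. Writing $u\equiv u^{\eps}$ and using $\del_t\bar{\eta}(u)=\D\bar{\eta}(u)u_t$ together with $\del_x\bar{q}(u)=\D\bar{\eta}(u)\D F(u)u_x=\D\bar{\eta}(u)\,\del_x F(u)$, substitution of $u_t+\del_x F(u)=G(u)+\eps(Au_{xx}-u_{tt})$ and integration by parts of the two $\eps$-terms yields
\begin{equation*}
\begin{aligned}
\del_t\bar{\eta}(u)+\del_x\bar{q}(u) &= \D\bar{\eta}(u)G(u)+\eps\,\del_x\bigl(\D\bar{\eta}(u)Au_x\bigr)-\eps\,\del_t\bigl(\D\bar{\eta}(u)u_t\bigr)\\
&\quad-\eps\,u_x^{\top}\D^2\bar{\eta}(u)Au_x+\eps\,u_t^{\top}\D^2\bar{\eta}(u)u_t.
\end{aligned}
\end{equation*}

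The four non-source terms split cleanly using the uniform bounds of Proposition \ref{w-diss2}. The two divergence terms $\eps\,\del_x(\D\bar{\eta}(u)Au_x)$ and $-\eps\,\del_t(\D\bar{\eta}(u)u_t)$ are spatial/temporal derivatives of quantities that tend to $0$ in $L^2(\mathcal{O})$: since $\|\eps u_x\|_{L^2}=\sqrt{\eps}\,\|\sqrt{\eps}\,u_x\|_{L^2}$ and likewise for $u_t$, the space-time dissipation bound $\eps\int_0^T\!\int_\RR(|u_x|^2+|u_t|^2)\le C$ contained in \eqref{STABIDSCENTWDSOURCE}, together with the $L^{\infty}$-bound on $\D\bar{\eta}$ from \eqref{entropy-hyp}, forces $\eps\,\D\bar{\eta}(u)Au_x\to0$ and $\eps\,\D\bar{\eta}(u)u_t\to0$ in $L^2(\mathcal{O})$; hence these terms converge to $0$ in $H^{-1}(\mathcal{O})$ and constitute the compact part $\chi_\eps$. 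The two quadratic terms $-\eps\,u_x^{\top}\D^2\bar{\eta}(u)Au_x$ and $\eps\,u_t^{\top}\D^2\bar{\eta}(u)u_t$ are controlled in absolute value by $C\eps(|u_x|^2+|u_t|^2)$ since $|\D^2\bar{\eta}|$ is bounded, and the same dissipation bound shows they are bounded in $L^1(\mathcal{O})\subset M(\mathcal{O})$.

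The genuinely new contribution, and the main obstacle, is the source term $\D\bar{\eta}(u)G(u)$: because $G$ satisfies no growth condition, it cannot be estimated directly, and this is exactly what the hypothesis \eqref{proj-growth} is designed to circumvent. Evaluating weak dissipation \eqref{WDSRC} at $\bar{u}=0$ (using $G(0)=0$ and $\D\eta(0)=0$) gives $\D\eta(u)G(u)\le0$, so $-\D\eta(u)G(u)=|\D\eta(u)G(u)|$, and \eqref{proj-growth} then reads $|\D\bar{\eta}(u)G(u)|\le C\bigl(M+|\D\eta(u)G(u)|\bigr)$. Integrating over the bounded set $\mathcal{O}$ and invoking the uniform control of $\int_0^T\!\int_\RR|\D\eta(u)G(u)|$ furnished by \eqref{PHIESTENTW1D} shows $\D\bar{\eta}(u)G(u)$ is bounded in $L^1(\mathcal{O})\subset M(\mathcal{O})$. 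Grouping the source term with the two quadratic terms as the measure part $\psi_\eps$, we obtain $\del_t\bar{\eta}(u^{\eps})+\del_x\bar{q}(u^{\eps})=\chi_\eps+\psi_\eps$ with $\{\chi_\eps\}$ compact in $H^{-1}(\mathcal{O})$ and $\{\psi_\eps\}$ bounded in $M(\mathcal{O})$. Finally, since $\bar{\eta}(u^{\eps})$ and $\bar{q}(u^{\eps})$ are uniformly bounded in $L^{\infty}(\mathcal{O})\hookrightarrow L^{p}(\mathcal{O})$ for every finite $p$ by \eqref{entropy-hyp}, their distributional derivatives, and hence the full sequence, are bounded in $W^{-1,p}(\mathcal{O})$ for some $p>2$. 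Murat's Lemma then gives compactness in $H^{-1}(\mathcal{O})$, and as $\mathcal{O}$ was arbitrary this establishes \eqref{measure-cc}.
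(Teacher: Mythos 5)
Your proof is correct and follows essentially the same route as the paper: the identical five-term decomposition of $\del_t\bar{\eta}(u^{\eps})+\del_x\bar{q}(u^{\eps})$, with the divergence terms forming the $H^{-1}$-compact part, the quadratic and source terms bounded in $L^1$ (the latter via \eqref{proj-growth} combined with the $L^1$ control of $\D\eta\,G$ from \eqref{PHIESTENTW1D}), and Murat's lemma closing the argument. You in fact supply details the paper leaves implicit, namely the strong $L^2$-convergence to zero of the fluxes $\eps\D\bar{\eta}(u)Au_x$ and $\eps\D\bar{\eta}(u)u_t$ via the space-time dissipation bound contained in \eqref{STABIDSCENTWDSOURCE}, the sign observation $\D\eta(u)G(u)\le 0$ coming from \eqref{WDSRC}, and the uniform $W^{-1,p}$ bound ($p>2$) required to invoke Murat's lemma.
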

\begin{proof}
Let $\{u^\eps \}$ be a family of smooth solutions of \eqref{RXBLAW1D} on $\RR  \times [0, T].$
The goal is to control the dissipation measure  and to establish  \eqref{measure-cc} for a class of entropy-entropy flux pairs $(\eta, q)$. It suffices to establish \eqref{measure-cc} for entropy-entropy flux pairs $(\bar{\eta},\bar{q})$ satisfying \eqref{entropy-hyp}.
This class of entropy pairs has been  used in the literature in a different context in order to establish the reduction of the generalized Young measure to a point mass and to show strong convergence.

Starting from \eqref{RXBLAW1D} we obtain
\begin{equation}
\begin{aligned}
&\del_t \bar{\eta}(u^{\eps}) + \del_x \bar{q}(u^{\eps}) = \eps \del_x \left( \D\bar{\eta}(u^{\eps}) A u^{\eps}_x \right) - \eps \del_t \left( \D\bar{\eta}(u^{\eps}) u^{\eps}_t\right) \\[3pt]
&\qquad\qquad\quad\qquad\qquad-\eps {u^{\eps}_x} ^{\top} \D^2\bar{\eta}(u^{\eps}) A u^{\eps}_x + \eps {u^{\eps}_t} ^{\top} \D^2 \bar{\eta}(u^{\eps}) u^{\eps}_t + \D\bar{\eta}(u^{\eps}) G(u^{\eps})\\[3pt]
&\qquad\qquad\qquad:= I_1 + I_2+ I_3 + I_4 + I_5. \nonumber
\end{aligned}
\end{equation}
From \eqref{PHIESTENTW1D} and \eqref{entropy-hyp}, the terms $I_1, I_2$ lie in  compact set of $H^{-1}$, and the terms $I_3, I_4$ are bounded in $L^1$. By \eqref{PHIESTENTW1D} $\D \eta G(u^{\eps})$ is bounded in $L_1$ by initial data (due to the weak dissipation assumption) and therefore by \eqref{proj-growth} the term $\D \bar{\eta}G(u^{\eps})$ is in a bounded set of $L_1$ as well. Therefore, by Murat's lemma \cite{Murat-1981}, the sum $\sum I_i$ lies in a bounded set of $W^{-1, \infty}.$
\end{proof}

Following, similar line of argument an analogous result for systems of hyperbolic balance laws with general source term is established.

\begin{theorem}[{\bf General source}] \label{COMP-LIPS-1D}
Let $\{u_\eps \}$ be a family of smooth solutions of \eqref{RXBLAW1D} on $\RR  \times [0, T]$ emanating from smooth initial data. The family $\{u_\eps\}$ is assumed to decay fast at infinity. Let the hypothesis of Proposition  \ref{Lgen-source-2} remain valid so that the stability estimate \eqref{PHIESTSCENTG1D} holds true with $\eta-q$ entropy-entropy flux pair satisfying \eqref{RXENTPROP}, and $A$ a symmetric, positive-definite matrix subject to \eqref{AHYPSCENT1D}.
Then, for entropy pairs $(\bar{\eta}, \bar{q})$ satisfying
\begin{equation}\|\bar{\eta}\|_{L^{\infty}}, \|\bar{q}\|_{L^{\infty}}, \|\D\bar{\eta}\|_{L^{\infty}}, \|\D^2\bar{\eta}\|_{L^{\infty}} \le C \label{entropy-hyp2}
\end{equation}
the family
\begin{equation}\label{measure-cc2}
\Big\{\del_t \bar{\eta}(u_{\eps}) +  \del_x \bar{q}(u_{\eps})\Big\}_{\eps}\,\, \mbox{lies in a compact set of }\,\, H^{-1}_{loc}(\RR \times [0,T]).
\end{equation}
\end{theorem}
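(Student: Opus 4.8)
The plan is to follow verbatim the structure of the proof of Theorem \ref{COMP-WD-1D}, the only genuinely new point being the treatment of the source contribution, for which the growth condition \eqref{proj-growth} is no longer available. First I would multiply the second-order relaxation equation \eqref{RXBLAW1D} by $\D\bar{\eta}(u^{\eps})$; using the entropy compatibility relation $\D\bar{q}=\D\bar{\eta}\,\D F$ to identify $\del_t\bar{\eta}(u^{\eps})+\del_x\bar{q}(u^{\eps})$ on the left, and rewriting the wave-operator contributions through $\del_x(\D\bar{\eta}\,Au_x)=\D\bar{\eta}\,Au_{xx}+u_x^{\top}\D^2\bar{\eta}\,Au_x$ and $\del_t(\D\bar{\eta}\,u_t)=\D\bar{\eta}\,u_{tt}+u_t^{\top}\D^2\bar{\eta}\,u_t$, I obtain the same five-term decomposition
\begin{equation*}
\del_t\bar{\eta}(u^{\eps})+\del_x\bar{q}(u^{\eps})=I_1+I_2+I_3+I_4+I_5
\end{equation*}
with $I_1=\eps\del_x(\D\bar{\eta}\,Au_x)$, $I_2=-\eps\del_t(\D\bar{\eta}\,u_t)$, $I_3=-\eps u_x^{\top}\D^2\bar{\eta}\,Au_x$, $I_4=\eps u_t^{\top}\D^2\bar{\eta}\,u_t$, and $I_5=\D\bar{\eta}(u)G(u)$. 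This algebraic step is independent of the nature of the source.

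Before estimating the $I_k$ I would upgrade the stability information. The bound \eqref{PHIESTSCENTG1D} only records $\varphi(t)\le C\varphi(0)$, whereas the argument for $I_1,I_2$ needs the parabolic-type dissipation $\sqrt{\eps}\,u_x,\ \sqrt{\eps}\,u_t\in L^2(\RR\times[0,T])$. I would recover this by integrating the energy identity \eqref{STABIDSCENT1D} with $\bar{\alpha}=\alpha$ and, instead of discarding them as in Proposition \ref{Lgen-source-2}, retaining the nonnegative dissipation terms: by symmetrization and \eqref{AHYPSCENT1D} one has $u_x^{\top}(\D^2\eta\,A-\alpha\,\D F^{\top}\D F)u_x\ge\nu|u_x|^2$, while \eqref{RXENTPROP} gives $u_t^{\top}(\alpha I-\D^2\eta)u_t\ge\tfrac{\alpha}{2}|u_t|^2$; on the right-hand side the source integral is controlled by $|G(u)|\le L|u|$ together with the already-established $\varphi(t)\le C\varphi(0)$. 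This yields $\int_0^T\!\!\int_\RR\eps\big(|u_x|^2+|u_t|^2\big)\,dx\,dt\le C$ uniformly in $\eps$.

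With these bounds in hand the terms are handled as before. Writing $I_1=\sqrt{\eps}\,\del_x\!\big(\sqrt{\eps}\,\D\bar{\eta}\,Au_x\big)$, the inner function is bounded in $L^2$ (using $\|\D\bar{\eta}\|_{L^\infty}\le C$ from \eqref{entropy-hyp2} and $\sqrt{\eps}\,u_x\in L^2$), so its $x$-derivative is bounded in $H^{-1}$ and the prefactor $\sqrt{\eps}\to0$ forces $I_1\to0$ strongly in $H^{-1}$; hence $\{I_1\}$ lies in a compact set of $H^{-1}_{loc}$, and $I_2$ is identical. Using $\|\D^2\bar{\eta}\|_{L^\infty}\le C$ and the dissipation estimate, $I_3,I_4$ are bounded in $L^1$, hence in a bounded set of $M_{loc}$. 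For $I_5=\D\bar{\eta}(u)G(u)$ — the only place the general-source hypothesis enters — I would replace the use of \eqref{proj-growth} by the Lipschitz bound $|G(u)|\le L|u|$ (valid since $G(0)=0$) and $\|\D\bar{\eta}\|_{L^\infty}\le C$, giving $|I_5|\le CL|u|$; since $u\in L^\infty([0,T];L^2(\RR))$ by the $\varphi$-bound, $I_5$ is bounded in $L^1_{loc}$ on every bounded set and therefore in $M_{loc}$.

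Finally I would invoke Murat's Lemma \ref{FM}. Since $\bar{\eta}(u^{\eps}),\bar{q}(u^{\eps})$ are uniformly bounded in $L^\infty$ by \eqref{entropy-hyp2}, the sum $\del_t\bar{\eta}(u^{\eps})+\del_x\bar{q}(u^{\eps})$ is bounded in $W^{-1,p}_{loc}$ for every $p<\infty$, in particular for some $p>2$; it decomposes as $(I_1+I_2)$, lying in a compact set of $H^{-1}_{loc}$, plus $(I_3+I_4+I_5)$, bounded in $M_{loc}$. Lemma \ref{FM} then delivers \eqref{measure-cc2}. The main obstacle I anticipate is the second paragraph: the clean dissipation estimate $\sqrt{\eps}\,u_x,\sqrt{\eps}\,u_t\in L^2$ is indispensable for the compactness of $I_1,I_2$, yet it is not recorded in the stated estimate \eqref{PHIESTSCENTG1D} and must be re-extracted from the energy identity, taking care that the source integral on the right is absorbed using only the Lipschitz bound and the a priori $\varphi$-control.
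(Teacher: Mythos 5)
Your proposal is correct and follows essentially the same route as the paper's proof: the identical five-term decomposition $I_1+\dots+I_5$ obtained by multiplying \eqref{RXBLAW1D} by $\D\bar{\eta}(u^{\eps})$, the same classification of $I_1,I_2$ as $H^{-1}$-compact and $I_3,I_4,I_5$ as measure-bounded, and Murat's Lemma \ref{FM} to conclude. Where you go beyond the paper's written proof is your second paragraph, and it fills a real gap: the paper cites only \eqref{PHIESTSCENTG1D} for the treatment of $I_1,\dots,I_4$, but $\varphi(t)\le C\varphi(0)$ by itself yields only $\eps u^{\eps}_x,\eps u^{\eps}_t$ bounded in $L^2(\RR\times[0,T])$, which makes $I_1,I_2$ merely \emph{bounded} in $H^{-1}$ (not compact) and gives no uniform $L^1$ control of $I_3,I_4$ whatsoever (one would get $O(\eps^{-1})$). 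The missing ingredient, $\int_0^T\!\!\int_{\RR}\eps\bigl(|u^{\eps}_x|^2+|u^{\eps}_t|^2\bigr)\,dx\,dt\le C$, is exactly the dissipation that is discarded when passing from the energy identity to the stated estimate in the proof of Proposition \ref{Lgen-source-2}; your re-derivation of it from \eqref{STABIDSCENT1D} with $\bar{\alpha}=\alpha$, keeping the nonnegative terms via \eqref{AHYPSCENT1D} and \eqref{RXENTPROP} and absorbing the source integral with $|G(u)|\le L|u|$ plus the already-established $\varphi$-bound, is correct and is what the paper's argument implicitly relies on. Two minor divergences: your bound $|I_5|\le CL|u|$, placed in $L^1_{loc}$ via the $L^\infty_tL^2_x$ control of $u^{\eps}$, versus the paper's $|I_5|\le c|u|^2$ in $L^1$ (both suffice); and your application of Murat's lemma is actually cleaner than the paper's, since you verify the $W^{-1,p}_{loc}$ boundedness hypothesis ($p>2$) from \eqref{entropy-hyp2} and state the conclusion as compactness in $H^{-1}_{loc}$, whereas the paper's closing sentence inverts hypothesis and conclusion.
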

\begin{proof}
The proof follows similar line of argument as the one in Theorem \ref{COMP-WD-1D}.
Starting from \eqref{RXBLAW1D} we obtain
\begin{equation}
\begin{aligned}
\del_t \bar{\eta}(u^{\eps}) + \del_x \bar{q}(u^{\eps}) &= \eps \del_x \left( \D\bar{\eta}(u^{\eps}) A u^{\eps}_x \right) - \eps \del_t \left( \D\bar{\eta}(u^{\eps}) u^{\eps}_t\right) \\[3pt]
&\qquad-\eps {u^{\eps}_x} ^{\top} \D^2\bar{\eta}(u^{\eps}) A u^{\eps}_x + \eps {u^{\eps}_t} ^{\top} \D^2 \bar{\eta}(u^{\eps}) u^{\eps}_t + \D\bar{\eta}(u^{\eps}) G(u^{\eps})\\[3pt]
&:= I_1 + I_2+ I_3 + I_4 + I_5. \nonumber
\end{aligned}
\end{equation}
From \eqref{PHIESTSCENTG1D} and \eqref{entropy-hyp2}, the terms $I_1, I_2$ lie in  compact set of $H^{-1},$ the terms $I_3, I_4$ are bounded in $L^1$, whereas
$$
|I_5| = |\D\bar{\eta}(u) ^{\top} G(u)| \leq c |u|^2
$$
is by Lemma \ref{Lgen-source-2} bounded in $L^1.$
Therefore, by Murat's lemma \cite{Murat-1981}, the sum $\sum I_i$ lies in a bounded set of $W^{-1, \infty}.$
\end{proof}

\section{Error estimates via the relative entropy method, $d=1$}\label{S5}

{ \black In this section, we establish the convergence of solutions of \eqref{RXBLAW1D} to solutions of \eqref{BLAW1D}  before the formation of shocks. In the spirit of \cite{AMT-2005} we use the modified relative entropy method \cite{Dafermos10} by introducing a functional $H^{rel}(\bar{u},u^{\eps}),$ which  monitors the difference between the solutions $\bar{u}$ to the equilibrium and the solutions $u^{\eps}$ to the relaxation systems. The presence of the source $G$ in our work, however, requires us to modify the method significantly. More specifically, in order to treat the  weakly dissipative source $G$, which  satifies  typically no growth restrictions, we need to introduce the relative potential $R^{rel}(\bar{u},u^{\eps})$ (see \eqref{RREL} in Section 6.3). This potential becomes  part of a Lyapunov functional monitoring the evolution of the difference between the two sources. In the case of the general source $G$ the terms associated with the source  are treated as error (cf.\ Section 6.4).}

\subsection{The decay functional and relative entropy identity.}

\label{S8.1}

Let $\eta-q$ be an entropy-entropy flux pair satisfying \eqref{RXENTPROP}. We define the corresponding relative entropy-entropy flux pair by
\begin{equation}\label{RENTPAIR}\vspace{3pt}
\begin{aligned}
H^{rel}(\bar{u},u^{\eps}) &= \eta \bigl( u^{\eps}+\eps(u^{\eps}-\bar{u})_t \bigr) - \eta(\bar{u}) - \D\eta(\bar{u})
\bigl( u^{\eps}+\eps(u^{\eps}-\bar{u})_t -\bar{u}
\bigr)\\[3pt]
Q^{rel}(\bar{u},u^{\eps}) &= q(u^{\eps}) - q(\bar{u}) - \D\eta(\bar{u})(F(u^{\eps})-F(\bar{u}))
\end{aligned}
\end{equation}
and set the functional
\begin{equation}\label{GFUNCTIONALDEF}
\begin{aligned}
\mathcal{G}(\bar{u},u^{\eps}) & = H_R(\bar{u},u^{\eps}) \\
&\quad + \eps^2  (u^{\eps}-\bar{u})_t ^{\top}\bigl(\alpha I - \overline{D^2 \eta}\bigr) (u^{\eps}-\bar{u})_t \\
 &\quad +\eps^2 \alpha \SP (u^{\eps}-\bar{u})_x^{\top}A(u^{\eps}-\bar{u})_x \,,
\end{aligned}
\end{equation}
where $A$ is a symmetric, positive definite matrix, $\alpha >0$ a fixed constant defined in \eqref{RXENTPROP} and
\begin{equation}\label{AVGD2ETA}
\overline{D^2 \eta} = \int_0^1 \int_0^s \, D^2 \eta \bigl(u^{\eps}+\eps \tau
(u^{\eps}-\bar{u})_t \bigr) \, d\tau ds\,.
\end{equation}

\par\smallskip

\begin{lemma}[{\bf Relative entropy identity}]\label{RENTIDENTITY}
Let $\bar{u}$, $u^{\eps}$ be smooth solutions to \eqref{BLAW1D}, \eqref{RXBLAW1D},
respectively. Then, the following energy
identity holds
\begin{equation}\label{LFI}
\begin{aligned}
&\del_t \DSP \mathcal{G} (\bar{u},u^{\eps}) + \del_x \SP Q^{rel} (\bar{u},u^{\eps}) + \eps \alpha \bigl|(u^{\eps}-\bar{u})_t+\D F(u^{\eps})(u^{\eps}-\bar{u})_x\bigr|^2\\
&\quad + \eps \Bigl\{ (u^{\eps}-\bar{u})_t^{\top}\Big(\alpha I - \D ^2\eta(u^{\eps})\Big) (u^{\eps}-\bar{u})_t   \Bigr\}\\
&\quad + \eps\Big\{(u^{\eps}-\bar{u})_x^{\top} \Big(\D ^2\eta(u^{\eps}) A - \alpha \SP \D F(u^{\eps})^{\top}\D F(u^{\eps}) \Big)(u^{\eps}-\bar{u})_x \Bigr\}\\
& = \del_x \Bigl \{ \eps \big(\D \eta(u^{\eps}) - \D \eta(\bar{u})\big) A(u^{\eps}-\bar{u})_x + 2\alpha \eps^2 \big(A(u^{\eps}-\bar{u})_x\big)^{\top} (u^{\eps}-\bar{u})_t \Bigr\}\\
&\quad - \big(\D ^2 \eta(\bar{u}) \bar{u}_x\big)^{\top} \Bigl( F(u^{\eps})-F(\bar{u})-\D F(\bar{u})(u^{\eps}-\bar{u}) \Bigr) \\
&\quad + \big(a_{1}+a_{2}+b_{1}+b_{2}+ 2 \SP \eps \alpha
(c_{1}+c_{2})\big)+ \big(d_1 + d_2 + 2 \SP \eps  \alpha  d_3 \big)\, ,
\end{aligned}
\end{equation}
where
\begin{equation}\label{ERRT1}\vspace{3pt}
\begin{aligned}
a_{1} &= \eps \big(\bigl( \D^2 \eta(u^{\eps})- \D^2 \eta(\bar{u}) \bigr) \bar{u}_t \big)^{\top} (u^{\eps}-\bar{u})_t \\
a_{2} &= -\eps \bigl( \D\eta(u^{\eps}) - \D\eta(\bar{u})\bigr) \bar{u}_{tt}\\
b_{1} &= \eps \big(\big( \D^2 \eta(u^{\eps}) - \D^2 \eta(\bar{u}) \bigr)\bar{u}_x\big)^{\top} A(u^{\eps}-\bar{u})_x \\
b_{2} &= -\eps \bigl( \D\eta(u^{\eps}) - \D\eta(\bar{u}) \bigr) A \bar{u}_{xx}\\
c_{1} &= \eps (A\bar{u}_{xx}- \bar{u}_{tt})^{\top} (u^{\eps}-\bar{u})_t\\
c_{2} &= - \big(\bigl(\D F(u^{\eps})-\D F(\bar{u}) \bigr)\bar{u}_x\big)^{\top} (u^{\eps}-\bar{u})_t
\end{aligned}
\end{equation}
are the error terms, and
\begin{equation}\label{ERRT2}
\begin{aligned}
d_1 &= \bigl( \D\eta(u^{\eps}) - \D\eta(\bar{u}) \bigr)(G(u^{\eps})-G(\bar{u}))\\
d_2 &= G(\bar{u})^{\top} \Big( \D\eta(u^{\eps})-\D\eta(\bar{u})-\D^2\eta(\bar{u})(u^{\eps}-\bar{u}) \Big) \\
d_3 &= \bigl( G(u^{\eps})-G(\bar{u}) \bigr)^{\top} (u^{\eps}-\bar{u})_t\,.
\end{aligned}
\end{equation}
are the terms associated with the source $G$.
\end{lemma}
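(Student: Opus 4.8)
\emph{Proposed approach.}
The plan is to establish \eqref{LFI} by reproducing, in \emph{relative} form, the computation behind the stability identity \eqref{STABIDSCENT1D} of Lemma \ref{energy}. Write $w:=u^{\eps}-\bar{u}$ and abbreviate $\eta(u^{\eps}\,|\,\bar{u}):=\eta(u^{\eps})-\eta(\bar{u})-\D\eta(\bar{u})(u^{\eps}-\bar{u})$ for the plain relative entropy. Subtracting \eqref{BLAW1D} from \eqref{RXBLAW1D} and splitting the wave operator as $\eps\bigl(Au^{\eps}_{xx}-u^{\eps}_{tt}\bigr)=\eps\bigl(Aw_{xx}-w_{tt}\bigr)+\eps\bigl(A\bar{u}_{xx}-\bar{u}_{tt}\bigr)$ gives the equation governing $w$,
\[
w_t+\del_x\bigl(F(u^{\eps})-F(\bar{u})\bigr)=\bigl(G(u^{\eps})-G(\bar{u})\bigr)+\eps\bigl(Aw_{xx}-w_{tt}\bigr)+\eps\bigl(A\bar{u}_{xx}-\bar{u}_{tt}\bigr),
\]
in which the last group -- the wave operator evaluated on the smooth equilibrium solution -- is the exact origin of the term $c_1$.

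First I would derive two building-block identities, the relative analogues of \eqref{TID1SCENT} and \eqref{TID2SCENT}. Contracting the $w$-equation with $w_t^{\top}$ gives the analogue of \eqref{TID1SCENT}: the time derivative $\del_t\bigl(\tfrac12\eps|w_t|^2+\tfrac12\eps\SP w_x^{\top}Aw_x\bigr)$, the terms $|w_t|^2+w_t^{\top}\D F(u^{\eps})w_x$, the flux $\eps\del_x(w_t^{\top}Aw_x)$, the source term $d_3$, the term $c_1$, and -- upon writing $\del_x(F(u^{\eps})-F(\bar{u}))=\D F(u^{\eps})w_x+\bigl(\D F(u^{\eps})-\D F(\bar{u})\bigr)\bar{u}_x$ -- the error term $c_2$. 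Contracting the $w$-equation with the relative variable $\D\eta(u^{\eps})-\D\eta(\bar{u})$ gives the analogue of \eqref{TID2SCENT}: using the entropy compatibility $\D q=\D\eta\,\D F$ one reassembles the relative-entropy evolution $\del_t\bigl[\eta(u^{\eps}\,|\,\bar{u})+\eps\bigl(\D\eta(u^{\eps})-\D\eta(\bar{u})\bigr)w_t\bigr]$ and the flux $\del_x Q^{rel}$, the two $\eps$-regularization bilinear forms in $\D^2\eta(u^{\eps})$, the boundary fluxes, the source terms $d_1$ and $d_2$, and -- after converting every derivative of $\bar{u}$ through the equilibrium relation $\bar{u}_t=-\D F(\bar{u})\bar{u}_x+G(\bar{u})$ together with its $t$- and $x$-differentiated versions -- the error terms $a_1,a_2,b_1,b_2$ and the relative-entropy flux error $-(\D^2\eta(\bar{u})\bar{u}_x)^{\top}\bigl(F(u^{\eps})-F(\bar{u})-\D F(\bar{u})w\bigr)$.

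Next I would combine the two identities with weights $2\alpha\eps$ and $1$, as in Lemma \ref{energy}, and invoke the relative Chapman--Enskog expansion -- the analogue of \eqref{CHEXPENT} obtained by Taylor expanding $\eta(u^{\eps}+\eps w_t)$ about $u^{\eps}$ -- to rewrite $H^{rel}$ (the leading term of $\mathcal{G}$ in \eqref{GFUNCTIONALDEF}) as
\[
H^{rel}=\eta(u^{\eps}\,|\,\bar{u})+\eps\bigl(\D\eta(u^{\eps})-\D\eta(\bar{u})\bigr)w_t+\eps^2\SP w_t^{\top}\,\overline{D^2\eta}\,w_t,
\]
with $\overline{D^2\eta}$ as in \eqref{AVGD2ETA}. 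The quadratic $\eps^2 w_t^{\top}\overline{D^2\eta}w_t$ then cancels the $-\eps^2 w_t^{\top}\overline{D^2\eta}w_t$ sitting in \eqref{GFUNCTIONALDEF}, so that $\mathcal{G}$ collapses to $\eta(u^{\eps}\,|\,\bar{u})+\eps\bigl(\D\eta(u^{\eps})-\D\eta(\bar{u})\bigr)w_t+\eps^2\alpha|w_t|^2+\eps^2\alpha\SP w_x^{\top}Aw_x$ -- exactly the time-derivative combination delivered by the second building block together with $2\alpha\eps$ times the first. Completing the square in the cross terms $w_t^{\top}\D F(u^{\eps})w_x$ produces $\eps\alpha|w_t+\D F(u^{\eps})w_x|^2$, and the surviving quadratic forms are precisely the dissipative ones on the left of \eqref{LFI}, rendered nonnegative by \eqref{AHYPSCENT1D} and \eqref{RXENTPROP}; the remaining terms sort into the two divergence fluxes and the error and source terms $a_i,b_i,c_i,d_i$ with the weights shown in \eqref{LFI}.

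I expect the main obstacle to be the bookkeeping of the $O(\eps)$ error terms $a_1,a_2,b_1,b_2,c_1,c_2$, each of which vanishes when $\bar{u}$ is constant and whose precise form emerges only after systematically eliminating the derivatives of $\bar{u}$ via the equilibrium equation. Two points are delicate. Matching the relative-entropy flux error requires the symmetry $\D^2\eta(\bar{u})\D F(\bar{u})=\D F(\bar{u})^{\top}\D^2\eta(\bar{u})$ -- the differentiated form of $\D q=\D\eta\,\D F$ -- so that the contribution $(\D F(\bar{u})\bar{u}_x)^{\top}\D^2\eta(\bar{u})w$ arising from $\bar{u}_t$ reorganizes into $(\D^2\eta(\bar{u})\bar{u}_x)^{\top}\D F(\bar{u})w$. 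Verifying that the source contributions collapse to exactly $d_1+d_2$ (from the second identity) and $2\eps\alpha\,d_3$ (from the first) rests on the scalar cancellations $\D\eta(u^{\eps})G(\bar{u})=G(\bar{u})^{\top}\D\eta(u^{\eps})^{\top}$ used in identifying $d_2$; no growth hypothesis on $G$ enters this purely algebraic step, such structure being needed only for the subsequent estimates.
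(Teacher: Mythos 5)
Your proposal is correct and takes essentially the same route as the paper: the paper builds the relative-entropy evolution \eqref{RENTID2} (via \eqref{LID1}, \eqref{LID2}, the symmetry of $\D^2\eta\,\D F$, and the Taylor expansion \eqref{CHEXPENT}) and then adds $2\alpha\eps$ times the $w_t$-contracted correction identity \eqref{CORRECTTERM} — precisely your two building blocks, with the same weights, the same completion of the square, and the same identification of the error and source terms. The only cosmetic differences are that you split the wave operator into its $w$- and $\bar{u}$-parts from the outset and contract the difference equation directly with $\D\eta(u^{\eps})-\D\eta(\bar{u})$, which coincides with the paper's combination \eqref{LID1}$-$\eqref{LID2} modulo the equilibrium equation.
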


\begin{proof}
By \eqref{RXENTPROP}, \eqref{BLAW1D}, and \eqref{RXBLAW1D} we have
\begin{equation}\vspace{3pt}\label{LID1}
\begin{aligned}
&\del_t \bigl( \eta(u^{\eps}) - \eta(\bar{u}) \bigr) + \del_x \bigl( q(u^{\eps}) - q(\bar{u})
\bigr) \\[3pt]
&\quad = \eps \bigl( D\eta(u^{\eps})Au^{\eps}_{xx} - D\eta(u^{\eps})u^{\eps}_{tt} \bigr) + D\eta(u^{\eps}) G(u^{\eps}) -
D\eta(\bar{u}) G(\bar{u})\,.
\end{aligned}
\end{equation}
Similarly,
$$
\del_t (u^{\eps}-\bar{u}) + \del_x (F(u^{\eps})-F(\bar{u})) = \eps (Au^{\eps}_{xx}-u^{\eps}_{tt})+ G(u^{\eps})-G(\bar{u})
$$
and hence, after multiplying the above identity by $\D\eta(\bar{u})$, we have
\begin{equation}\label{LID2}
\begin{aligned}
\del_t & \big( \D\eta(\bar{u}) (u^{\eps}-\bar{u}) \big) + \del_x \big( \D\eta(\bar{u}) \bigl(F(u^{\eps})-F(\bar{u}) \bigr)\big)\\
&= \big(\D^2\eta(\bar{u})\bar{u}_t\big)^{\top} (u^{\eps}-\bar{u}) + \del_x \big( \D \eta(\bar{u}) \big) (F(u^{\eps})-F(\bar{u})) \\[3pt]
&  \quad + \eps \big(\D\eta(\bar{u})Au^{\eps}_{xx}-\D\eta(\bar{u})u^{\eps}_{tt} \big) + \D\eta(\bar{u}) (G(u^{\eps})-G(\bar{u}))\,.
\end{aligned}
\end{equation}

\par\smallskip

The existence of the entropy-pair $\eta-q$ is equivalent to the property
\begin{equation*}
 \D^2 \eta(v) \D F(v) = \D F(v)^{\top} \D^2\eta(v), \quad \forall v\in\RR^n
\end{equation*}
and hence, using \eqref{BLAW1D},  the first term on the right-hand side of \eqref{LID2} can be expressed as
\begin{equation*}
\begin{aligned}
\big(\D^2 \eta(\bar{u})\bar{u}_t\big)^{\top} (u^{\eps}-\bar{u})
& = -\big(\D^2 \eta(\bar{u})\bar{u}_x\big)^{\top} \D F(\bar{u})(u^{\eps}-\bar{u}) + G(\bar{u})^{\top} \D^2\eta(\bar{u})(u^{\eps}-\bar{u})\,.
\end{aligned}
\end{equation*}
Combining \eqref{LID1}, \eqref{LID2} and the above identity we obtain
\begin{equation}\label{RENTID1} \vspace{3pt}
\begin{aligned}
&\!\!\!\!\!\!\!\!\!\del_t \big( \eta(u^{\eps}) - \eta(\bar{u}) - D\eta(\bar{u}) (u^{\eps}-\bar{u}) \big) \\
\qquad &\quad +\del_x \big( q(u^{\eps}) - q(\bar{u}) - \D\eta(\bar{u}) (F(u^{\eps}) - F(\bar{u}))
\big)\\
& \qquad\qquad = - \big(\D^2 \eta(\bar{u})\bar{u}_x \big)^{\top} \bigl( F(u^{\eps})- F(\bar{u}) - \D F(\bar{u})(u^{\eps}-\bar{u}) \big)\\
& \SP\qquad\quad\qquad + \eps \bigl( \D\eta(u^{\eps}) - \D\eta(\bar{u})\bigr) Au^{\eps}_{xx} - \eps \bigl( \D\eta(u^{\eps}) - \D\eta(\bar{u})\bigr) u^{\eps}_{tt}\\
& \SP\qquad\quad \qquad + \bigl( \D\eta(u^{\eps}) - \D\eta(\bar{u})\bigr) \bigl( G(u^{\eps}) - G(\bar{u}) \bigr) \\
& \SP\qquad \quad\qquad+  G(\bar{u})^{\top} \bigl( \D\eta(u^{\eps}) - \D\eta(\bar{u}) - \D^2\eta(\bar{u}) (u^{\eps}-\bar{u})\bigr).\\
\end{aligned}
\end{equation}
Next, we express the second and third terms on the right-hand side of \eqref{RENTID1} as
\begin{equation*}
\begin{aligned}
\bigl(  \D\eta(u^{\eps}) & -  \D\eta(\bar{u})\bigr)  u^{\eps}_{tt} & \\
& \, = \, \del_t \Big(\bigl(\D\eta(u^{\eps}) - \D\eta(\bar{u})\bigr) (u^{\eps}-\bar{u})_t \Big) - \big( \D^2 \eta(u^{\eps})(u^{\eps}-\bar{u})_t \big)^{\top} (u^{\eps}-\bar{u})_t\\
& \, \quad - \big(\bigl( \D^2 \eta(u^{\eps}) - \D^2 \eta(\bar{u}) \bigr)\bar{u}_t \big)^{\top}
(u^{\eps}-\bar{u})_t+ \bigl( \D\eta(u^{\eps}) - \D\eta(\bar{u}) \big) \bar{u}_{tt}\\
\end{aligned}
\end{equation*}
\begin{equation*}
\begin{aligned}
\bigl( \D\eta(u^{\eps}) & -  \D\eta(\bar{u})\bigr) Au^{\eps}_{xx}\\
& \, = \, \del_x \Bigl(\bigl( \D\eta(u^{\eps}) - \D\eta(\bar{u})\bigr)
A(u^{\eps}-\bar{u})_x \Bigr) - \big( \D^2 \eta(u^{\eps}) (u^{\eps}-\bar{u})_x \big)^{\top} A(u^{\eps}-\bar{u})_x\\
& \, \quad - \big(\bigl( \D^2\eta(u^{\eps}) - \D^2 \eta (\bar{u})\bigr) \bar{u}_x \big)^{\top} A(u^{\eps}-\bar{u})_x + \bigl( \D\eta(u^{\eps})
- \D\eta(\bar{u})\bigr)^{\top} A \bar{u}_{xx}
\end{aligned}
\end{equation*}
and observe that
\begin{equation*}
\begin{aligned}
\eta\bigl(u^{\eps} + \eps (u^{\eps}-\bar{u})_t\bigr) = \eta(u^{\eps}) + \eps \D \eta(u^{\eps})  (u^{\eps}-\bar{u})_t + \eps^2
(u^{\eps}-\bar{u})_t^{\top} \overline{\D ^2 \eta} \SP (u^{\eps}-\bar{u})_t\,.
\end{aligned}
\end{equation*}
Then \eqref{RENTPAIR}, \eqref{RENTID1}  and the last three identities imply \begin{equation}\label{RENTID2}
\begin{aligned}
&\!\!\!\del_t \Bigl\{ H^{rel}(\bar{u},u^{\eps}) - \eps^2 (u^{\eps}-\bar{u})_t^{\top} \SP \overline{\D ^2
\eta} \SP (u^{\eps}-\bar{u})_t \Bigr\} + \del_x  Q^{rel}(\bar{u},u^{\eps}) \\
& \quad + \eps \Bigl\{ \big(\D ^2\eta(u^{\eps})(u^{\eps}-\bar{u})_x \big)^{\top}  (u^{\eps}-\bar{u})_x -
\big(\D ^2\eta(u^{\eps})(u^{\eps}-\bar{u})_t\big)^{\top} (u^{\eps}-\bar{u})_t \Bigr\}\\
& \, = \, \del_x \Bigl\{ \eps (\D \eta(u^{\eps}) - \D \eta(\bar{u}))
A(u^{\eps}-\bar{u})_x\Bigr\}\\
& \,\qquad - \big(\D ^2\eta(\bar{u})\bar{u}_x \big)^{\top} \Big( F(u^{\eps})-F(\bar{u}) - \D F(\bar{u})(u^{\eps}-\bar{u}) \Big)\\
& \,\qquad + a_{1t}+a_{2t}+b_{1x}+b_{2x}+d_1 +d_2
\end{aligned}
\end{equation}
with the last six terms on the right-hand side of the above identity defined in
\eqref{ERRT1}$_{1,2,3,4}$ and \eqref{ERRT2}$_{1,2}$.

\par\smallskip

The identity \eqref{RENTID2} is supplemented by a correction accounting for the fact that
the third term is indefinite. The correcting identity is obtained by multiplying the
equation
\begin{equation*}
\begin{aligned}
    (u^{\eps}-\bar{u})_t +  \D F(u^{\eps})(u^{\eps}-\bar{u})_x & =\eps A (u^{\eps}-\bar{u})_{xx} - \eps(u^{\eps}-\bar{u})_{tt}+ \eps (A \bar{u}_{xx} - \bar{u}_{tt})      \\
    & \quad + (G(u^{\eps})-G(\bar{u})) - (\D F(u^{\eps})-\D F(\bar{u}))\bar{u}_x
\end{aligned}
\end{equation*}
by $(u^{\eps}-\bar{u})_t$ and integrating by parts, which leads to
\begin{equation}\label{CORRECTTERM}
\begin{aligned}
 \del_t \Bigl\{   & \tfrac{1}{2}\,  \eps |u^{\eps}_t-\bar{u}_t|^2   + \tfrac{1}{2}
\,\eps  (u^{\eps}-\bar{u})_x^{\top} \SP A(u^{\eps}-\bar{u})_x\Bigr\}\\
& + |u^{\eps}_t-\bar{u}_t|^2 + \big(\D F(u^{\eps})(u^{\eps}-\bar{u})_x \big)^{\top} (u^{\eps}-\bar{u})_t\\
&\quad \qquad\qquad \, = \, \del_x \Bigl\{ \eps \big(A (u^{\eps}-\bar{u})_x\big)^{\top} (u^{\eps}-\bar{u})_t\Bigr\} + c_{1t} +
c_{2t} + d_3
\end{aligned}
\end{equation}
with the terms on the right-hand side defined in \eqref{ERRT1}$_{5,6}$ and
\eqref{ERRT2}$_3$.

\par\smallskip

Finally, multiplying \eqref{CORRECTTERM} by $2\alpha\eps$ and adding the resulting identity to
\eqref{RENTID2} we obtain \eqref{LFI}.
\end{proof}

\subsection{Preliminary estimate of $\mathcal{G}$}

In this section we establish a preliminary estimate for the functional $\mathcal{G}(\bar{u},u^{\eps})$ employed in the proofs of Theorem \ref{WDESTTHM} and Theorem \ref{gen-source}. For this purpose we define
\begin{equation} \label{PSIDEF}
    \Psi(t):=\int_{\RR} \, |\bar{u}-u^{\eps}|^2 + \eps^2|(\bar{u}-u^{\eps})_x|^2 + \eps^2|(\bar{u}-u^{\eps})_t|^2\,dx\,
\end{equation}
used in our further analysis, where $\bar{u}$, $u$ are smooth solutions to the equilibrium and relaxation system, respectively.

\begin{lemma}\label{GESTLMM}
Let $\bar{u}$, $u^{\eps}$ be smooth solutions of \eqref{BLAW1D}, \eqref{RXSYST1D}, respectively and
suppose that both $\bar{u}$, $u^{\eps}$ decay sufficiently fast at infinity. Suppose
that:

\begin{itemize}
\item[(a1)]  \eqref{RXENTPROP}  holds true and  the positive definite, symmetric matrix $A$, is such that the subcharacteristic condition \eqref{AHYPSCENT1D} is valid.
\item[$(a2)$] For some $M>0$
\begin{equation*}
|D^2F(u)| \leq M, \quad |D^3\eta(u)| \leq M, \quad u\in \RR^n.
\end{equation*}
 \end{itemize}
Then,
\begin{itemize}
\item[$(i)$] There exists $c_1,c_2>0$ independent of $\eps>0$ such that
\begin{equation}\label{TP1a}
    c_1 \Psi(t) \, \leq \, \int_{\RR} \,\mathcal{G}(\bar{u},u^{\eps})\, dx
    \, \leq c_2 \Psi(t)\,.
\end{equation}

\item[$(ii)$] The functional $\mathcal{G}(\bar{u},u^{\eps})$ is positive definite and
satisfies
\begin{equation}\label{GFUNCEVOLPiii}
\begin{aligned}
     \frac{d}{dt} & \int_{\RR} \,\mathcal{G}(\bar{u},u^{\eps})\,dx + \eps \bar{c}
    \int_{\RR} \, |u^{\eps}_x-\bar{u}_x|^2 + |u^{\eps}_t-\bar{u}_t|^2 \,dx\\[2pt]
    & \leq \, C(T,\bar{u}) \Bigl( \Psi(t) + \eps^2\Bigr) + \int_{\RR}
    \Bigl(
    d_1+ d_2 + 2 \eps \alpha d_3 \Bigr) dx
\end{aligned}
\end{equation}
with $d_1,d_2,d_3$ defined in
\eqref{ERRT2} and $C=C(T,\bar{u})>0$ independent of $\eps$.
\end{itemize}

\end{lemma}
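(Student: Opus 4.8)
The plan is to treat the two parts separately: part $(i)$ is a coercivity/equivalence statement for $\mathcal{G}$, while part $(ii)$ follows by integrating the relative entropy identity \eqref{LFI} of Lemma \ref{RENTIDENTITY} over $\RR$ and carefully estimating its right-hand side. Throughout I write $w=u^{\eps}-\bar{u}$ for brevity. For part $(i)$, observe that $H^{rel}(\bar{u},u^{\eps})$ in \eqref{RENTPAIR} is precisely the relative entropy of $\bar{u}+(w+\eps w_t)$ with respect to $\bar{u}$, so Taylor's theorem with integral remainder together with the two-sided bound $\beta\,{\bf I}\le \D^2\eta\le \tfrac12\alpha\,{\bf I}$ of \eqref{RXENTPROP} gives $\tfrac{\beta}{2}|w+\eps w_t|^2\le H^{rel}\le \tfrac{\alpha}{4}|w+\eps w_t|^2$. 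Since $\int_0^1\!\int_0^s d\tau\,ds=\tfrac12$, the averaged Hessian $\overline{\D^2\eta}$ of \eqref{AVGD2ETA} satisfies $\tfrac{\beta}{2}{\bf I}\le\overline{\D^2\eta}\le\tfrac{\alpha}{4}{\bf I}$, so $\alpha{\bf I}-\overline{\D^2\eta}$ has eigenvalues in $[\tfrac{3\alpha}{4},\,\alpha-\tfrac{\beta}{2}]$ and is uniformly positive definite; with the positive definiteness of $A$ the last two terms of \eqref{GFUNCTIONALDEF} are comparable to $\eps^2|w_t|^2$ and $\eps^2|w_x|^2$. Finally I would use $\tfrac12|w|^2\le|w+\eps w_t|^2+\eps^2|w_t|^2$ and $|w+\eps w_t|^2\le 2|w|^2+2\eps^2|w_t|^2$, exactly as in the passage leading to \eqref{PHIEQUIVENT}, to trade $|w+\eps w_t|^2$ for $|w|^2$; because $\tfrac{3\alpha}{4}>\tfrac{\beta}{2}$ the coefficient of $\eps^2|w_t|^2$ stays positive, and \eqref{TP1a} follows with $c_1,c_2$ depending only on $\alpha,\beta$ and $A$.

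For part $(ii)$, positive definiteness is immediate from the lower bound in \eqref{TP1a}. I would then integrate \eqref{LFI} over $\RR$: the two $\del_x(\cdots)$ fluxes and $\del_x Q^{rel}$ vanish by the fast decay of $\bar{u},u^{\eps}$. On the left-hand side the first term $\eps\alpha|w_t+\D F(u^{\eps})w_x|^2$ is nonnegative and may be dropped, while \eqref{RXENTPROP} yields $\alpha{\bf I}-\D^2\eta(u^{\eps})\ge\tfrac{\alpha}{2}{\bf I}$, and the subcharacteristic condition \eqref{AHYPSCENT1D}, applied to the symmetric part of $\D^2\eta(u^{\eps})A$ (only the symmetric part enters the quadratic form), gives $w_x^{\top}\bigl(\D^2\eta(u^{\eps})A-\alpha\D F(u^{\eps})^{\top}\D F(u^{\eps})\bigr)w_x\ge\nu|w_x|^2$. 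Hence the two surviving dissipative terms dominate $\eps\bar{c}\int(|w_t|^2+|w_x|^2)$ with $\bar{c}=\min(\tfrac{\alpha}{2},\nu)$, which is the good term on the left of \eqref{GFUNCEVOLPiii}.

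The main work, and the principal obstacle, is to bound the error terms $a_1,a_2,b_1,b_2$ and $2\eps\alpha(c_1+c_2)$ of \eqref{ERRT1}, together with the Taylor remainder $-(\D^2\eta(\bar{u})\bar{u}_x)^{\top}\bigl(F(u^{\eps})-F(\bar{u})-\D F(\bar{u})w\bigr)$, by $C(T,\bar{u})\bigl(\Psi(t)+\eps^2\bigr)$. Here I would invoke hypothesis $(a2)$: $|\D^2F|\le M$ and $|\D^3\eta|\le M$ give $|\D F(u^{\eps})-\D F(\bar{u})|,\,|\D^2\eta(u^{\eps})-\D^2\eta(\bar{u})|\le M|w|$ and make the Taylor remainder $O(|w|^2)$, while \eqref{RXENTPROP} gives $|\D\eta(u^{\eps})-\D\eta(\bar{u})|\le\tfrac{\alpha}{2}|w|$. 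Terms pairing $|w|$ with a first difference-derivative ($a_1,b_1,c_2$) are handled by Young's inequality as $\eps|w||w_t|+\eps|w||w_x|\lesssim|w|^2+\eps^2(|w_t|^2+|w_x|^2)\le\Psi$; terms carrying a second derivative of the smooth solution $\bar{u}$ ($a_2,b_2,c_1$) produce factors such as $\eps|w|\,|\bar{u}_{tt}|$ or $\eps^2|w_t|\,|A\bar{u}_{xx}-\bar{u}_{tt}|$, which Young's inequality splits into $|w|^2$ (or $\eps^2|w_t|^2\le\Psi$) plus $\eps^2$ times an $L^2$-norm of a derivative of $\bar{u}$, the latter being finite and $T$-dependent and thus absorbed into $C(T,\bar{u})$. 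The source terms $d_1,d_2,2\eps\alpha d_3$ are left untouched on the right-hand side, to be treated in the proofs of Theorem \ref{WDESTTHM} and Theorem \ref{gen-source}. Collecting these estimates yields \eqref{GFUNCEVOLPiii}. The delicate point throughout is the bookkeeping of powers of $\eps$: every error contribution must land either inside $\Psi$ — using in particular $\eps^2\int|w_t|^2,\,\eps^2\int|w_x|^2\le\Psi$ — or inside the single clean $\eps^2$ term, with no stray $O(1)$ or $O(\eps)$ remainder surviving.
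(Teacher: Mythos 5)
Your proposal is correct and follows essentially the same route as the paper: part $(i)$ via two-sided Taylor bounds on $H^{rel}$ and on $\alpha{\bf I}-\overline{\D^2\eta}$ together with the positive definiteness of $A$, and part $(ii)$ by integrating \eqref{LFI}, using \eqref{RXENTPROP} and the subcharacteristic condition \eqref{AHYPSCENT1D} for the dissipative terms, and hypothesis $(a2)$ with Young's inequality to absorb the error terms \eqref{ERRT1} into $C(T,\bar u)(\Psi+\eps^2)$ while leaving $d_1,d_2,2\eps\alpha d_3$ untouched. Your constants are in fact sharper than the paper's (e.g.\ $\tfrac{\beta}{2},\tfrac{\alpha}{4}$ in place of \eqref{RENTEQUIV}'s $\beta,\alpha$, and $\tfrac{3\alpha}{4}$ in place of $\tfrac{\alpha}{2}$ in \eqref{ALPHADOM2}), and your remark that only the symmetric part of $\D^2\eta(u^\eps)A$ enters the quadratic form makes explicit a step the paper leaves implicit.
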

\begin{proof}
From \eqref{RXENTPROP} and 
\eqref{RENTPAIR}$_1$ we have
\begin{equation}\label{RENTEQUIV}
    \beta |u^{\eps}+\eps(u^{\eps}-\bar{u})_t-\bar{u}|^2 \SP \leq \SP H^{rel}(\bar{u},u^{\eps})\, \leq \, \alpha |u^{\eps} +
    \eps(u^{\eps}-\bar{u})_t-\bar{u}|^2\,.
\end{equation}
Also, \eqref{RXENTPROP}  and \eqref{AVGD2ETA} imply
\begin{equation}\label{ALPHADOM2}
\alpha {\bf I} \SP \geq \SP \alpha \SP {\bf I} - \overline{D^2\eta}  = \alpha I - \int_0^1 \int_0^s \, D^2 \eta
\bigl(u^{\eps}+\eps \tau (u^{\eps}-\bar{u})_t \bigr) \, d\tau ds \, \geq \,
\tfrac{1}{2}\alpha I.
\end{equation}
Combining \eqref{GFUNCTIONALDEF}, \eqref{RENTEQUIV}, \eqref{ALPHADOM2}  and recalling that $A$ is symmetric, positive definite we get \eqref{TP1a}.

\par\smallskip

Next, integrating \eqref{LFI}  we use \eqref{AHYPSCENT1D} and \eqref{ALPHADOM2} to conclude
\begin{equation}\label{TP2}
\begin{aligned}
\frac{d}{dt}  & \int_{\RR}  \,\mathcal{G}(\bar{u},u^{\eps})\, dx + \eps \bar{c}
\int_{\RR} \, |(\bar{u}-u^{\eps})_x|^2 + |(\bar{u}-u^{\eps})_t|^2 \,
dx\\
& \leq \int_{\RR} \Bigl\{ \bigl|\big(\D^2\eta(\bar{u})\bar{u}_x\big)^{\top}
\bigl(F(u^{\eps})-F(\bar{u})-\D F(\bar{u})(u^{\eps}-\bar{u})\bigr)\bigr|\\
&\qquad+\bigl|a_{1}+a_{2}+b_{1}+b_{2}+2\eps\alpha(c_{1}+c_{2})\bigr|
+d_1+d_2+2\SP\eps\alpha d_3 \Bigr\} \, dx
\end{aligned}
\end{equation}
By $(a3)$ we have
\begin{equation*}
\begin{aligned}
\int_{\RR} \,  \bigl| & \big(\D^2\eta(\bar{u})\bar{u}_x \big)^{\top} \bigl(
F(u^{\eps})-F(\bar{u})-\D F(\bar{u})(u^{\eps}-\bar{u})\bigr)\bigr| \,dx \,
\leq \, C \BBN{u^{\eps}-\bar{u}}_{L^2}^2
\end{aligned}
\end{equation*}
and the error terms in \eqref{ERRT1} are estimated by
\begin{equation*}
\begin{aligned}
\BBN{a_{1}}_{L^1} &\leq \eps C \BBN{\bar{u}_t}_{L^{\infty}}
\BBN{u^{\eps}-\bar{u}}_{L^2} \BBN{u^{\eps}_t-\bar{u}_t}_{L^2}\\
\BBN{b_{1}}_{L^1} & \leq \eps C \BBN{\bar{u}_{x}}_{L^{\infty}}\BBN{u^{\eps}-\bar{u}}_{L^2} \BBN{u^{\eps}_x
-\bar{u}_x}_{L^2}
\end{aligned}
\quad
\begin{aligned}
\BBN{a_{2}}_{L^1}  &\leq \eps C \BBN{\bar{u}_{tt}}_{L^2} \BBN{u^{\eps}-\bar{u}}_{L^2}\\
\BBN{b_{2}}_{L^1}  &\leq \eps C \BBN{\bar{u}_{xx}}_{L^2} \BBN{u^{\eps}-\bar{u}}_{L^2}
\end{aligned}
\end{equation*}
and \vspace{3pt}
\begin{equation*}\vspace{3pt}
\begin{aligned}
\BBN{\eps c_{1}}_{L^1} &\leq \eps^2 C \bigl(\BBN{\bar{u}_{tt}}_{L^2} + \BBN{\bar{u}_{xx}}_{L^2}
\bigr)
\BBN{u^{\eps}_t-\bar{u}_t}_{L^2}\\
\BBN{\eps c_{2}}_{L^1} & \leq \eps \SP C \BBN{\bar{u}_{x}}_{L^{\infty}} \BBN{u^{\eps}-\bar{u}}_{L^2}
\BBN{u^{\eps}_t-\bar{u}_t}_{L^2}\,,
\end{aligned}
\end{equation*}
where $C$ is a generic constant that depends on $\alpha$, $M$ and norms of $\bar{u}$. Then,  by
\eqref{TP1a}, \eqref{TP2} and the above estimates we obtain \eqref{GFUNCEVOLPiii}.
\end{proof}


\subsection{Error estimates. Weakly dissipative source $G(u)$}

{\black To establish the convergence result for weakly dissipative source we introduce the {\it relative potential}
\begin{equation}\label{RREL}
   R^{rel}(\bar{u},u^{\eps}):= R(u^{\eps}) - R(\bar{u})  - \D R(\bar{u})(u-\bar{u}) \geq 0
\end{equation}
which is well-defined whenever $G \in C^1(\RR^n)$. As we will see in the next theorem the smoothness of $G$, which in our work is in general assumed to be $C(\RR^n)$, will have an impact on the rate of convergence.}

\begin{theorem}\label{WDESTTHM}
Let $\bar{u}(x,t)$ be a smooth solution of the equilibrium system \eqref{BLAW1D}, defined on $\RR^d \times [0,T]$. Let $\{u^{\eps}\}$ be a family of smooth solutions of the relaxation system \eqref{RXSYST1D} on $\RR \times [0,T]$. Suppose that both $\bar{u}$ and $u^{\eps}$ decay sufficiently fast at infinity and that:
\begin{itemize}
\item[(a1)]  (H1) holds true and  the positive definite, symmetric matrix $A$, is such that the subcharacteristic condition \eqref{AHYPSCENT1D} is valid.
\item[$(a2)$] For some $M>0$
\begin{equation*}
|D^2F(u)| \leq M, \quad |D^3\eta(u)| \leq M, \quad u\in \RR^n.
\end{equation*}
\item[(a3)] 
The conditions \eqref{WDSRC}, \eqref{POTN}  on the source term hold true.
 \end{itemize}

\par\smallskip

\noindent Then for all $t\in[0,T]$
\begin{equation}\label{WDEST}
  \Psi(t) + \eps \!\int_{\RR} R(u^{\eps}(x,t)) \SP dx \, \leq \, C \bigg( \Psi(0) + \eps \!\int_{\RR} R(u^{\eps}(x,0)) \SP dx  + \eps \bigg)
\end{equation}
with $C=C(\bar{u},\alpha,\beta,\nu,M,T)>0$ independent of $\eps$.

\par\bigskip

If, in addition, $G\in C^2(\RR^n)$ then for all $t\in[0,T]$
\begin{equation}\label{WDEST2}
  \Psi(t) + \eps \!\int_{\RR} \big[R^{rel}(\bar{u},u^{\eps}) \big](x,t) \, dx \, \leq \, C \bigg( \Psi(0) + \eps \!\int_{\RR} \big[R^{rel}(\bar{u},u^{\eps})\big](x,0) \, dx + \eps^2 \bigg)\,.
\end{equation}
\end{theorem}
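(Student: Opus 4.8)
The plan is to start from the differential inequality \eqref{GFUNCEVOLPiii} of Lemma \ref{GESTLMM}, which already absorbs every ``geometric'' error term (the flux Taylor remainder together with $a_1,a_2,b_1,b_2,c_1,c_2$) into the controllable quantity $C(T,\bar u)\bigl(\Psi(t)+\eps^2\bigr)$. What remains is to dominate the genuinely source-driven terms $\int_\RR (d_1+d_2+2\eps\alpha d_3)\,dx$ and then close the estimate by Gronwall's inequality, using the coercivity \eqref{TP1a}, $c_1\Psi(t)\le \int_\RR \mathcal G\,dx\le c_2\Psi(t)$. The three source terms have very different character, and it is their treatment — especially that of $d_3$ — that constitutes the heart of the argument, since $G$ is assumed to satisfy no growth condition.

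First I would dispose of $d_1$ and $d_2$. By the weak dissipation hypothesis \eqref{WDSRC}, $d_1=\bigl(\D\eta(u^\eps)-\D\eta(\bar u)\bigr)\bigl(G(u^\eps)-G(\bar u)\bigr)\le 0$, so this term may simply be discarded (it even helps). In $d_2=G(\bar u)^\top\bigl(\D\eta(u^\eps)-\D\eta(\bar u)-\D^2\eta(\bar u)(u^\eps-\bar u)\bigr)$ the parenthesis is the first-order Taylor remainder of $\D\eta$ at $\bar u$, hence bounded by $\tfrac12 M|u^\eps-\bar u|^2$ via hypothesis $(a2)$; since $\bar u$ is a fixed smooth decaying solution, $G(\bar u)$ is bounded, whence $\int_\RR|d_2|\,dx\le C\,\Psi(t)$.

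The crux is $d_3=\bigl(G(u^\eps)-G(\bar u)\bigr)^\top(u^\eps-\bar u)_t$. For the first assertion \eqref{WDEST} I would exploit the potential structure \eqref{POTN}: writing $G=-\D R^\top$ gives the pointwise identity $G(u^\eps)^\top u^\eps_t=-\tfrac{d}{dt}R(u^\eps)$, so that
\[
2\eps\alpha\, d_3=-2\eps\alpha\,\tfrac{d}{dt}R(u^\eps)-2\eps\alpha\,G(u^\eps)^\top\bar u_t-2\eps\alpha\,G(\bar u)^\top u^\eps_t+2\eps\alpha\,G(\bar u)^\top\bar u_t .
\]
The first term is moved to the left-hand side, which is exactly why $\eps\int_\RR R(u^\eps)$ appears on both sides of \eqref{WDEST}. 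The growth bound $|G(u^\eps)|\le C_R(1+R(u^\eps))$ controls the second term by $C\eps(1+R(u^\eps))|\bar u_t|$, integrating to $C\eps\bigl(1+\int_\RR R(u^\eps)\bigr)$ by the decay of $\bar u_t$; the terms containing $G(\bar u)$ are handled by Young's inequality, placing the $\eps|u^\eps_t-\bar u_t|^2$ contribution into the dissipation $\eps\bar c\int(\cdots)$ on the left and leaving an $O(\eps)$ remainder. Setting $E(t)=\int_\RR\mathcal G\,dx+2\eps\alpha\int_\RR R(u^\eps)\,dx$, which by \eqref{TP1a} and $R\ge0$ is equivalent to $\Psi(t)+\eps\int_\RR R(u^\eps)$, I obtain $\tfrac{d}{dt}E\le C\,E+C\eps$, and Gronwall's lemma yields \eqref{WDEST}.

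For the sharper rate \eqref{WDEST2} under $G\in C^2$ I would replace $R(u^\eps)$ by the relative potential $R^{rel}(\bar u,u^\eps)$ of \eqref{RREL}. A direct computation using $G=-\D R^\top$ gives $\tfrac{d}{dt}R^{rel}=-d_3+\bar u_t^\top\big[(\D R(u^\eps)-\D R(\bar u))^\top-\D^2 R(\bar u)(u^\eps-\bar u)\big]$, where the bracket is now the \emph{second}-order Taylor remainder of $\D R$ at $\bar u$, hence $O(|u^\eps-\bar u|^2)$ because $R\in C^3$. Consequently $2\eps\alpha\, d_3=-2\eps\alpha\,\tfrac{d}{dt}R^{rel}+O\!\bigl(\eps\,|u^\eps-\bar u|^2\bigr)$, the derivative being absorbed into $E(t)=\int_\RR\mathcal G\,dx+2\eps\alpha\int_\RR R^{rel}\,dx$ and the remainder bounded by $C\eps\,\Psi(t)\le C E(t)$. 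This produces $\tfrac{d}{dt}E\le C\,E+C\eps^2$, and Gronwall's lemma gives \eqref{WDEST2}. The improvement from $\eps$ to $\eps^2$ is precisely the gain obtained by cancelling, through the quadratic structure of $R^{rel}$, the $O(\eps)$ term that in the merely continuous case survives in \eqref{WDEST}. I expect the main obstacle to be the bookkeeping for $d_3$: identifying the exact time-derivative structure that legitimizes moving a source term to the left without any growth control on $G$, and verifying that the residual (growth-type in the first part, quadratic in the second) is genuinely of the claimed order.
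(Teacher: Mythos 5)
Your first part (the $O(\eps)$ estimate \eqref{WDEST}) is correct and coincides with the paper's own proof: $d_1\le 0$ by \eqref{WDSRC}, $d_2$ is a quadratic Taylor remainder multiplied by the bounded factor $G(\bar u)$, and in $d_3$ the exact derivative $G(u^{\eps})^{\top}u^{\eps}_t=-\del_t R(u^{\eps})$ is moved to the left-hand side while the cross terms are controlled through \eqref{POTN}$_3$ and Young's inequality, after which Gronwall applied to $\Psi(t)+\eps\int_{\RR}R(u^{\eps})\,dx$ closes the estimate.

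The second part, however, has a genuine gap, located exactly at the step you yourself flagged as ``the main obstacle.'' You claim that the bracket $\D R(u^{\eps})-\D R(\bar u)-\D^2R(\bar u)(u^{\eps}-\bar u)$ is $O(|u^{\eps}-\bar u|^2)$ ``because $R\in C^3$.'' A second-order Taylor bound with a \emph{uniform} constant requires $\D^3R$ to be bounded on the segment joining $\bar u(x,t)$ and $u^{\eps}(x,t)$; but $G\in C^2$ yields only locally bounded $\D^3R$ (no growth restrictions are imposed on $G$ --- avoiding them is the whole point of the relative-potential device), and $u^{\eps}$ carries no $L^{\infty}$ bound, since the available stability estimates are only of energy type. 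At points where $u^{\eps}(x,t)$ leaves every fixed compact set, the constant in your $O(|u^{\eps}-\bar u|^2)$ blows up and the Gronwall argument does not close. The paper repairs this with a dichotomy: fix a ball $B_M$ containing the compact range of $\bar u$ and such that $|\bar u(x,t)-v|>1$ for all $v\notin B_M$, and set $\lambda^M_G=\sup_{B_M}\big(|R|+|\D R|+|\D^2R|+|\D^3R|\big)$. Where $u^{\eps}\in B_M$ your Taylor bound is valid with constant $\lambda^M_G$; where $u^{\eps}\notin B_M$ one has $|u^{\eps}-\bar u|>1$, and the growth hypothesis \eqref{POTN}$_3$, $|\D R(u^{\eps})|\le C_R\big(1+R(u^{\eps})\big)$, combined with
\begin{equation*}
R(u^{\eps})=R^{rel}(\bar u,u^{\eps})+R(\bar u)+\D R(\bar u)(u^{\eps}-\bar u)\le \big(1+2\lambda^M_G\big)\max\big(R^{rel}(\bar u,u^{\eps}),|u^{\eps}-\bar u|^2\big),
\end{equation*}
gives the pointwise bound
\begin{equation*}
\big|\D R(u^{\eps})-\D R(\bar u)-\D^2R(\bar u)(u^{\eps}-\bar u)\big|\le C\max\big(R^{rel}(\bar u,u^{\eps}),|u^{\eps}-\bar u|^2\big).
\end{equation*}
This weaker bound is still absorbable precisely because $R^{rel}$ is part of your Lyapunov functional $E(t)$, and with it your Gronwall step goes through unchanged and yields \eqref{WDEST2}; without the exterior-region argument, $C^3$ regularity of $R$ alone does not suffice.
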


\begin{proof} By \eqref{RXENTPROP}, \eqref{LISRC}, and \eqref{ERRT2} we obtain \begin{equation}\label{DESTforP3}\vspace{3pt}
\begin{aligned}
d_1    & \SP \leq \SP   (\D\eta(u^{\eps})-\D\eta(\bar{u})) (G(u^{\eps})-G(\bar{u})) \leq 0 \\
d_2  &\SP \leq \SP \alpha\BBN{G(\bar{u})}_{L^{\infty}}  |u^{\eps}-\bar{u}|^2 \\
\eps d_3 
& \,\leq \,-\eps \SP \del_t R(u^{\eps})  + \eps C_R (1+R(u^{\eps}))|\bar{u}_t| + \eps |G(\bar{u})||u^{\eps}_t-\bar{u}_t|.
\end{aligned}
\end{equation}
Then, combining \eqref{GFUNCEVOLPiii} and \eqref{DESTforP3} we obtain
\begin{equation}\label{INTESTWD}
\begin{aligned}
\frac{d}{dt} & \int_{\RR} \, \mathcal{G}(\bar{u},u^{\eps})\, dx + \eps \bar{c}
\int_{\RR} \, |u^{\eps}_x-\bar{u}_x|^2 + |u^{\eps}_t  - \bar{u}_t|^2 \, dx\\
& \leq \, C \Bigl( \Psi(t)+\eps^2 \Bigr) + 2 \SP \eps \alpha C_R\SP
\Bigl(\BBN{\bar{u}_t}_{L^1} + \BBN{\bar{u}_t}_{L^{\infty}} \int_{\RR}
R(u^{\eps})\,dx \Bigr)\\
& \,\,\quad -2 \eps \alpha \frac{d}{dt} \int_{\RR} \, R(u^{\eps})\, dx + \frac{2\SP\eps
\alpha^2}{\bar{c}} \int_{\RR} \, |G(\bar{u})|^2\, dx + \frac{\eps \bar{c}}{2} \int_{\RR} \,
|u^{\eps}_t - \bar{u}_t|^2 \,dx.
\end{aligned}
\end{equation}
Integrating the above inequality, and using \eqref{RXENTPROP}, \eqref{POTN}, \eqref{TP1a} and \eqref{PSIDEF},  we obtain
\begin{equation*}
\begin{aligned}
&\Psi(t) + \eps \int_{\RR} R(u^{\eps}(x,t)) \SP dx \\
&\quad \leq  C \bigg(\SP
\Psi(0) + \eps \int_{\RR} R(u^{\eps}(x,0)) \SP dx +  \int_{0}^t \Big\{\Psi(s) + \eps \int_{\RR} R(u^{\eps}(x,s)) \, dx \SP  \Big\} \SP ds +\eps t + \eps^2 t\bigg)
\end{aligned}
\end{equation*}
and conclude \eqref{WDEST} via the Gronwall lemma.

\par\medskip
{  \black
Suppose now that $G\in C^2(\RR^n)$. Then, using \eqref{POTN} and \eqref{RREL}, we obtain
\begin{equation}\label{DTRREL}
\begin{aligned}
  \del_t \big( R^{rel}(\bar{u},u^{\eps})\big) &= -G(u^{\eps})^{\top} u^{\eps}_t + G(\bar{u})^{\top}\bar{u}_t + G(\bar{u})^{\top}(u^{\eps}-\bar{u})_t \\ &\,\quad - (\D^2 R(\bar{u})\bar{u}_t)^{\top}(u^{\eps}-\bar{u})\\[3pt]
  &= (G(\bar{u})-G(u^{\eps}))^{\top}u^{\eps}_t - (\D^2 R(\bar{u})\bar{u}_t)^{\top}(u^{\eps}-\bar{u})\,.
\end{aligned}
\end{equation}
Then, using  \eqref{POTN}, \eqref{ERRT2}$_3$ and \eqref{DTRREL}, we obtain
\begin{equation}\label{d3viaRREL}
\begin{aligned}
  d_3 & = \bigl( G(u^{\eps})-G(\bar{u}) \bigr)^{\top} (u^{\eps}-\bar{u})_t\\[3pt]
      & = -\del_t \big( R^{rel}(\bar{u},u^{\eps}) \big) - (\D^2 R(\bar{u})\bar{u}_t)^{\top}(u^{\eps}-\bar{u}) - (G(u^{\eps})-G(\bar{u}))^{\top} \bar{u}_t\\[3pt]
            & = -\del_t \big( R^{rel}(\bar{u},u^{\eps}) \big) + \bar{u}_t^{\top}\big(\D R(u^{\eps})-\D R(\bar{u}) -\D^2 R(\bar{u})(u^{\eps}-\bar{u}) \big)\,.
\end{aligned}
\end{equation}
Then, combining \eqref{GFUNCEVOLPiii} with \eqref{DESTforP3}$_{1,2}$ and \eqref{d3viaRREL}, we obtain
\begin{equation}\label{INTESTWD2}
\begin{aligned}
     \frac{d}{dt} & \int_{\RR} \,\mathcal{G}(\bar{u},u^{\eps})\,dx + \eps \bar{c}
    \int_{\RR} \, |u^{\eps}_x-\bar{u}_x|^2 + |u^{\eps}_t-\bar{u}_t|^2 \,dx\\[2pt]
    & \leq \, C(T,\bar{u}) \Bigl( \Psi(t) + \eps^2\Bigr)  -2 \eps \alpha \frac{d}{dt}  \int_{\RR} R^{rel}(\bar{u},u^{\eps}) \, dx \\
   &\qquad + 2 \eps \alpha \int_{\RR}   \Big\{\bar{u}_t^{\top}\big(\D R(u^{\eps})-\D R(\bar{u}) -\D^2 R(\bar{u})(u^{\eps}-\bar{u}) \big)\Big\} \SP dx\,.
\end{aligned}
\end{equation}

\par\smallskip

{
Now, we estimate the last term on the right-hand side of \eqref{INTESTWD2}. By assumption on $\bar{u}$ there exists a compact set $\mathcal{K} \subset \RR^n$ such that
\begin{equation*}
\bar{u}(x,t) \in \mathcal{K}\,, \quad (x,t) \in \RR \times [0,T].
\end{equation*}
Thus, we can take large enough  $M>0$ such that
\begin{equation}\label{ballboundsol}
 \mathcal{K} \subset B_M\, \quad \mbox{and}  \quad |\bar{u}(x,t) -v | > 1 \quad \mbox{for all} \quad  v\in {B_M}^C\,, \, (x,t) \in \RR \times [0,T],
\end{equation}
where $B_M$ denotes a ball of radius $M$. Then, since $G=-\D R \in C^2(\RR^n)$, we obtain
\begin{equation}\label{ballboundG}
  \lambda^M_G := \sup_{v \in B_{M}} \Big( |R(v)|+|\D R(v)|+|\D^2 R(v)|+|\D^3 R(v)| \Big) \,  < \infty \,.
\end{equation}

\par\smallskip

Fix $(x,t)\in \RR \times [0,T]$. Suppose $u(x,t) \in \B_{M}$. Then by \eqref{ballboundG}
\begin{equation}\label{ballboundGREL}
  |\D R(u^{\eps}) - \D R(\bar{u}) - \D^2 R(\bar{u})(u^{\eps}-\bar{u})| \leq \lambda^M_{G} |u^{\eps}-\bar{u}|^2\,.
\end{equation}

\par\smallskip

Suppose now that $u^{\eps}(x,t) \in {\B_{M}}^C$. Then, by \eqref{ballboundsol} we have $|u^{\eps}(x,t)-\bar{u}(x,t)|> 1$. Hence
\begin{equation}\label{compballound1}
\begin{aligned}
  0 &\leq R(u^{\eps}(x,t)) = R^{rel}(\bar{u},u^{\eps}) + R(\bar{u})+\D R(\bar{u})(u-\bar{u})\\
  & \leq \max(R^{rel},|u^{\eps}-\bar{u}|^2)\Big(1 +  2\lambda^M_G \Big)\,.
  \end{aligned}
\end{equation}
Then, using \eqref{ballboundG}, \eqref{compballound1} and \eqref{POTN}, we obtain
\begin{equation}\label{compballound2}
\begin{aligned}
|\D R(u^{\eps}) &- \D R(\bar{u}) - \D^2 R(\bar{u})(u^{\eps}-\bar{u})| \\
&\, \leq \, 2\max(R(u^{\eps}),|u^{\eps}-\bar{u}|^2) \bigg( \frac{|\D R(u^{\eps})|}{R(u^{\eps})+1} + \lambda_G^M \bigg) \\
&\, \leq \, 2\max(R(u^{\eps}),|u^{\eps}-\bar{u}|^2) \big( C_R + \lambda_G^M \big)\\
& \leq C \max(R^{rel},|u^{\eps}-\bar{u}|^2)
\end{aligned}
\end{equation}
for some $C>0$ that depends on $\lambda_G^M$ and $C_R$.

\par\smallskip

Since $(x,t)$ is arbitrarily chosen, combining  \eqref{ballboundGREL} and \eqref{compballound2}, we conclude
\begin{equation}\label{GRELEST}
\begin{aligned}
|\D R(u^{\eps}) &- \D R(\bar{u}) - \D^2 R(\bar{u})(u^{\eps}-\bar{u})| \,  \leq \, C \max(R^{rel}(u^{\eps}),|  u^{\eps}-\bar{u}|^2)
\end{aligned}
\end{equation}
for all $(x,t) \in \RR \times [0,T]$. Thus
\begin{equation}\label{INTESTGREL}
\begin{aligned}
&2 \eps \alpha \!\int_{\RR} \Big\{\bar{u}_t^{\top}\big(\D R(u^{\eps})-\D R(\bar{u}) -\D^2 R(\bar{u})(u^{\eps}-\bar{u}) \big)\Big\} \SP dx \, \\
&\qquad\qquad\qquad \leq \, C \bigg(\eps \int_{\RR} \SP R^{rel}(\bar{u},u^{\eps}) \SP dx+\eps \int_{\RR}|\bar{u}-u^{\eps}|^2 \SP dx\bigg)\,.
\end{aligned}
\end{equation}
Then, integrating the inequality \eqref{INTESTWD2} in time, and using the \eqref{TP1a}, \eqref{PSIDEF},  and \eqref{INTESTGREL},  we obtain
\begin{equation}\label{INTESTWD2FIN}
\begin{aligned}
&      \Psi(t) + \eps \int_{\RR} \big[R^{rel}(\bar{u},u^{\eps})\big](x,t) \, dx \\ &\leq \, C \bigg( \Psi(0)+ \eps \int_{\RR} \big[R^{rel}(\bar{u},u^{\eps})\big](x,0) \SP dx  \\
 &\qquad\quad + \int_0^{\tau} \Bigl\{ \Psi(\tau) + \eps \int_{\RR} \big[R^{rel}(\bar{u},u^{\eps})\big](x,\tau) \SP dx \Bigr\} \SP d\tau + \eps^2 \bigg)
\end{aligned}
\end{equation}
which along with the Gronwall lemma implies \eqref{WDEST}.
}
}
\end{proof}

\subsection{Error estimates. General source $G(u)$}
We now consider the source term that satisfies the alternative hypothesis \eqref{LISRC}.
\begin{theorem}\label{gen-source}
 Let $\bar{u}(x,t)$ be a smooth solution of the equilibrium system \eqref{BLAW1D}, defined on $\RR\times [0,T]$. Let $\{u^{\eps}\}$ be a family of smooth solutions of the relaxation system \eqref{RXSYST1D} on $\RR \times [0,T]$. Suppose that both $\bar{u}$ and $u^{\eps}$ decay sufficiently fast at infinity and that:
\begin{itemize}\vspace{3pt}
\item[(a1)]  (H1) holds true and  the positive definite, symmetric matrix $A$, is such that the subcharacteristic condition \eqref{AHYPSCENT1D} is valid.
\item[$(a2)$] For some $M>0$
\begin{equation*}
|\D^2F(u)| \leq M, \quad |\D^3\eta(u)| \leq M, \quad u\in \RR^n.
\end{equation*}
\item[(a3)]  The condition (H3-b) on the source term holds true.
\end{itemize}
Then,
\begin{equation}\label{ESTG}\vspace{2pt}
    \Psi(t) \, \leq \, C \SP \bigl(\Psi(0)+\eps^2\bigr), \quad
    t\in[0,T]
\end{equation}
with $\Psi$ defined in \eqref{PSIDEF} and $C=C(\bar{u},\alpha,\beta,\nu,M,T)>0$ independent of $\eps$.
\end{theorem}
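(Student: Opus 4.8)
The plan is to start from the differential inequality \eqref{GFUNCEVOLPiii} of Lemma \ref{GESTLMM}, which already absorbs all of the relaxation-approximation error into the term $C(T,\bar{u})\bigl(\Psi(t)+\eps^2\bigr)$ and isolates the three source contributions $d_1,d_2,2\SP\eps\alpha d_3$ defined in \eqref{ERRT2}. Thus the entire task reduces to controlling these three terms under the Lipschitz hypothesis \eqref{LISRC} alone, \emph{without} invoking the potential $R$ or the weak-dissipation sign used in Theorem \ref{WDESTTHM}. Since the smooth solution $\bar{u}(x,t)$ ranges in a fixed compact set $\mathcal{K}\subset\RR^n$, \eqref{LISRC} furnishes a single Lipschitz constant $L=L_{\mathcal{K}}$ with $|G(u^{\eps})-G(\bar{u})|\leq L\SP|u^{\eps}-\bar{u}|$ for all $u^{\eps}$, together with a uniform bound $|G(\bar{u})|\leq C$.

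First I would estimate $d_1$ and $d_2$, both of which should reduce pointwise to $C\SP|u^{\eps}-\bar{u}|^2$. For $d_1$, the bound $\D^2\eta\leq\tfrac{1}{2}\alpha\SP{\bf I}$ from \eqref{RXENTPROP} gives $|\D\eta(u^{\eps})-\D\eta(\bar{u})|\leq\tfrac{1}{2}\alpha|u^{\eps}-\bar{u}|$, which combined with the Lipschitz bound on $G$ produces $|d_1|\leq\tfrac{1}{2}\alpha L\SP|u^{\eps}-\bar{u}|^2$. For $d_2$, the factor in parentheses is precisely the second-order Taylor remainder of $\D\eta$ at $\bar{u}$, so $|\D^3\eta|\leq M$ from $(a2)$ yields $|d_2|\leq\tfrac{1}{2}M\SP|G(\bar{u})|\,|u^{\eps}-\bar{u}|^2\leq C\SP|u^{\eps}-\bar{u}|^2$. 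Integrating, $\int_{\RR}\bigl(|d_1|+|d_2|\bigr)\,dx\leq C\BBN{u^{\eps}-\bar{u}}_{L^2}^2\leq C\SP\Psi(t)$.

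The delicate term is $2\SP\eps\alpha d_3=2\SP\eps\alpha\bigl(G(u^{\eps})-G(\bar{u})\bigr)^{\top}(u^{\eps}-\bar{u})_t$, because it carries the time derivative $(u^{\eps}-\bar{u})_t$. Here the crucial structural point is that $d_3$ enters \eqref{GFUNCEVOLPiii} with a prefactor $\eps$ that matches the $\eps$-weighted dissipation $\eps\bar{c}\int_{\RR}\bigl(|u^{\eps}_x-\bar{u}_x|^2+|u^{\eps}_t-\bar{u}_t|^2\bigr)\,dx$ on the left-hand side. Applying the Lipschitz bound together with Young's inequality at the correct weight,
\begin{equation*}
|2\SP\eps\alpha d_3|\leq 2\SP\eps\alpha L\SP|u^{\eps}-\bar{u}|\SP|u^{\eps}_t-\bar{u}_t|\leq\tfrac{\eps\bar{c}}{2}|u^{\eps}_t-\bar{u}_t|^2+\tfrac{2\SP\eps\alpha^2L^2}{\bar{c}}|u^{\eps}-\bar{u}|^2,
\end{equation*}
the first term is absorbed into the left-hand dissipation and the second is bounded by $C\SP\Psi(t)$ (using that $\eps$ stays bounded). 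This is the step I expect to be the main obstacle: it succeeds only because of the matching $\eps$-scaling, and it is exactly where the relative-potential construction of Theorem \ref{WDESTTHM} gets replaced by a direct absorption. Crucially, no residual $\mathcal{O}(\eps)$ source error is produced, which is what allows the sharper $\mathcal{O}(\eps^2)$ rate here.

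Collecting the three estimates into \eqref{GFUNCEVOLPiii} should yield $\tfrac{d}{dt}\int_{\RR}\mathcal{G}(\bar{u},u^{\eps})\,dx\leq C\bigl(\Psi(t)+\eps^2\bigr)$. Finally I would invoke the equivalence \eqref{TP1a} between $\int_{\RR}\mathcal{G}\,dx$ and $\Psi$, integrate in time, and apply the Gronwall lemma to obtain $\Psi(t)\leq C\bigl(\Psi(0)+\eps^2\bigr)$, which is \eqref{ESTG}.
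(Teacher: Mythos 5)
Your proposal is correct and takes essentially the same approach as the paper's proof: both start from \eqref{GFUNCEVOLPiii} of Lemma \ref{GESTLMM}, bound $d_1,d_2$ pointwise by $C\SP|u^{\eps}-\bar{u}|^2$ using (H3-b) together with \eqref{RXENTPROP} and $(a2)$, treat $2\SP\eps\alpha\SP d_3$ by Young's inequality, and conclude via \eqref{TP1a}, time integration, and Gronwall. The only immaterial difference is in the Young splitting for $d_3$: you absorb the $\eps\SP|u^{\eps}_t-\bar{u}_t|^2$ piece into the left-hand dissipation, whereas the paper bounds $\eps\SP d_3 \leq L_{\mathcal{A}}^2|u^{\eps}-\bar{u}|^2+\eps^2|(u^{\eps}-\bar{u})_t|^2$ and controls the latter term directly by $\Psi(t)$, since $\Psi$ already contains $\eps^2|(u^{\eps}-\bar{u})_t|^2$.
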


\begin{proof}
Let $\mathcal{A}\subset \RR^n$ denote a set such that $\bar{u}(x,t) \in \mathcal{A}$ for every $(x,t)$. Then \eqref{RXENTPROP}, \eqref{WDSRC}, \eqref{POTN} and \eqref{ERRT2} imply
\begin{equation}\label{DESTforP4}
\begin{aligned}
d_1    & \SP \leq \SP |u^{\eps}-\bar{u}| |G(u^{\eps})-G(\bar{u})| \leq L_{\mathcal{A}}|u^{\eps}-\bar{u}|^{2}  \\
d_2  &\SP \leq \SP \alpha\BBN{G(\bar{u})}_{L^{\infty}}  |u^{\eps}-\bar{u}|^2 \\
\eps d_3 & \SP \leq  \SP \eps L_{\mathcal{A}} |u^{\eps}-\bar{u}| |(u^{\eps}-\bar{u})_t| \leq L_{\mathcal{A}}^2 |u^{\eps}-\bar{u}|^2 + \eps^2|(u^{\eps}-\bar{u})_t|^2\,.
\end{aligned}
\end{equation}
Then, combining \eqref{PSIDEF}, \eqref{GFUNCEVOLPiii} and \eqref{DESTforP4},  we obtain
\begin{equation*}
\begin{aligned}
\frac{d}{dt} & \int_{\RR} \, \mathcal{G}(\bar{u},u^{\eps})\, dx + \eps \bar{c}
\int_{\RR} \, |u^{\eps}_x-\bar{u}_x|^2 + |u^{\eps}_t  - \bar{u}_t|^2 \, dx \leq \, C \Big( \Psi(t)+ \eps^ 2\Big)\,. \end{aligned}
\end{equation*}
Integrating the above inequality, and using \eqref{TP1a}, we obtain
\begin{equation*}
\begin{aligned}
\Psi(t) \leq   C \Big(\SP
\Psi(0) + \int_{0}^t \SP \Psi(s) \,ds + \eps^2 \Big)
\end{aligned}
\end{equation*}
and conclude \eqref{ESTG} via the Gronwall lemma.
\end{proof}

\section{Multidimensional case} \label{S6}

In this section we state our results for multidimensional systems. The proofs of these theorems follow similar line of argument as the ones presented in the earlier parts of this article, and therefore are here omitted.

We define
\begin{align}
    \varphi(t)&:=\int_{\RR^d} |u^{\eps}|^2 + \eps^2|\D u^{\eps}|^2 + \eps^2|u^{\eps}_t|^2 \,dx \label{PHIDEFMD} \\
    \Psi(t)&:=\int_{\RR} \, |\bar{u}-u^{\eps}|^2 + \eps^2|\D \bar{u}-\D u^{\eps}|^2 + \eps^2|(\bar{u}-u^{\eps})_t|^2\,dx\, \label{PSEDEFMD}
\end{align}
which are used in the next two subsections.

\subsection{Systems with weakly dissipative source $G$, $d \geq 1$}

The first result is the analog of Proposition \ref{w-diss2} on stability.
\begin{proposition}\label{w-dissMD}

Let $\{u^{\eps}(x,t)\}$ be a family of smooth solutions to the system \eqref{RXBLAWMDIM} on $\RR^d \times [0,T]$. Suppose that $u \equiv u^{\eps}$ decays fast at infinity and that:
\begin{itemize}
\item[(a1)]  \eqref{RXENTPROP} holds true, and the positive definite, symmetric matrices $A_j$, $j=1,\dots,d$ are such that the subcharacteristic condition \eqref{AHYPSCENTMD} is valid.
\item[(a2)]

The conditions \eqref{WDSRC} and  \eqref{POTN} for the source $G$ hold true.
 \end{itemize}

\noindent Then for all $t\in[0,T]$
\begin{equation}\label{PHIESTENTWMD}
  \varphi(t) + \eps \!\int_{\RR^d} R(u^{\eps}(x,t)) \,dx + \int_{0}^{t}\int_{\RR^d} |\D{\eta}(u)G(u)| \SP dx dt \, \leq \, C \Big( \varphi(0) + \eps \! \int_{\RR^d} R(u^{\eps}(x,0)) \,dx \Big)
\end{equation}
with $\varphi(t)$ defined in \eqref{PHIDEFMD}, $R(u)$ defined in \eqref{POTN}  and $C=C(A,\alpha,\beta)>0$ independent of $\eps$,  $T$.
\end{proposition}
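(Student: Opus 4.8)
The plan is to mirror the one-dimensional argument of Proposition \ref{w-diss2}, replacing the energy identity \eqref{STABIDSCENT1D} by its multidimensional counterpart for the second-order system \eqref{RXBLAWMDIM}. First I would multiply \eqref{RXBLAWMDIM} by $u_t^{\top}$; after integrating by parts in each $x_j$ and using the symmetry of the $A_j$, this gives the exact analog of \eqref{TID1SCENT},
$$\del_t\Bigl(\tfrac{1}{2}\eps|u_t|^2+\tfrac{1}{2}\eps\sum_j u_{x_j}^{\top} A_j u_{x_j}\Bigr)+|u_t|^2+\sum_j u_t^{\top}\D F_j(u)u_{x_j}=\eps\sum_j\del_{x_j}\bigl(u_t^{\top} A_j u_{x_j}\bigr)+u_t^{\top} G(u).$$
Multiplying \eqref{RXBLAWMDIM} instead by $\D\eta(u)$ and invoking the entropy compatibility in \eqref{RXENTPROP} produces the analog of \eqref{TID2SCENT}, with $\del_x q$ replaced by $\sum_j\del_{x_j}q_j$ and the spatial dissipation replaced by $\eps\sum_j u_{x_j}^{\top}\D^2\eta(u)A_j u_{x_j}$. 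I would then multiply the first identity by $2\alpha\eps$, add the second, and apply the Chapman--Enskog expansion \eqref{CHEXPENT} (which involves only $u$ and $u_t$ and is therefore dimension-independent) to package the time-derivative terms into $\eta(u+\eps u_t)$ plus a correction governed by $\overline{\D^2\eta}$.

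The crucial algebraic point is the completion of the square: the single vector $u_t$ pairs with the full sum $\sum_j\D F_j(u)u_{x_j}$, so that $2\alpha\eps|u_t|^2+2\alpha\eps\sum_j u_t^{\top}\D F_j(u)u_{x_j}$ reassembles as $\eps\alpha|u_t+\sum_j\D F_j(u)u_{x_j}|^2+\eps\alpha|u_t|^2-\eps\alpha|\sum_j\D F_j(u)u_{x_j}|^2$. After symmetrizing $u_{x_j}^{\top}\D^2\eta A_j u_{x_j}=\tfrac{1}{2}u_{x_j}^{\top}(\D^2\eta A_j+A_j\D^2\eta)u_{x_j}$, the indefinite spatial contribution becomes precisely the quadratic form controlled by \eqref{AHYPSCENTMD} and is bounded below by $\eps\nu\sum_j|u_{x_j}|^2$; meanwhile $\eps u_t^{\top}(\alpha I-\D^2\eta)u_t\ge\tfrac{1}{2}\eps\alpha|u_t|^2$ by the bound $\D^2\eta\le\tfrac{1}{2}\alpha I$ in \eqref{RXENTPROP}, and the Chapman--Enskog average obeys the two-sided bound \eqref{ALPHADOM}. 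This is where the dimension $d\ge1$ enters nontrivially, and I expect the bookkeeping that matches the assembled symmetrized spatial form to the subcharacteristic hypothesis \eqref{AHYPSCENTMD} to be the main obstacle.

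It then remains to dispose of the source contributions $\D\eta(u)G(u)+2\alpha\eps\,u_t^{\top}G(u)$. By the weak-dissipation hypothesis \eqref{WDSRC} together with $G(0)=0$ and $\D\eta(0)=0$ one has $\D\eta(u)G(u)=\bigl(\D\eta(u)-\D\eta(0)\bigr)\bigl(G(u)-G(0)\bigr)\le0$, so $\D\eta(u)G(u)=-|\D\eta(u)G(u)|$ moves to the left as a nonnegative dissipation. The term $2\alpha\eps\,u_t^{\top}G(u)$ is converted by the potential structure \eqref{POTN}: since $G=-\D R^{\top}$, one has $u_t^{\top}G(u)=-\del_t R(u)$, contributing $2\alpha\eps R(u)$ to the energy functional upon integration. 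Integrating the assembled identity over $\RR^d\times[0,t]$, discarding the spatial divergence terms by the fast decay at infinity, and invoking the energy equivalence (the multidimensional analog of \eqref{PHIEQUIVENT}, identifying the integral of $\eta(u+\eps u_t)+\tfrac{1}{2}\alpha\eps^2|u_t|^2+\eps^2\alpha\sum_j u_{x_j}^{\top}A_j u_{x_j}$ with $\varphi(t)$ via $\beta I\le\D^2\eta\le\tfrac{1}{2}\alpha I$ and positive-definiteness of the $A_j$) then yields \eqref{PHIESTENTWMD} directly, with no Gronwall step required since every source contribution has been fully absorbed.
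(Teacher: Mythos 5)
Your proposal is correct and is essentially the proof the paper intends: the paper omits the multidimensional proofs, stating only that they "follow similar line of argument" as the one-dimensional case, and your argument is precisely that adaptation of Lemma \ref{energy} and Proposition \ref{w-diss2} — multiplying \eqref{RXBLAWMDIM} by $u_t^{\top}$ and $\D\eta(u)$, combining with weight $2\alpha\eps$, completing the square so that $u_t$ pairs with $\sum_j \D F_j(u)u_{x_j}$, invoking \eqref{AHYPSCENTMD} for the spatial form and \eqref{WDSRC}, \eqref{POTN} to absorb the source, with no Gronwall step needed. Your identification of the symmetrized form $\tfrac12 u_{x_j}^{\top}(\D^2\eta\, A_j+A_j\D^2\eta)u_{x_j}$ with the quadratic form in \eqref{AHYPSCENTMD} (read with the evident intended indexing) is exactly the point where the hypothesis (H2*) is designed to enter.
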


\par\smallskip

The next theorem is the analog of compactness Theorem \ref{COMP-LIPS-1D}
\begin{theorem}
Let $d \geq 1$. Let $\{u_\eps \}$ be a family of smooth solutions of \eqref{RXBLAW1D}  on $\RR^d  \times [0, T]$ emanating from smooth initial data. The family $\{u_\eps\}$ is assumed to decay fast at infinity. Let the hypotheses of Proposition \ref{w-dissMD} remain valid so that the stability estimate \eqref{PHIESTENTWMD}  holds true with  $\eta-q$ entropy-entropy flux pair satisfying \eqref{RXENTPROP}, and $A_j$, $j=1,\dots,d,$ symmetric, positive-definite matrices subject to \eqref{AHYPSCENTMD}.
Then for an entropy pair $\bar{\eta}-\bar{q}$ satisfying the growth conditions
\begin{equation*}\|\bar{\eta}\|_{L^{\infty}}, \|\bar{q}\|_{L^{\infty}}, \|\D\bar{\eta}\|_{L^{\infty}}, \|\D^2\bar{\eta}\|_{L^{\infty}} \le C
\end{equation*}
and
\begin{equation*}
  |\D \bar{\eta}(v) G(v)| \leq C\big(M-\D \eta(v) G(v)\big)\,, \quad \forall v\in\RR^n
\end{equation*}
the family
\begin{equation*}
\Big\{\del_t \bar{\eta}(u^{\eps}) +  \sum_{j=1}^d \del_{x_j} \bar{q}_j(u^{\eps})\Big\}_{\eps}\,\, \mbox{lies in a compact set of }\,\, H^{-1}_{loc}(\RR^d \times [0,T]).
\end{equation*}
\end{theorem}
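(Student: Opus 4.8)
The plan is to mirror the one-dimensional argument of Theorem \ref{COMP-WD-1D} (whose hypotheses—weak dissipation through Proposition \ref{w-dissMD} together with the projection-growth bound $|\D\bar\eta(v)G(v)|\le C(M-\D\eta(v)G(v))$—are exactly the ones imposed here), working on an arbitrary bounded open set $\mathcal{O}\subset\RR^d\times[0,T]$ and reducing to entropy pairs $\bar\eta-\bar q$ obeying the displayed $L^\infty$ bounds. Using the entropy relation $\D\bar q_j=\D\bar\eta\SP\D F_j$ together with the relaxation equation \eqref{RXBLAWMDIM}, I would first rewrite
\[
\begin{aligned}
\del_t \bar{\eta}(u^{\eps}) + \sum_{j=1}^d \del_{x_j}\bar{q}_j(u^{\eps})
&= \D\bar{\eta}(u^{\eps})\Big(u^{\eps}_t + \sum_{j=1}^d \del_{x_j}F_j(u^{\eps})\Big)\\
&= \D\bar{\eta}(u^{\eps})\Big(G(u^{\eps}) + \eps\sum_{j=1}^d A_j u^{\eps}_{x_j x_j} - \eps\SP u^{\eps}_{tt}\Big),
\end{aligned}
\]
and then integrate the two second-order terms by parts to obtain the $d$-dimensional counterpart of the splitting in Theorem \ref{COMP-WD-1D}: $\del_t\bar\eta+\sum_j\del_{x_j}\bar q_j=I_1+I_2+I_3+I_4+I_5$ with $I_1=\sum_j\eps\del_{x_j}\!\big(\D\bar\eta(u^{\eps})A_j u^{\eps}_{x_j}\big)$, $I_2=-\eps\del_t\!\big(\D\bar\eta(u^{\eps})u^{\eps}_t\big)$, $I_3=-\eps\sum_j (u^{\eps}_{x_j})^{\top}\D^2\bar\eta(u^{\eps})A_j u^{\eps}_{x_j}$, $I_4=\eps(u^{\eps}_t)^{\top}\D^2\bar\eta(u^{\eps})u^{\eps}_t$ and $I_5=\D\bar\eta(u^{\eps})G(u^{\eps})$.

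For the divergence terms $I_1,I_2$ I would exploit the dissipation produced by the multidimensional energy identity underlying Proposition \ref{w-dissMD} (the analog of \eqref{STABIDSCENT1D}), which, beyond the uniform bound $\varphi(t)\le C$ of \eqref{PHIESTENTWMD}, furnishes the spacetime estimate $\eps\int_0^T\!\int_{\RR^d}\big(|\D u^{\eps}|^2+|u^{\eps}_t|^2\big)\,dx\,dt\le C$. Writing $I_1=\sqrt{\eps}\sum_j\del_{x_j}\!\big(\sqrt{\eps}\,\D\bar\eta(u^{\eps})A_j u^{\eps}_{x_j}\big)$ and using $\|\D\bar\eta\|_{L^\infty}\le C$, the inner field $\sqrt{\eps}\,\D\bar\eta(u^{\eps})A_j u^{\eps}_{x_j}$ is bounded in $L^2(\mathcal{O})$, so $\BBN{I_1}_{H^{-1}(\mathcal{O})}\le C\sqrt{\eps}\to 0$; the identical half-power-of-$\eps$ scaling gives $\BBN{I_2}_{H^{-1}(\mathcal{O})}\to 0$. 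Hence $I_1+I_2$ lies in a compact (indeed convergent-to-zero) subset of $H^{-1}(\mathcal{O})$. I would stress here that the crude uniform bound on $\eps\,\D u^{\eps}$ in $L^2$ only makes $I_1,I_2$ bounded in $H^{-1}$; it is precisely the sharper $\sqrt{\eps}$-dissipation that upgrades them to a compact set, and securing this sharper estimate (not literally displayed in \eqref{PHIESTENTWMD} but present in the energy machinery) is the first delicate point.

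The quadratic terms $I_3,I_4$ obey $\BBN{I_3}_{L^1(\mathcal{O})}+\BBN{I_4}_{L^1(\mathcal{O})}\le C\eps\int_0^T\!\int_{\RR^d}\big(|\D u^{\eps}|^2+|u^{\eps}_t|^2\big)\,dx\,dt\le C$ by the same dissipation estimate and $\|\D^2\bar\eta\|_{L^\infty}\le C$, so they are bounded in $L^1(\mathcal{O})\subset M(\mathcal{O})$. For the source term $I_5$—the second, and genuinely source-specific, obstacle—I would combine the weak-dissipation bound $\int_0^T\!\int_{\RR^d}|\D\eta(u^{\eps})G(u^{\eps})|\le C$ from \eqref{PHIESTENTWMD} with the projection-growth hypothesis, giving $\int_{\mathcal{O}}|\D\bar\eta(u^{\eps})G(u^{\eps})|\le C\big(M|\mathcal{O}|+\int_0^T\!\int_{\RR^d}|\D\eta(u^{\eps})G(u^{\eps})|\big)\le C$, so that $I_5$ too is bounded in $L^1(\mathcal{O})\subset M(\mathcal{O})$. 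Finally, since $\|\bar\eta\|_{L^\infty},\|\bar q_j\|_{L^\infty}\le C$, the functions $\bar\eta(u^{\eps}),\bar q_j(u^{\eps})$ are bounded in $L^\infty(\mathcal{O})\subset L^p(\mathcal{O})$ for every $p$, whence $\del_t\bar\eta+\sum_j\del_{x_j}\bar q_j$ is bounded in $W^{-1,p}(\mathcal{O})$ for some $p>2$. With $I_1+I_2$ compact in $H^{-1}(\mathcal{O})$ and $I_3+I_4+I_5$ bounded in $M(\mathcal{O})$, Murat's Lemma \ref{FM} then yields the asserted compactness in $H^{-1}_{loc}(\RR^d\times[0,T])$.
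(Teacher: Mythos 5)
Your proof is correct and follows essentially the same route as the paper's: the paper omits the multidimensional proof, stating only that it follows the argument of Theorem \ref{COMP-WD-1D}, and your decomposition $I_1+\dots+I_5$, the $L^1$ bounds on $I_3,I_4$ via the dissipation estimate and on $I_5$ via the weak-dissipation bound in \eqref{PHIESTENTWMD} combined with the projection-growth hypothesis, and the concluding application of Murat's Lemma \ref{FM} are exactly that argument transplanted to $d\ge 1$. Your explicit appeal to the space-time dissipation bound $\eps\int_0^T\!\int_{\RR^d}\bigl(|\D u^{\eps}|^2+|u^{\eps}_t|^2\bigr)\,dx\,dt\le C$ (coming from the energy identity behind Proposition \ref{w-dissMD} rather than from the displayed stability estimate itself), which yields the $\sqrt{\eps}$ decay of $I_1,I_2$ in $H^{-1}$ and the uniform $L^1$ control of $I_3,I_4$, is in fact necessary to justify the paper's terser claims, so your write-up is a faithful and slightly more careful rendering of the intended proof.
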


\par\smallskip

The next theorem is the analog of Theorem \ref{WDESTTHM} on convergence.
\begin{theorem}\label{WDESTTHMMD}
Let $\bar{u}(x,t)$ be a smooth solution of the system \eqref{BLAW1D}, defined on $\RR^d \times [0,T]$. Let $\{u^{\eps}\}$ be a family of smooth solutions of the relaxation system \eqref{RXSYST1D} on $\RR^d \times [0,T]$. Suppose that both $\bar{u}$ and $u^{\eps}$ decay sufficiently fast at infinity and that:
\begin{itemize}
\item[(a1)]  (H1) holds true and  the positive definite, symmetric matrices $A_j$, $j=1,\dots,d$ are such that the subcharacteristic condition \eqref{AHYPSCENTMD} is valid.
\item[$(a2)$] For some $M>0$
\begin{equation*}
|D^2 F_j(u)| \leq M, \; j=1,\dots, d, \quad |D^3\eta(u)| \leq M, \quad u\in \RR^n.
\end{equation*}
\item[(a3)] 
The conditions \eqref{WDSRC} and \eqref{POTN}  for the source term $G$ hold true.
 \end{itemize}

\noindent Then for all $t\in[0,T]$
\begin{equation*}
  \Psi(t) + \eps \!\int_{\RR^d} R(u^{\eps}(x,t)) \SP dx \, \leq \, C \bigg( \Psi(0) + \eps \!\int_{\RR^d} R(u^{\eps}(x,0)) \SP dx  + \eps \bigg)
\end{equation*}
with $\Psi$ defined in \eqref{PSEDEFMD}, $R(u)$ defined in \eqref{POTN} and $C=C(\bar{u},\alpha,\beta,\nu,M,T)>0$ independent of $\eps$.

\par\bigskip

If, in addition, $G\in C^2(\RR^n)$ then for all $t\in[0,T]$
\begin{equation*}
  \Psi(t) + \eps \!\int_{\RR^d} \big[R^{rel}(\bar{u},u^{\eps}) \big](x,t) \, dx \, \leq \, C \bigg( \Psi(0) + \eps \!\int_{\RR^d} \big[R^{rel}(\bar{u},u^{\eps})\big](x,0) \, dx + \eps^2 \bigg)
\end{equation*}
where
\begin{equation*}
   R^{rel}(\bar{u},u^{\eps}):= R(u^{\eps}) - R(\bar{u})  - \D R(\bar{u})(u-\bar{u}) \geq 0 \,.
\end{equation*}
\end{theorem}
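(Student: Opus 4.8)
The plan is to mirror the one-dimensional argument of Theorem \ref{WDESTTHM} step by step, the only genuinely new ingredient being the treatment of the cross-derivative terms generated by the multidimensional wave operator $\sum_{j=1}^d A_j u_{x_j x_j} - u_{tt}$ in \eqref{RXBLAWMDIM}. First I would establish the multidimensional analogue of the relative entropy identity of Lemma \ref{RENTIDENTITY}. Writing $w := u^{\eps} - \bar{u}$, I would multiply the difference of the balance law \eqref{BLAW1D} and the second-order system \eqref{RXBLAWMDIM} successively by $\D\eta(u^{\eps})$, by $\D\eta(\bar{u})$, and by $w_t$, and combine the resulting identities through the Chapman--Enskog expansion \eqref{CHEXPENT}. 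The outcome is an identity whose left-hand side is $\del_t \mathcal{G}(\bar{u},u^{\eps}) + \sum_{j=1}^d \del_{x_j} Q_j^{rel}$ plus dissipation, balanced on the right by a spatial divergence, the geometric error terms, and the source contributions $d_1 + d_2 + 2\eps\alpha d_3$ of \eqref{ERRT2}; here $\mathcal{G}$ is built from $H^{rel}$ together with the time term $\eps^2 w_t^{\top}(\alpha I - \overline{\D^2\eta})w_t$ and the spatial term $\eps^2\alpha\sum_{j=1}^d w_{x_j}^{\top}A_j w_{x_j}$, and $d_1,d_2,d_3$ are exactly as in \eqref{ERRT2} since $G$ depends only on the state.

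The dissipation on the left is $\eps w_t^{\top}(\alpha I - \D^2\eta(u^{\eps}))w_t$ together with the completion-of-square term $\eps\alpha\bigl|w_t + \sum_{j=1}^d \D F_j(u^{\eps}) w_{x_j}\bigr|^2$ and the spatial form $\eps\bigl(\tfrac12\sum_j w_{x_j}^{\top}(A_j\D^2\eta + \D^2\eta A_j)w_{x_j} - \alpha\bigl|\sum_j \D F_j(u^{\eps})^{\top}w_{x_j}\bigr|^2\bigr)$. This last expression is precisely the quadratic form of the subcharacteristic condition \eqref{AHYPSCENTMD}; applying (H2*) with $\xi_j = w_{x_j}$ bounds it below by $\nu\sum_j|w_{x_j}|^2$. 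This is the key step and the \emph{main obstacle}: unlike the scalar-direction case, the cross term $\bigl|\sum_j \D F_j^{\top} w_{x_j}\bigr|^2$ contains genuinely new off-diagonal couplings $w_{x_j}^{\top}\D F_j \D F_k^{\top} w_{x_k}$ for $j \neq k$, and the role of \eqref{AHYPSCENTMD} is exactly to absorb these simultaneously with the diagonal viscosity furnished by the matrices $A_j$. Establishing the identity therefore reduces to checking that the flux cross-terms assemble into this single controlled form, after which the lower bound is immediate.

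With the identity in hand, the preliminary estimate of Lemma \ref{GESTLMM} generalizes verbatim: using (H1) and the bound \eqref{ALPHADOM2} one obtains the equivalence $c_1\Psi(t) \le \int_{\RR^d}\mathcal{G}\,dx \le c_2\Psi(t)$ with $\Psi$ as in \eqref{PSEDEFMD}, and the evolution inequality \eqref{GFUNCEVOLPiii} now reads with $\int_{\RR^d}$ and spatial dissipation $\eps\bar{c}\int(\sum_j|w_{x_j}|^2 + |w_t|^2)\,dx$. The error terms $a_1,a_2,b_1,b_2,c_1,c_2$ acquire sums over $j$ (for instance $c_1 = \eps(\sum_j A_j\bar{u}_{x_jx_j} - \bar{u}_{tt})^{\top}w_t$, reflecting the $d$-dimensional wave operator), but each is controlled by the same Cauchy--Schwarz and Young estimates in terms of norms of $\bar{u}$, producing the right-hand side $C(T,\bar{u})(\Psi(t) + \eps^2)$.

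Finally, the source is treated exactly as in Theorem \ref{WDESTTHM}. For the order-$\eps$ bound I would use weak dissipation \eqref{WDSRC} to get $d_1 \le 0$, estimate $d_2 \le \alpha\|G(\bar{u})\|_{L^{\infty}}|w|^2$, and control $\eps d_3$ through the potential \eqref{POTN} as in \eqref{DESTforP3}, yielding a Gronwall inequality for $\Psi(t) + \eps\int_{\RR^d} R(u^{\eps})\,dx$. For the sharpened order-$\eps^2$ estimate under $G \in C^2$, the relative potential $R^{rel}$ and the computation \eqref{DTRREL}--\eqref{d3viaRREL} are purely pointwise in the state variable and hence carry over unchanged; the ball-splitting argument \eqref{ballboundsol}--\eqref{GRELEST} likewise depends only on compactness of the range of $\bar{u}$ and on $G \in C^2$, both of which are dimension independent. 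Integrating and invoking Gronwall's lemma then delivers the two stated estimates.
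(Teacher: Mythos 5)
Your proposal is correct and is exactly what the paper intends: the paper omits the proof of Theorem \ref{WDESTTHMMD}, stating only that it ``follows similar line of argument'' as the one-dimensional case, and your reconstruction mirrors Lemma \ref{RENTIDENTITY}, Lemma \ref{GESTLMM}, and Theorem \ref{WDESTTHM} step by step, with the subcharacteristic condition \eqref{AHYPSCENTMD} applied to $\xi_j = (u^{\eps}-\bar{u})_{x_j}$ doing precisely the work of absorbing the new cross-derivative couplings. You also correctly observe that the source terms $d_1,d_2,d_3$ and the relative-potential argument \eqref{DTRREL}--\eqref{GRELEST} are pointwise in the state variable and hence dimension independent, which is the reason the paper can defer to the $d=1$ proof.
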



\subsection{Systems with general source $G$, $d \geq 1$}

\begin{proposition}\label{Lgen-source-MD}
Let $\{u^{\eps}(x,t)\}$ be a family of smooth solutions to \eqref{RXBLAWMDIM} on $\RR^d \times [0,T]$. Suppose that  $u \equiv u^{\eps}$ decays fast at infinity and that:

\begin{itemize}
\item[(a1)]  (H1) holds true, and  positive definite, symmetric matrices $A_j$, $j=1,\dots,d$ are such that the subcharacteristic condition \eqref{AHYPSCENTMD} is valid.
\item[(a2)] The condition \eqref{LISRC} on the source $G$ holds true.
 \end{itemize}

\noindent Then
\begin{equation}\label{PHIESTGSRCMD}
  \varphi(t)  \SP \leq \SP  C\SP \varphi(0) \,, \quad t\in[0,T]
\end{equation}
with $\varphi$ defined in \eqref{PHIDEFMD} and $C=C(A,\alpha,\beta,T,L)>0$ independent of $\eps$ and $T$.
\end{proposition}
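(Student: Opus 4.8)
The plan is to reproduce, in the multidimensional setting, the three-step scheme used for its one-dimensional counterpart Proposition \ref{Lgen-source-2}: first establish an energy identity, then extract a sign from the subcharacteristic condition, and finally close the estimate by Gronwall's lemma. Throughout, the single spatial derivative $u_x$ and matrix $A$ are replaced by the families $\{u_{x_j}\}_{j=1}^d$ and $\{A_j\}_{j=1}^d$, and the scalar hypothesis \eqref{AHYPSCENT1D} is replaced by \eqref{AHYPSCENTMD}.

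First I would derive the multidimensional analog of the energy identity of Lemma \ref{energy}. Multiplying \eqref{RXBLAWMDIM} by $u_t^{\top}$ produces, modulo spatial divergences, the time derivative of $\tfrac{1}{2}\eps|u_t|^2+\tfrac{1}{2}\eps\sum_j u_{x_j}^{\top}A_j u_{x_j}$ together with the terms $|u_t|^2+u_t^{\top}\sum_j \D F_j(u)u_{x_j}$ and the source contribution $u_t^{\top}G(u)$. Multiplying \eqref{RXBLAWMDIM} by $\D\eta(u)$ and invoking the entropy compatibility in \eqref{RXENTPROP} yields the balance of $\eta(u)+\eps\D\eta(u)u_t$ with flux $\sum_j q_j(u)$ and source $\D\eta(u)G(u)$. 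Combining the second identity with $2\alpha\eps$ times the first and using the Chapman--Enskog expansion \eqref{CHEXPENT} (with $\bar{\alpha}=\alpha$), the crucial point is that the indefinite cross terms assemble into a \emph{single} perfect square, so that the dissipative part of the resulting identity reads
\begin{equation*}
\eps\alpha\Bigl|u_t+\sum_{j=1}^d\D F_j(u)u_{x_j}\Bigr|^2+\eps\SP u_t^{\top}\bigl(\alpha I-\D^2\eta(u)\bigr)u_t+\eps\biggl[\sum_{j=1}^d u_{x_j}^{\top}\D^2\eta(u)A_j u_{x_j}-\alpha\Bigl|\sum_{j=1}^d\D F_j(u)u_{x_j}\Bigr|^2\biggr].
\end{equation*}

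Second, I would integrate over $\RR^d$; the divergence terms vanish by the fast-decay hypothesis. By \eqref{RXENTPROP} we have $\D^2\eta(u)\leq\tfrac{1}{2}\alpha I$, hence $\eps\SP u_t^{\top}(\alpha I-\D^2\eta(u))u_t\geq 0$, while the bracketed spatial form is bounded below by $\eps\nu\sum_j|u_{x_j}|^2\geq 0$ precisely upon applying \eqref{AHYPSCENTMD} with $\xi_j=u_{x_j}$ (note that $u_{x_j}^{\top}\D^2\eta(u)A_j u_{x_j}$ equals the symmetrized quadratic form appearing there). Discarding these nonnegative terms and using the equivalence of $\int_{\RR^d}\bigl(\eta(u+\eps u_t)+\tfrac{1}{2}\alpha\eps^2|u_t|^2+\eps^2\alpha\sum_j u_{x_j}^{\top}A_j u_{x_j}\bigr)dx$ with $\varphi(t)$, this yields
\begin{equation*}
c_1\SP\varphi(\tau)\leq c_2\SP\varphi(0)+\int_0^{\tau}\!\!\int_{\RR^d}\Bigl(\D\eta(u)G(u)+2\eps\alpha\SP u_t^{\top}G(u)\Bigr)\,dx\,dt,\quad\tau\in[0,T],
\end{equation*}
exactly mirroring \eqref{TSTESTSCENTGS1}, with $c_1,c_2$ depending on $\alpha,\beta$ and the $A_j$.

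Finally, since $G(0)=0$, the Lipschitz bound \eqref{LISRC} gives $|G(u)|\leq L|u|$, so by \eqref{RXENTPROP} the integrand is dominated pointwise by $\alpha L|u|^2+\alpha\eps^2|u_t|^2+\alpha L^2|u|^2$; thus the source integral is bounded by $c\int_0^{\tau}\varphi(t)\,dt$. This produces $\varphi(\tau)\leq c\bigl(\varphi(0)+\int_0^{\tau}\varphi(t)\,dt\bigr)$, and Gronwall's lemma gives \eqref{PHIESTGSRCMD}. The one genuinely new difficulty relative to the scalar case is the bookkeeping of the single square $\bigl|u_t+\sum_j\D F_j(u)u_{x_j}\bigr|^2$: unlike the one-dimensional identity, the several directions $j$ couple only through the \emph{single} multiplier $u_t$, and one must verify that \eqref{AHYPSCENTMD} is exactly the coercivity hypothesis that keeps the residual spatial quadratic form nonnegative once these cross terms (including the inter-directional products inside $\bigl|\sum_j\D F_j(u)u_{x_j}\bigr|^2$) have been absorbed.
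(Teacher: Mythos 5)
Your proposal is correct and follows exactly the route the paper intends: the paper omits this proof, stating only that it "follows similar line of argument" as the one-dimensional case, and your argument is precisely that generalization — the multidimensional analogs of Lemma \ref{energy} and of the estimate \eqref{TSTESTSCENTGS1}, followed by the Lipschitz bound $|G(u)|\leq L|u|$ and Gronwall. You also correctly identify the one point where the multidimensional structure genuinely matters, namely that the completed square couples the directions through $\bigl|\sum_j \D F_j(u)u_{x_j}\bigr|^2$ and that \eqref{AHYPSCENTMD} with $\xi_j=u_{x_j}$ is exactly the coercivity needed to absorb those cross terms.
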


\par\medskip

\begin{theorem}
Let $d \geq 1$. Let $\{u_\eps \}$ be a family of smooth solutions of \eqref{RXBLAW1D}  on $\RR^d  \times [0, T]$ emanating from smooth initial data. The family $\{u_\eps\}$ is assumed to decay fast at infinity. Let the hypotheses of Proposition \ref{Lgen-source-MD} remain valid so that the stability estimate \eqref{PHIESTGSRCMD}  holds true with  $\eta-q$ entropy-entropy flux pair satisfying \eqref{RXENTPROP}, and $A_j$, $j=1,\dots,d,$ symmetric, positive-definite matrices subject to \eqref{AHYPSCENTMD}.
Then for an entropy pairs $\bar{\eta}-\bar{q}$ satisfying
\begin{equation*}\|\bar{\eta}\|_{L^{\infty}}, \|\bar{q}\|_{L^{\infty}}, \|\D\bar{\eta}\|_{L^{\infty}}, \|\D^2\bar{\eta}\|_{L^{\infty}} \le C
\end{equation*}
the family
\begin{equation*}
\Big\{\del_t \bar{\eta}(u^{\eps}) +  \sum_{j=1}^d \del_{x_j} \bar{q}_j(u^{\eps})\Big\}_{\eps}\,\, \mbox{lies in a compact set of }\,\, H^{-1}_{loc}(\RR^d \times [0,T]).
\end{equation*}
\end{theorem}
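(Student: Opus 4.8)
The plan is to follow the one-dimensional argument of Theorem \ref{COMP-LIPS-1D} directionwise, invoking the multidimensional stability Proposition \ref{Lgen-source-MD} in place of Proposition \ref{Lgen-source-2}. First I would multiply the second-order relaxation equation \eqref{RXBLAWMDIM} by $\D\bar\eta(u^{\eps})$ and use the entropy relations $\D\bar\eta\,\D F_j=\D\bar{q}_j$, $j=1,\dots,d$, to recast the left-hand side as $\del_t\bar\eta(u^{\eps})+\sum_{j=1}^{d}\del_{x_j}\bar{q}_j(u^{\eps})$. Expanding each wave contribution by the product rule, for every $j$ one has
\[
\eps\,\D\bar\eta(u^{\eps})A_j u^{\eps}_{x_jx_j}=\eps\,\del_{x_j}\!\bigl(\D\bar\eta(u^{\eps})A_j u^{\eps}_{x_j}\bigr)-\eps\,{u^{\eps}_{x_j}}^{\!\top}\D^2\bar\eta(u^{\eps})A_j u^{\eps}_{x_j},
\]
and similarly $-\eps\,\D\bar\eta(u^{\eps})u^{\eps}_{tt}=-\eps\,\del_t\bigl(\D\bar\eta(u^{\eps})u^{\eps}_t\bigr)+\eps\,{u^{\eps}_t}^{\!\top}\D^2\bar\eta(u^{\eps})u^{\eps}_t$. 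This yields the decomposition $\del_t\bar\eta(u^{\eps})+\sum_{j}\del_{x_j}\bar{q}_j(u^{\eps})=I_1+I_2+I_3+I_4+I_5$, with $I_1,I_2$ the divergence- and time-derivative terms, $I_3,I_4$ the quadratic remainders, and $I_5=\D\bar\eta(u^{\eps})G(u^{\eps})$, exactly mirroring the proof of Theorem \ref{COMP-LIPS-1D}.

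The decisive ingredient, and the one genuine obstacle, is the space-time dissipation estimate. Integrating the energy identity underlying Proposition \ref{Lgen-source-MD} (the $d\ge1$ analog of \eqref{STABIDSCENT1D}) and using the subcharacteristic condition \eqref{AHYPSCENTMD} together with the upper bound $\D^2\eta\le\tfrac12\alpha{\bf I}$ from \eqref{RXENTPROP}, one extracts
\[
\int_0^T\!\!\int_{\RR^d}\Bigl(\eps\nu\sum_{j=1}^{d}|u^{\eps}_{x_j}|^2+\tfrac12\eps\alpha|u^{\eps}_t|^2\Bigr)\,dx\,dt\le C,
\]
with $C$ independent of $\eps$; the Lipschitz source terms appearing on the right are absorbed precisely as in the proof of Proposition \ref{Lgen-source-2}. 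This $\eps$-weighted bound is exactly what the pointwise stability $\varphi(t)\le C\varphi(0)$ cannot supply on its own---that estimate only gives $\eps^2\!\int|\D u^{\eps}|^2\le C$---so the crux is to secure coercivity of the spatial quadratic form from \eqref{AHYPSCENTMD} in the multidimensional general-source setting; once that is in place the remaining steps are routine bookkeeping.

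With the dissipation estimate in hand the classification of the five terms is immediate. For $I_1$ the argument obeys $\|\eps\,\D\bar\eta(u^{\eps})A_j u^{\eps}_{x_j}\|_{L^2}^2\le C\eps\bigl(\eps\!\int_0^T\!\!\int_{\RR^d}|u^{\eps}_{x_j}|^2\bigr)\le C\eps\to0$, whence $I_1\to0$ strongly in $H^{-1}_{loc}$, and likewise $I_2\to0$; both thus lie in a compact set of $H^{-1}_{loc}(\RR^d\times[0,T])$. For $I_3,I_4$ the bound $\|\D^2\bar\eta\|_{L^{\infty}}\le C$ together with the displayed dissipation estimate gives $\int_0^T\!\!\int_{\RR^d}(|I_3|+|I_4|)\le C$, so they are bounded in $L^1\subset M(\RR^d\times[0,T])$. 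For $I_5$ the Lipschitz bound $|G(u^{\eps})|\le L|u^{\eps}|$ (from (H3-b) with $G(0)=0$) and $\|\D\bar\eta\|_{L^{\infty}}\le C$ give $|I_5|\le C|u^{\eps}|^2$, bounded in $L^1$ by \eqref{PHIESTGSRCMD}. The full sum is then bounded in $W^{-1,\infty}_{loc}\subset W^{-1,p}_{loc}$ for any $p>2$ and splits into a part compact in $H^{-1}_{loc}$ plus a part bounded in $M$; Murat's Lemma \ref{FM} therefore yields that $\{\del_t\bar\eta(u^{\eps})+\sum_{j=1}^{d}\del_{x_j}\bar{q}_j(u^{\eps})\}_{\eps}$ lies in a compact set of $H^{-1}_{loc}(\RR^d\times[0,T])$, as claimed.
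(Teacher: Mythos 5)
Your proof is correct and follows essentially the same route as the paper: the paper omits the multidimensional proof, stating that it parallels the one-dimensional argument of Theorem \ref{COMP-LIPS-1D}, and your decomposition into $I_1,\dots,I_5$ followed by Murat's Lemma \ref{FM} is exactly that argument, transplanted directionwise with Proposition \ref{Lgen-source-MD} supplying the stability input. Your additional observation---that bounding $I_3,I_4$ in $L^1$ requires the $\eps$-weighted space-time dissipation bound retained from the energy identity (absorbing the Lipschitz source via Gronwall), rather than the pointwise-in-time estimate \eqref{PHIESTGSRCMD} alone---is a point the paper's one-dimensional proof glosses over, and you resolve it correctly.
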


\par\smallskip

\begin{theorem}\label{gen-source-MD}
 Let $\bar{u}(x,t)$ be a smooth solution of the system \eqref{BLAWMDIM}, defined on $\RR\times [0,T]$. Let $\{u^{\eps}\}$ be a family of smooth solutions of the relaxation system \eqref{RXSYSTMDIM} on $\RR^d \times [0,T]$. Suppose that both $\bar{u}$ and $u^{\eps}$ decay sufficiently fast at infinity and that:
\begin{itemize}
\item[(a1)]  (H1) holds true and  the positive definite, symmetric matrix $A$, is such that the subcharacteristic condition \eqref{AHYPSCENTMD}  is valid.
\item[$(a2)$] For some $M>0$
\begin{equation*}
|\D^2 F_j(u)| \leq M,\; j=1,\dots, d \,,\quad |\D^3\eta(u)| \leq M, \quad u\in \RR^n.
\end{equation*}
\item[(a3)]  The condition (H3-b) for the source term $G$ holds true.
\end{itemize}
Then
\begin{equation*}
    \Psi(t) \, \leq \, C \SP \bigl(\Psi(0)+\eps^2\bigr), \quad
    t\in[0,T]
\end{equation*}
with $\Psi$ defined in \eqref{PSEDEFMD} and $C=C(\bar{u},\alpha,\beta,\nu,M,T)>0$ independent of $\eps$.
\end{theorem}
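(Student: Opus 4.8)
The plan is to carry over the one–dimensional argument of Theorem \ref{gen-source} in its entire logical structure, replacing the single wave operator $Au_{xx}-u_{tt}$ by the multidimensional operator $\sum_{j=1}^d A_j u_{x_j x_j}-u_{tt}$ of \eqref{RXBLAWMDIM}, and the scalar subcharacteristic condition \eqref{AHYPSCENT1D} by its vector analog \eqref{AHYPSCENTMD}. Since the source is treated purely as error under \eqref{LISRC}, no relative potential is needed, and the whole argument reduces to a dissipation estimate plus Gronwall.

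First I would establish the multidimensional relative entropy identity, the analog of Lemma \ref{RENTIDENTITY}. Multiplying the relative form of \eqref{RXBLAWMDIM} by $\D\eta(u^{\eps})$ produces the evolution of $H^{rel}(\bar{u},u^{\eps})$ together with an $x_j$–summed flux $\sum_j \del_{x_j} Q_j^{rel}$ and an indefinite dissipation contribution $\eps\sum_j (u^{\eps}-\bar{u})_{x_j}^{\top}\D^2\eta(u^{\eps}) A_j (u^{\eps}-\bar{u})_{x_j}$. As in one dimension this indefiniteness is corrected by multiplying the difference equation $(u^{\eps}-\bar{u})_t + \sum_j \D F_j(u^{\eps})(u^{\eps}-\bar{u})_{x_j}=\eps(\sum_j A_j(u^{\eps}-\bar{u})_{x_j x_j}-(u^{\eps}-\bar{u})_{tt})+(G(u^{\eps})-G(\bar{u}))-\sum_j(\D F_j(u^{\eps})-\D F_j(\bar{u}))\bar{u}_{x_j}$ by $(u^{\eps}-\bar{u})_t$ and adding $2\alpha\eps$ times the result. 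The error terms $a_1,a_2,d_1,d_2,d_3$ of \eqref{ERRT1}--\eqref{ERRT2} keep their form, while $b_1,b_2,c_2$ each become a sum over $j=1,\dots,d$ and $c_1$ involves $\sum_j A_j\bar{u}_{x_j x_j}-\bar{u}_{tt}$.

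Next I would prove the multidimensional version of Lemma \ref{GESTLMM}. The two-sided equivalence $c_1\Psi(t)\leq \int_{\RR^d}\mathcal{G}\,dx\leq c_2\Psi(t)$ with $\Psi$ as in \eqref{PSEDEFMD} follows exactly as before from \eqref{RXENTPROP} and the positive definiteness of each $A_j$. The heart of the matter is the dissipation: after integration the left side carries $\eps\sum_j (u^{\eps}-\bar{u})_{x_j}^{\top}(\D^2\eta(u^{\eps})A_j - \alpha \D F_j(u^{\eps})^{\top} \D F_j(u^{\eps}))(u^{\eps}-\bar{u})_{x_j}$ together with $\eps\alpha|(u^{\eps}-\bar{u})_t+\sum_j \D F_j(u^{\eps})(u^{\eps}-\bar{u})_{x_j}|^2$. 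Choosing $\xi_j=(u^{\eps}-\bar{u})_{x_j}$ in \eqref{AHYPSCENTMD} shows this combined form is bounded below by $\eps\nu\sum_j|(u^{\eps}-\bar{u})_{x_j}|^2 + \tfrac{1}{2}\eps\alpha|(u^{\eps}-\bar{u})_t|^2$, supplying the dissipation $\eps\bar{c}\int_{\RR^d}(\sum_j|(u^{\eps}-\bar{u})_{x_j}|^2+|(u^{\eps}-\bar{u})_t|^2)\,dx$ on the left. The error terms $a_i,b_i,c_i$ are then controlled in $L^1$ as in Lemma \ref{GESTLMM}, using (a2) and the $L^{\infty}$/$L^2$ bounds on the derivatives of $\bar{u}$; each direction contributes an estimate identical in form to its one-dimensional counterpart, so the sums over $j$ produce only the constant factor $d$. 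This yields the multidimensional analog of \eqref{GFUNCEVOLPiii}.

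Finally the source terms are handled exactly as in Theorem \ref{gen-source}: under \eqref{LISRC} one has $|G(u^{\eps})-G(\bar{u})|\leq L_{\mathcal{A}}|u^{\eps}-\bar{u}|$ on the compact range $\mathcal{A}$ of $\bar{u}$, whence $d_1\leq L_{\mathcal{A}}|u^{\eps}-\bar{u}|^2$, $d_2\leq \alpha\|G(\bar{u})\|_{L^{\infty}}|u^{\eps}-\bar{u}|^2$ and $\eps d_3 \leq L_{\mathcal{A}}^2|u^{\eps}-\bar{u}|^2+\eps^2|(u^{\eps}-\bar{u})_t|^2$, matching \eqref{DESTforP4}. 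Feeding these into the multidimensional \eqref{GFUNCEVOLPiii} and absorbing the $\eps^2|(u^{\eps}-\bar{u})_t|^2$ term into the dissipation gives, via the equivalence $\int_{\RR^d}\mathcal{G}\,dx \simeq \Psi$, the differential inequality $\tfrac{d}{dt}\int_{\RR^d}\mathcal{G}\,dx \leq C(\Psi(t)+\eps^2)$, and the Gronwall lemma yields $\Psi(t)\leq C(\Psi(0)+\eps^2)$. The main obstacle is the dissipation step: one must verify that \eqref{AHYPSCENTMD}, which couples all directions through the test vectors $\xi_1,\dots,\xi_d$, genuinely dominates the full multidimensional quadratic form including the cross term $|(u^{\eps}-\bar{u})_t+\sum_j \D F_j(u^{\eps})(u^{\eps}-\bar{u})_{x_j}|^2$; once this positivity is secured, the remaining estimates are simply the one-dimensional ones summed over $j$.
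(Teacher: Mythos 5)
Your proposal is correct and follows exactly the route the paper intends: the paper omits the proof of Theorem \ref{gen-source-MD}, stating only that it follows the same line of argument as the one-dimensional case, and your plan is precisely that adaptation (multidimensional relative entropy identity, dissipation secured via \eqref{AHYPSCENTMD} with $\xi_j=(u^{\eps}-\bar{u})_{x_j}$, source terms $d_1,d_2,d_3$ treated as error under \eqref{LISRC}, then Gronwall). One small imprecision worth fixing in the write-up: the algebra produces not the per-direction sum $\sum_j \xi_j^{\top}\bigl(\D^2\eta\, A_j-\alpha\,\D F_j^{\top}\D F_j\bigr)\xi_j$ that you first write, but the coupled form $\sum_j \xi_j^{\top}\D^2\eta\, A_j\,\xi_j-\alpha\,\bigl|\sum_j \D F_j\,\xi_j\bigr|^2$ arising from completing the square in $2\alpha\eps\,(u^{\eps}-\bar{u})_t^{\top}\sum_j \D F_j\,\xi_j$ --- which is exactly the quadratic form that \eqref{AHYPSCENTMD} is built to dominate, as your closing remark correctly recognizes.
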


\section{An alternative relaxation model} \label{S7}

In this section we consider an alternative relaxation model for the system of hyperbolic balance laws \eqref{BLAWMDIM} given by
\begin{equation}\label{RXSYSTMDIMALT}
\left\{
\begin{aligned}
\del_t u + \sum_{j=1}^d \del_{x_j} v_j &= G(u) \\
\del_t v_i + A_i \del_{x_i} u &= -\frac{1}{\eps} \bigl( v_i-F_i(u))\bigr), \quad i=1,\dots,d
\end{aligned} \right.
\end{equation}
with $u, v_i\in \RR^n$ and  $A_i$ symmetric, positive definite matrix. Excluding $v_i$ from the equation \eqref{RXSYSTMDIMALT}$_1$  and assuming $G \in C^1(\RR^n)$, we obtain
\begin{equation}\label{RXBLAWMDIMALT}
\begin{aligned}
\del_t u + \sum_{j=1}^d \del_{x_j} F_j(u) = G(u) + \eps \SP \del_t ( G(u)) + \eps \Bigl(\sum_{j=1}^d A_j u_{{x_j}
{x_j}} - u_{tt}\Bigr)
\end{aligned}
\end{equation}
that approximates the system of balance laws \eqref{BLAWMDIM}.
We remark that the treatment of such relaxation systems presents several challenges. More specifically, {the time derivative} of the source term appear in the energy functional posing enormous difficulties  in the analysis,  an additional hypothesis is required for the establishment of stability (see (H7), Section \ref{S6}), the issue of compactness is problematic.
In the next two subsections we will study the stability and compactness properties of solutions to \eqref{RXBLAWMDIMALT} in order to point out the advantages of the relaxation model \eqref{RXSYSTMDIM}. We restrict the analysis to weakly dissipative systems of hyperbolic balance laws equipped with strictly convex entropy, $d=1$.

\subsection{Weakly dissipative systems with strictly convex entropy, $d=1$}

The system  \eqref{RXBLAWMDIMALT} for $d=1$ reads
\begin{equation}\label{RXBLAW1DALT}
\partial_{t}u+\partial_{x}F(u)=G(u)+\eps \SP \del_t ( G(u))+\eps(Au_{xx}-u_{tt})
\end{equation}
where $A$ is symmetric, positive definite matrix. Following the arguments of Lemma \ref{energy} we obtain:
\begin{lemma} \label{energyalt}
Suppose $u \equiv u^{\eps}(x,t)$ is a smooth solution to the equation \eqref{RXBLAW1DALT} on $\RR \times [0,T]$, $\eta-q$ is
the entropy-entropy flux pair of \eqref{BLAW1D} and $\bar{\alpha}\in\RR$ is fixed. Then, the following energy identity holds
\begin{equation}\label{STABIDSCENT1DALT}
\begin{aligned}
&\partial_{t}\bigg[\eta(u+\eps
u_{t})+\tfrac{1}{2}\eps^2\bar{\alpha} \SP |u_{t}|^2+\eps^{2}\bar{\alpha}\SP
u_x ^{\top} A u_x\\
&\qquad+\eps^2u_{t}^{\top} \Big(\,\tfrac{1}{2}\bar{\alpha} {\bf I}-\!
\int_{0}^{1}\!\int_{0}^{s} \D^2{\eta}(u+\eps\tau u_{t})\,d\tau ds\Big)u_{t} \bigg]+\partial_{x} q(u)\\[3pt]
&\quad+\eps\bar{\alpha}\big|u_{t}+\D F(u)u_{x}\big|^2+\eps u_{t}^{\top} \Big(\bar{\alpha} I-\D^2{\eta}(u)\Big)u_{t}\\[3pt]
&\quad+\eps u_{x} ^{\top} \Big(\D^2{\eta}(u)A-\bar{\alpha} \D F(u)^{\top}\D F(u)\Big)u_{x}\\[4pt]
&= \, \partial_{x}\Big(\eps \D {\eta}(u) Au_{x}+2\eps^2 \bar{\alpha} \SP u_{t}^{\top}
Au_{x}\Big)\\
&\quad +\D{\eta}(u) \Big( G(u)+ \eps\del_t(G(u))\Big)+ 2\SP\eps  \bar{\alpha} u_{t}^{\top} \Big( G(u) + \eps\del_t(G(u)\Big).
\end{aligned}
\end{equation}
\end{lemma}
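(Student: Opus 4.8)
The plan is to mirror the proof of Lemma~\ref{energy} step for step, the only change being that the right-hand side of \eqref{RXBLAW1DALT} now carries the extra contribution $\eps\SP\del_t(G(u))$ in the same position as the source $G(u)$. Consequently the entire derivation should go through verbatim if one simply treats $G(u)+\eps\SP\del_t(G(u))$ as an effective source; this term never interacts with the left-hand side structure, since it is only ever multiplied by $u_t^{\top}$ or $\D\eta(u)$ and left on the right.

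First I would multiply \eqref{RXBLAW1DALT} by $u_t^{\top}$. Using $u_t^{\top}u_t=|u_t|^2$ together with the integrations by parts $\eps\SP u_t^{\top}Au_{xx}=\eps\SP\del_x(u_t^{\top}Au_x)-\eps\SP\del_t\bigl(\tfrac12 u_x^{\top}Au_x\bigr)$ and $-\eps\SP u_t^{\top}u_{tt}=-\eps\SP\del_t\bigl(\tfrac12|u_t|^2\bigr)$, both of which rely only on the symmetry of $A$, this reproduces the left-hand side of \eqref{TID1SCENT} exactly, while the right-hand side becomes $\del_x(\eps\SP u_t^{\top}Au_x)+u_t^{\top}\bigl(G(u)+\eps\SP\del_t(G(u))\bigr)$.

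Next I would multiply \eqref{RXBLAW1DALT} by $\D\eta(u)$. The only structural input here is the entropy relation $\D\eta(u)\SP\D F(u)=\D q(u)$ from \eqref{RXENTPROP}, which turns $\D\eta(u)\SP\del_x F(u)$ into $\del_x q(u)$; the diffusive terms are processed exactly as in \eqref{TID2SCENT}, producing the indefinite quadratic forms $\eps\bigl((\D^2\eta(u)u_x)^{\top}Au_x-u_t^{\top}\D^2\eta(u)u_t\bigr)$, and the source on the right becomes $\D\eta(u)\bigl(G(u)+\eps\SP\del_t(G(u))\bigr)$. I would then multiply the first identity by $2\bar{\alpha}\eps$, add it to the second, and invoke the Chapman--Enskog expansion \eqref{CHEXPENT} to recombine $\eta(u)+\eps\SP\D\eta(u)u_t$ together with the $\eps^2$ quadratic terms into the bracket displayed in \eqref{STABIDSCENT1DALT}. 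Collecting terms yields \eqref{STABIDSCENT1DALT}, whose two source groups now read $\D\eta(u)\bigl(G(u)+\eps\SP\del_t(G(u))\bigr)$ and $2\eps\bar{\alpha}\SP u_t^{\top}\bigl(G(u)+\eps\SP\del_t(G(u))\bigr)$.

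I do not expect any genuine obstacle in establishing this identity: the computation is purely mechanical and the new term $\eps\SP\del_t(G(u))$ propagates through each step without modification. The real difficulty is deliberately deferred to the stability estimate that follows, where this \emph{time derivative of the source} sits inside the energy functional and cannot be absorbed in the clean manner that the plain source was in Proposition~\ref{w-diss2}; exposing that obstruction is precisely the purpose of recording the identity in the present form.
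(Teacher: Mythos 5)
Your proposal is correct and coincides with the paper's own treatment: the paper gives no separate proof of Lemma~\ref{energyalt}, stating only that it follows the arguments of Lemma~\ref{energy}, which is precisely your strategy of repeating the two multiplications (by $u_t^{\top}$ and by $\D\eta(u)$), the $2\bar{\alpha}\eps$ combination, and the Chapman--Enskog identity \eqref{CHEXPENT} with $G(u)+\eps\SP\del_t(G(u))$ carried along as an effective source. Your closing remark is also consonant with the paper, which defers the genuine difficulty to the stability estimate (Proposition~\ref{w-diss2-alt}), where the time derivative of the source forces the additional hypothesis \eqref{ENTSRCPROP}.
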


\par\smallskip

{  \black
In our further analysis we will employ the following elementary lemma.
\begin{lemma}[\bf Weak dissipation of a gradient]\label{D2RPOS}
Suppose $\eta(u)$ satisfies \eqref{RXENTPROP}, and $G(u)\in C^1$ satisfies \eqref{WDSRC}, \eqref{POTN}. Then
\begin{equation}\label{D2RPOSPROP}
- \D G (u) =  \D ^2 R(u) \SP \geq 0\, \quad \mbox{for all} \quad u \in \RR^n\,.
\end{equation}
In addition, if \SP $\tilde{\eta}(u):\RR^n \to \RR$ \SP satisfies \SP $\D^2 \tilde{\eta}(u) \geq 0$,\SP\SP then
\begin{equation}\label{D2RWDPROP}
- \D^2 \tilde{\eta} (u) \D G(u)  = \D^2 \tilde{\eta} (u) \D^2 R(u) \geq 0 \quad \mbox{for all} \quad u \in \RR^n\,.
\end{equation}
\end{lemma}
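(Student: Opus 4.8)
The plan is to reduce both assertions to two facts: that the potential structure \eqref{POTN} forces $-\D G$ to be the (symmetric) Hessian of $R$, and that the weak dissipation hypothesis \eqref{WDSRC}, once linearized, becomes a pointwise semidefiniteness statement. First I would record that differentiating $G(u)=-\D R(u)^{\top}$ gives $\D G(u)=-\D^2 R(u)$, so that $-\D G(u)=\D^2 R(u)$ is automatically symmetric, being a Hessian (here $G\in C^1$ forces $R\in C^2$). Thus \eqref{D2RPOSPROP} is equivalent to the positive semidefiniteness of the symmetric matrix $\D^2 R(u)$, and the task is to extract this from \eqref{WDSRC}.

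To linearize \eqref{WDSRC}, fix $u$ and a direction $v\in\RR^n$ and set $\bar{u}=u+sv$. Writing $a(s)=\D\eta(u)-\D\eta(u+sv)$ and $b(s)=G(u)-G(u+sv)$, the weakly dissipative inequality reads $-a(s)\SP b(s)\geq 0$ for all $s$, with $a(0)=0$ and $b(0)=0$. Since $\eta\in C^2$ and $G\in C^1$, the first-order expansions $a(s)=-s(\D^2\eta(u)v)^{\top}+o(s)$ and $b(s)=-s\SP\D G(u)\SP v+o(s)$ hold, whence $-a(s)b(s)=-s^2\SP v^{\top}\D^2\eta(u)\D G(u)v+o(s^2)$, using the symmetry of $\D^2\eta$. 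Dividing by $s^2$ and letting $s\to 0$ yields $v^{\top}\D^2\eta(u)\D G(u)\SP v\leq 0$, that is,
$$ v^{\top}\SP\D^2\eta(u)\SP\D^2 R(u)\SP v \,\geq\, 0 \qquad \text{for all } v\in\RR^n. $$

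Now I would conclude \eqref{D2RPOSPROP} by an eigenvalue argument that exploits the symmetry of $\D^2 R$. By \eqref{RXENTPROP} the matrix $P:=\D^2\eta(u)$ is symmetric positive definite. Let $(\mu,x)$ be any eigenpair of the symmetric matrix $S:=\D^2 R(u)$, so $\mu\in\RR$ and $x\neq 0$. Taking $v=x$ above gives $x^{\top}PS\SP x=\mu\SP x^{\top}Px\geq 0$; since $x^{\top}Px>0$, this forces $\mu\geq 0$. As every eigenvalue of $S$ is nonnegative, $\D^2 R(u)\geq 0$, which is \eqref{D2RPOSPROP}. For \eqref{D2RWDPROP} I would use that both $\D^2\tilde{\eta}(u)$ and (by the step just proved) $\D^2 R(u)$ are symmetric positive semidefinite; writing $\D^2\tilde{\eta}\SP\D^2 R=XY$ with $X=(\D^2\tilde{\eta})^{1/2}$ and $Y=(\D^2\tilde{\eta})^{1/2}\D^2 R$, the fact that $XY$ and $YX$ share the same nonzero eigenvalues shows that the nonzero spectrum of $\D^2\tilde{\eta}\SP\D^2 R$ coincides with that of the symmetric positive semidefinite matrix $(\D^2\tilde{\eta})^{1/2}\D^2 R\SP(\D^2\tilde{\eta})^{1/2}$; hence $-\D^2\tilde{\eta}\SP\D G=\D^2\tilde{\eta}\SP\D^2 R$ has nonnegative eigenvalues.

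The main obstacle is conceptual rather than computational. In the linearization step one must verify that the first-order contribution to $-a(s)b(s)$ vanishes (it does, since $a(0)=b(0)=0$) so that the inequality genuinely descends to the second-order term, which is what supplies the correct sign for $v^{\top}\D^2\eta\SP\D^2 R\SP v$. The more delicate point is the meaning of \eqref{D2RWDPROP}: the product of two symmetric positive semidefinite matrices is in general not symmetric and its quadratic form can take negative values, so the inequality there is to be read spectrally rather than as semidefiniteness of the product; the care lies in establishing the similarity of $\D^2\tilde{\eta}\SP\D^2 R$ to a symmetric positive semidefinite matrix via the shared nonzero eigenvalues of $XY$ and $YX$, an argument that also covers correctly the case where $\D^2\tilde{\eta}$ is singular.
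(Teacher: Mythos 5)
Your proof is correct and follows the same route as the paper's own: linearize \eqref{WDSRC} to obtain $v^{\top}\D^2\eta(u)\,\D^2 R(u)\,v \geq 0$, then run the eigenpair argument using the positive definiteness of $\D^2\eta$ from \eqref{RXENTPROP} and the symmetry of the Hessian $\D^2 R$. The paper simply asserts the linearized inequality (``From \eqref{WDSRC}, \eqref{POTN} it follows that\dots''), so your Taylor-expansion derivation fills in exactly the step it omits; the eigenvalue argument is then identical. Where you genuinely add content is in the second claim: the paper dispatches \eqref{D2RWDPROP} in one sentence (``follows from the fact that $\D^2\tilde{\eta}$, $\D^2 R$ are symmetric, nonnegative definite matrices''), without specifying in what sense the non-symmetric product is ``$\geq 0$''. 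Your observation that the quadratic form of a product of two symmetric positive semidefinite matrices can be negative (so \eqref{D2RWDPROP} cannot be read as semidefiniteness of the quadratic form and must be read spectrally), together with the reduction via the shared nonzero spectrum of $XY$ and $YX$ to the symmetric positive semidefinite matrix $(\D^2\tilde{\eta})^{1/2}\D^2 R\,(\D^2\tilde{\eta})^{1/2}$, identifies and proves the only interpretation under which the stated inequality is true, and it does so even when $\D^2\tilde{\eta}$ is singular. A further small improvement: the paper's proof of the second part assumes $\tilde{\eta}$ strictly convex, whereas the statement only requires $\D^2\tilde{\eta}\geq 0$; your argument works under the hypothesis as stated.
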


\begin{proof}
From \eqref{WDSRC}, \eqref{POTN} it follows that
\begin{equation*}
- z^{\top} \D^2 \eta (u) \D G(u) \SP z = z^{\top} \D^2 \eta (u) \D^2 R(u) \SP z \geq 0 \quad \mbox{for all} \quad u, \SP z \in \RR^n\,.
\end{equation*}

\par\smallskip

 Fix $u\in\RR^n$. Let $(\lambda,v)$ be an eignepair of $\D^2 R (u)$. Then the above inequality implies
\begin{equation*}
0 \leq  v^{\top} \D^2 \eta (u) \D^2 R(u) \SP v = \lambda \big(v^{\top} \D^2 \eta (u) v \big).
\end{equation*}
Recalling that $\D^2 \eta(u)>0$ and $v \neq 0$, we conclude that $\lambda \geq 0$. Hence all eigenvalues of $\D^2 R(u)$ are nonnegative. Since $\D^2 R(u)$ is a symmetric matrix this is equivalent to $\D^2 R (u) \geq 0$.

\par

Next, suppose $\tilde{\eta} (u)$ is a strictly convex function. Then \eqref{D2RWDPROP} follows from the fact that $\D^2 \tilde{\eta}$, $\D^2 R$ are symmetric, nonnegative definite matrices.
\end{proof}

{ \begin{remark}
Note that the requirement of uniform convexity for $\eta(u)$ in Lemma \ref{D2RPOS} can be replaced with strict convexity. Thus, when $G$ is a gradient, weak dissipation of $G$ with respect to some strictly convex entropy automatically yields weak dissipation with respect to any convex entropy, a property which is not in general satisfied.
\end{remark} }
}

To establish the stability of solutions to \eqref{RXBLAW1DALT} we will employ an additional hypothesis:
\begin{itemize}
\item  Suppose there exists $S:\RR^n\to\RR$ \SP such that
\begin{equation}\tag{H5}\label{ENTSRCPROP}
\begin{aligned}
\D \eta(u) \D G(u)&=-\D S(u), \quad u \in \RR^n\\
S(u) \geq S(0)&=0\,.
\end{aligned}
\end{equation}
\end{itemize}

\begin{remark}
We note that the hypothesis \eqref{ENTSRCPROP} is a severe assumption. It is motivated by the following observation. Suppose that the entropy $\eta(u)=\frac{1}{2} |u|^2$, and \eqref{WDSRC}, \eqref{POTN} hold true. Then,
\begin{equation*}
  \D \eta(u) \D G (u)= u^{\top} \D^2 R(u) =  - \D S(u) \quad \mbox{with} \quad S(u)=\D R(u)u - R(u)\,.
\end{equation*}
Moreover, by Lemma \ref{D2RPOS} it follows that $\D^2 R \geq 0$ and hence $S(u) \geq S(0)=0$.
\end{remark}

\par\smallskip

\begin{proposition}\label{w-diss2-alt}

Let $\{u^{\eps}(x,t)\}$ be a family of smooth solutions to the equation \eqref{RXBLAW1DALT} defined on $\RR \times [0,T]$. Suppose that $u \equiv u^{\eps}$ decays fast at infinity and that:

\begin{itemize}
\item[(a1)]  \eqref{RXENTPROP} holds true, and the positive definite, symmetric matrix $A$ is such that the subcharacteristic condition \eqref{AHYPSCENT1D} is valid.

\item[(a2)] The conditions \eqref{WDSRC},  \eqref{POTN}, and \eqref{ENTSRCPROP} for the source $G \in C^1(\RR^n)$ hold true.
 \end{itemize}
Then for all $t\in[0,T]$
\begin{equation}\label{PHIESTENTW1DALT}
\begin{aligned}
 \varphi(t) &+ \eps \int_{\RR} \Big\{S(u^{\eps}(x,t))+R(u^{\eps}(x,t))\Big\} \,dx  +  \int_{0}^{t}\int_{\RR} \SP |\D{\eta}(u)G(u)| \, dx dt  \\
 & \qquad\qquad \leq \, C \bigg( \varphi(0) + \eps \int_{\RR} \Big\{S(u^{\eps}(x,0))+R(u^{\eps}(x,0))\Big\} \SP dx \bigg)
\end{aligned}
\end{equation}
with $\varphi$ defined in \eqref{PHIDEF}
and $C=C(A, \alpha, \beta)>0$ independent of both  $\eps$ and $T$.
\end{proposition}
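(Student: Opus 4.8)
The plan is to follow the argument of Proposition \ref{w-diss2} almost verbatim, starting from the energy identity \eqref{STABIDSCENT1DALT} of Lemma \ref{energyalt} with the choice $\bar{\alpha}=\alpha$, and to show that each of the four source contributions on its right-hand side is either a perfect time derivative of a nonnegative quantity or carries a favourable sign. As before, the bracket multiplying $\del_t$ on the left of \eqref{STABIDSCENT1DALT} is controlled through \eqref{RXENTPROP} via the bound \eqref{ALPHADOM}, so that after integration it is equivalent to $\varphi(t)$ by \eqref{PHIEQUIVENT}; the three quadratic dissipation terms on the left are rendered nonnegative by using $\D^2\eta(u)\le\tfrac12\alpha{\bf I}$ from \eqref{RXENTPROP} (which gives $\eps\SP u_t^\top(\alpha{\bf I}-\D^2\eta(u))u_t\ge\tfrac12\eps\alpha|u_t|^2$) together with the subcharacteristic condition \eqref{AHYPSCENT1D} (which gives $\eps\SP u_x^\top(\D^2\eta(u)A-\alpha\D F(u)^\top\D F(u))u_x\ge\eps\nu|u_x|^2$).

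The heart of the matter is the treatment of the source terms, where I would write $\del_t(G(u))=\D G(u)u_t$. The two terms already present in Proposition \ref{w-diss2} are handled identically: weak dissipation \eqref{WDSRC} evaluated at $\bar{u}=0$ (recalling $G(0)=0$ and $\D\eta(0)=0$) gives $\D\eta(u)G(u)\le 0$, so this term contributes $+|\D\eta(u)G(u)|$ to the left, while \eqref{POTN} turns $2\eps\alpha\SP u_t^\top G(u)$ into $-2\eps\alpha\SP\del_t R(u)$, a perfect time derivative. For the two genuinely new terms, the contribution $\eps\SP\D\eta(u)\del_t(G(u))=\eps\SP\D\eta(u)\D G(u)u_t$ is precisely where hypothesis \eqref{ENTSRCPROP} enters: it yields $\D\eta(u)\D G(u)=-\D S(u)$, hence $\eps\SP\D\eta(u)\del_t(G(u))=-\eps\SP\del_t S(u)$, again a perfect time derivative of a nonnegative function. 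Finally, $2\eps^2\alpha\SP u_t^\top\del_t(G(u))=2\eps^2\alpha\SP u_t^\top\D G(u)u_t$ is controlled by Lemma \ref{D2RPOS}: since $-\D G(u)=\D^2 R(u)\ge 0$, this quantity is nonpositive and may simply be discarded.

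With these reductions in hand I would integrate \eqref{STABIDSCENT1DALT} over $\RR\times[0,t]$, drop the spatial divergence by the fast decay of $u^{\eps}$ at infinity, and transfer the time-derivative source terms $-\eps\SP\del_t S(u)$ and $-2\eps\alpha\SP\del_t R(u)$ to the left, where integration in time produces the boundary contributions $\eps\int_{\RR} S(u^{\eps}(x,t))\,dx$ and $2\eps\alpha\int_{\RR} R(u^{\eps}(x,t))\,dx$ minus their values at $t=0$. Replacing the leading bracket by $\varphi(t)$ through \eqref{PHIEQUIVENT}, and absorbing the fixed coefficients $1$ and $2\alpha$ in front of $S$ and $R$ into the constant, gives \eqref{PHIESTENTW1DALT} with $C=C(A,\alpha,\beta)$; because every retained term on the left is sign-definite and no Gronwall step is needed, $C$ is automatically independent of $T$.

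The main obstacle is the term $\eps\SP\D\eta(u)\del_t(G(u))$: in contrast with Proposition \ref{w-diss2}, the appearance of the \emph{time derivative} of the source in \eqref{RXBLAW1DALT} generates a contribution that is a priori neither a divergence nor sign-definite, and none of \eqref{WDSRC}--\eqref{POTN} by themselves converts it into one. It is exactly to absorb this term that the severe structural hypothesis \eqref{ENTSRCPROP} must be imposed, forcing $\D\eta(u)\D G(u)$ to itself be a gradient $-\D S(u)$ with $S\ge 0$. This is the analytic reason, anticipated in the Remark following \eqref{ENTSRCPROP}, that the alternative model \eqref{RXSYSTMDIMALT} is far less robust than \eqref{RXSYSTMDIM}, for which no such assumption is required.
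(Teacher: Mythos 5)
Your proposal is correct and follows essentially the same route as the paper's proof: both start from the energy identity \eqref{STABIDSCENT1DALT} with $\bar{\alpha}=\alpha$, split the four source contributions into sign-definite terms (namely $\D\eta(u)G(u)\le 0$ by \eqref{WDSRC} and $2\eps^2\alpha\,u_t^{\top}\D G(u)u_t=-2\eps^2\alpha\,u_t^{\top}\D^2R(u)u_t\le 0$ by Lemma \ref{D2RPOS}) and perfect time derivatives ($-\eps\,\del_t S(u)$ via \eqref{ENTSRCPROP} and $-2\eps\alpha\,\del_t R(u)$ via \eqref{POTN}), and then integrate using \eqref{ALPHADOM}, \eqref{PHIEQUIVENT}, and \eqref{AHYPSCENT1D} with no Gronwall step, so that $C$ is independent of $T$. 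Your closing observation about why \eqref{ENTSRCPROP} is indispensable for the term $\eps\,\D\eta(u)\del_t(G(u))$ matches the paper's own motivation for introducing that hypothesis.
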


\begin{proof} Using \eqref{POTN}, \eqref{ENTSRCPROP} we rewrite the last four terms on the right-hand side of \eqref{STABIDSCENT1DALT}, with $\bar{\alpha}=\alpha$, as follows
\begin{equation}\label{WDSRCENTALT}
\begin{aligned}
&\Big(\D \eta(u) G(u)+\eps \D \eta(u)\SP\del_t (G(u))+2\SP\eps u_{t}^{\top}G(u)+2\SP\eps^2 \alpha  u_{t}^{\top}\del_t(G(u))\Big)\\
&= \Big(\D \eta(u)G(u)-2\eps^2 \alpha \SP u_t^{\top} \D^2 R(u) \SP u_t\Big) - \eps  \del_t \Big( S(u)+ 2\alpha R(u) \Big) =: I_1 + I_2.
\end{aligned}
\end{equation}
Since $G \in C^1$, by \eqref{RXENTPROP}, \eqref{WDSRC}, \eqref{POTN}  and Lemma \ref{D2RPOS}
\begin{equation}\label{I1b}
\alpha \SP u_t^{\top} \D^2 R(u) \SP u_t \geq 0\,.
\end{equation}
Thus, by \eqref{WDSRC} and \eqref{I1b} we conclude $I_1 \leq 0$. Then, integrating the identity \eqref{STABIDSCENT1DALT}, with $\bar{\alpha}=\alpha$, and employing hypotheses \eqref{RXENTPROP}, \eqref{AHYPSCENT1D}, \eqref{WDSRC}, \eqref{POTN}, \eqref{ENTSRCPROP} along with \eqref{ALPHADOM}, \eqref{PHIEQUIVENT}  and \eqref{WDSRCENTALT}, we conclude \eqref{PHIESTENTW1DALT}.
\end{proof}

\subsection{Compactness issues}

Let $\{u_\eps \}$ be a family of smooth solutions of \eqref{RXBLAW1DALT}  on $\RR  \times [0, T]$ emanating from smooth initial data. The family $\{u_\eps\}$ is assumed to decay fast at infinity. Let the hypotheses of Lemma \ref{w-diss2-alt} remain valid so that the stability estimate \eqref{PHIESTENTW1DALT}  holds true with  $\eta-q$ entropy-entropy flux pair satisfying \eqref{RXENTPROP}, and $A$ a symmetric, positive-definite matrix subject to \eqref{AHYPSCENT1D}. Let the entropy pairs $(\bar{\eta}, \bar{q})$ satisfy \eqref{entropy-hyp} and the growth condition holds \eqref{proj-growth}. We now show that, in general, one may not expect compactness from the family $\{\del_t \bar{\eta}(u^{\eps}) + \del_x \bar{q}(u^{\eps}) \}$.

\par\smallskip

We follow the arguments of Theorem \ref{COMP-WD-1D}. Starting from \eqref{RXBLAW1DALT} we obtain
\begin{equation*}
\begin{aligned}
\del_t \bar{\eta}(u^{\eps}) + \del_x \bar{q}(u^{\eps}) &= \eps \del_x \left( \D\bar{\eta}(u^{\eps}) A u^{\eps}_x \right) - \eps \del_t \left( \D\bar{\eta}(u^{\eps}) u^{\eps}_t\right) \\[1pt]
&\qquad -\eps {u^{\eps}_x} ^{\top} \D^2\bar{\eta}(u^{\eps}) A u^{\eps}_x + \eps {u^{\eps}_t} ^{\top} \D^2 \bar{\eta}(u^{\eps}) u^{\eps}_t\\[1pt]
 &\qquad + \D\bar{\eta}(u^{\eps}) G(u^{\eps}) + \eps \D\bar{\eta}(u^{\eps}) \D G(u^{\eps}) u^{\eps}_t\\[3pt]
&\quad:= I_1 + I_2+ I_3 + I_4 + I_5 + I_6.
\end{aligned}
\end{equation*}
As before from \eqref{entropy-hyp}, \eqref{proj-growth} and \eqref{PHIESTENTW1DALT}, it follows that the terms $I_1, I_2$ lie in  compact set of $H^{-1}$, and the terms $I_3, I_4, I_5$ are bounded in $L^1$. However, the term $I_6$ is,  in general, neither in a bounded set of the space of measures nor in a compact set of $H^{-1}$: for the former one must have a control of $\eps \D G$ in $L^2$, and for the latter a control of $\eps G$ in $L^2$, which follows from the relation
\begin{equation*}
I_6 =  \del_t \big(\D\bar{\eta}(u^{\eps}) \eps G(u^{\eps})\big) - \big( \D^{2}\bar{\eta}(u^{\eps}) u^{\eps}_t \big)^{\top} \eps G(u^{\eps})\,.
\end{equation*}
The stability estimate \eqref{PHIESTENTW1DALT}, however, does not provide such bounds. Thus, Murat's lemma is not applicable and the issue of compactness appears problematic.

\section{Applications} \label{S8}

\subsection{Elasticity system}
Consider the relaxation of the (isothermal/isentropic) elasticity system with a source term:
\begin{equation}\label{ELASTSYST}
\begin{aligned}
\left[\begin{aligned}
&u\\
&v\\
\end{aligned}\right]_t -
\left[\begin{aligned}
&v\\
\sigma&(u)\\
\end{aligned}\right]_x = G(u,v)=\left[\begin{aligned}
&0\\
g(&u,v)\\
\end{aligned}\right].
\end{aligned}
\end{equation}
In the context of gas flow, $u$ is specific volume (
$u = 1/\rho$), thus constrained by $u > 0.$ In the context of the thermoelastic bar, $u$ is the
{\em strain}, likewise constrained by $u > 0.$ Finally, in the context of shearing motion, $u$
is shearing, which may take both positive and negative values. In the gas case, it is
traditional to use the pressure $p= - \sigma,$ instead of $\sigma.$

In the present context, the stress $\sigma(u)$ is assumed to satisfy
\begin{equation}\label{GSTRESS}
\sigma(0)=0 \quad \mbox{and} \quad  0 < \gamma < \sigma'(u) < \Gamma\quad \mbox{for all} \quad u \in \RR^n.
\end{equation}
We assume that $g(u,v)$, with $g(0,0)=0$, satisfies one of the following:
\begin{itemize}
\item [$(i)$] {\it Either} $g$ is independent of $u$, that is $g(u,v)=g(v)$, and
satisfies
\begin{equation}\label{WKDSPELAST}
    \bigl(g(v)-g(\bar{v})\bigr)\bigl(v-\bar{v}\bigr) \leq 0, \quad \forall \SP
    v,\bar{v}\in \RR
\end{equation}
which corresponds to a {\it frictional damping}.
\end{itemize}
\begin{itemize}
\item [$(ii)$] {\it or} \SP for every compact set $\mathcal{A} \subset \RR^2$ there
exists $L_{\mathcal{A}}>0$ such that
\begin{equation}\label{SRCLIPELAST}
|g(u,v)-g(\bar{u},\bar{v})| \leq L_{\mathcal{A}}
\bigl(|u-\bar{u}|+|v-\bar{v}|\bigr)
\end{equation}
for all $\SP (u,v)\in \RR^2$, $(\bar{u},\bar{v})\in\mathcal{A}$.
\end{itemize}
The system \eqref{ELASTSYST} is equipped with the entropy - entropy flux pair
$\bar{\eta},\bar{q}$ given by
\begin{equation}\label{ELASTENTPAIR}
\eta(u,v) = \tfrac{1}{2}v^2 + \Sigma(u), \quad q(u,v) = -\sigma(u)v \quad
\mbox{with} \quad \Sigma(u) := \int^{u}_0 \sigma(\tau)\SP d\tau \SP.
\end{equation}

\par\smallskip

For the system \eqref{ELASTSYST} the second order relaxation system \eqref{RXBLAW1D} reads
\begin{equation}\label{ELASTSYSTRELAX}
\begin{aligned}
\left[\begin{aligned}
&u\\
&v\\
\end{aligned}\right]_t -
\left[\begin{aligned}
&v\\
\sigma&(u)\\
\end{aligned}\right]_x =\left[\begin{aligned}
&0\\
g(&u,v)\\
\end{aligned}\right] + \eps \bigg(A \left[\begin{aligned}
&u\\
&v\\
\end{aligned}\right]_{xx}
-
 \left[\begin{aligned}
&u\\
&v\\
\end{aligned}\right]_{tt}\bigg)\,.
\end{aligned}
\end{equation}
Observe that by \eqref{GSTRESS} and \eqref{ELASTENTPAIR} hypotheses \eqref{RXENTPROP}-\eqref{AHYPSCENT1D} are satisfied with
\begin{equation}\label{SUBCELAST}
  \alpha = \max(2\Gamma,1), \quad \beta=\min(\gamma,1), \quad A=2\alpha{\bf I}\,.
\end{equation}

\par

Suppose that \eqref{WKDSPELAST} holds true. Then
\begin{equation*}
  \big(\D \eta(u,v)-\eta(\bar{u},\bar{v})\big)  \big(G(u,v)-G(\bar{u},\bar{v})\big) = (v-\bar{v})(g(v)-g(\bar{v})) \leq 0
\end{equation*}
and hence $G$ satisfies \eqref{WDSRC}. Furthermore, setting
\begin{equation*}
  R(u,v):=-\int^v_0 g(\theta) \SP d\theta \quad \mbox{we get} \quad G(u,v) = -\D R (u,v)\,, \quad R(0,0)=0
\end{equation*}
which gives \eqref{POTN}. { Thus, one may employ Proposition \ref{w-diss2} to obtain the stability estimate \eqref{PHIESTENTW1D} and Theorem \ref{WDESTTHM} to obtain the error estimate \eqref{ELASTSYSTRELAX} that holds before the formation of shocks.

\par\smallskip

Similarly, suppose \eqref{SRCLIPELAST} holds true. Then clearly $G$ satisfies \eqref{LISRC} and one may use Proposition \ref{Lgen-source-2} and Theorem \ref{gen-source} to obtain the stability estimate \eqref{PHIESTSCENTG1D} and the error estimate \eqref{ELASTSYSTRELAX}, respectively.

\par\smallskip

The stability estimates \eqref{PHIESTENTW1D} and \eqref{PHIESTSCENTG1D}  suffice to apply the $L^p$ theory of compensated compactness. In the spirit of \cite[Theorem 1]{GT-2001} we prove the following convergence theorem.
\begin{theorem} Let $\sigma(u)$ satisfy \eqref{GSTRESS} and suppose also
\begin{equation}\label{SHCOND}
  (u-u_0)\SP g''(u) \neq 0 \;\; \mbox{for} \;\; u \neq u_0 \;\; \mbox{and} \;\; g'', \, g''' \in L^2 \bigcap L^{\infty}.
\end{equation}
Let $(u^{\eps},v^{\eps})$ be a family of smooth solutions of \eqref{ELASTSYST} defined on $\RR \times [0,T)$ emanating from smooth initial data subject to $\eps$-independent bounds
\begin{equation*}
  \varphi(0) = \int_{\RR} \Big({u_0^{\eps}}^2 + {v_0^{\eps}}^2 \Big) dx + \eps^2 \int_{\RR} \Big( |{u^{\eps}_0}_x|^2 + |{u_0^{\eps}}_t|^2  \,\Big) dx \, \leq \, C_0.
\end{equation*}
 Let $A$ be a symmetric, positive definite matrix satisfying \eqref{SUBCELAST} and let $g(u,v)$ satisfy  either \eqref{WKDSPELAST} or \eqref{SRCLIPELAST}. Then, along a subsequence if necessary,
  \begin{equation*}
   u^{\eps} \to u \,, \, v^{\eps} \to v\,, \;\; \mbox{a.e.} \;\; (x,t) \;\; \mbox{and} \;\; in \;\; L^p_{loc}(\RR \times (0,T)), \quad \mbox{for} \;\; p < 2.
 \end{equation*}
\end{theorem}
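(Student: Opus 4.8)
The plan is to combine the uniform a priori estimates already proved for the elasticity relaxation system \eqref{ELASTSYSTRELAX} with the $L^p$ compensated compactness framework for elastodynamics. First I would record the uniform bounds. Under \eqref{WKDSPELAST} the source satisfies \eqref{WDSRC}--\eqref{POTN} and the hypotheses of Proposition \ref{w-diss2} hold (with $\alpha,\beta,A$ as in \eqref{SUBCELAST}), so \eqref{PHIESTENTW1D} is valid; under \eqref{SRCLIPELAST} the source satisfies \eqref{LISRC} and Proposition \ref{Lgen-source-2} gives \eqref{PHIESTSCENTG1D}. In either case the $\eps$-independent bound on $\varphi(0)$ yields $\varphi(t)\le C$ on $[0,T)$ uniformly in $\eps$; since \eqref{GSTRESS}--\eqref{ELASTENTPAIR} make $\Sigma(u)$ grow quadratically, this furnishes uniform $L^2_{loc}$ bounds for $\{u^{\eps}\}$ and $\{v^{\eps}\}$. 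Passing to a subsequence I would represent the weak-$*$ limits of subquadratically growing continuous functions of $(u^{\eps},v^{\eps})$ by a generalized Young measure $\{\nu_{x,t}\}$ (the absence of $L^\infty$ bounds forcing the generalized, possibly concentrating, version, in the spirit of Shearer \cite{Sh} and Serre--Shearer \cite{SSh}).

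Next I would supply the div--curl input. For admissible entropy pairs $(\bar\eta,\bar q)$, Theorem \ref{COMP-WD-1D} (case \eqref{WKDSPELAST}, using the growth hypothesis \eqref{proj-growth}) and Theorem \ref{COMP-LIPS-1D} (case \eqref{SRCLIPELAST}, using \eqref{entropy-hyp2}) guarantee that $\del_t\bar\eta(u^{\eps})+\del_x\bar q(u^{\eps})$ lies in a compact set of $H^{-1}_{loc}(\RR\times[0,T))$. Applying the div--curl lemma to two such pairs $(\bar\eta_1,\bar q_1)$, $(\bar\eta_2,\bar q_2)$ then yields the Tartar commutation relation for the Young measure,
\begin{equation*}
\langle \nu_{x,t}, \bar\eta_1 \bar q_2 - \bar\eta_2 \bar q_1\rangle = \langle\nu_{x,t},\bar\eta_1\rangle\langle\nu_{x,t},\bar q_2\rangle - \langle\nu_{x,t},\bar\eta_2\rangle\langle\nu_{x,t},\bar q_1\rangle,
\end{equation*}
valid for a.e.\ $(x,t)$ and all admissible entropy pairs.

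The crux is the reduction step. Exploiting the strict hyperbolicity $\gamma<\sigma'<\Gamma$ from \eqref{GSTRESS} together with the non-degeneracy condition \eqref{SHCOND}, I would invoke the reduction theorem for the elasticity system to force the commutation relation to collapse $\nu_{x,t}$ to a single Dirac mass $\delta_{(u(x,t),v(x,t))}$ for a.e.\ $(x,t)$; this is precisely the content of \cite[Theorem 1]{GT-2001} (cf.\ \cite{Sh,SSh} for the unbounded/$L^p$ setting extending DiPerna's classical argument for the $p$-system). Reduction to a point mass is equivalent to a.e.\ convergence of $(u^{\eps},v^{\eps})$ along the subsequence, and since the uniform $L^2$ bound makes $\{|u^{\eps}|^p\},\{|v^{\eps}|^p\}$ equi-integrable for $p<2$, Vitali's theorem upgrades this to convergence in $L^p_{loc}(\RR\times(0,T))$, giving the claim.

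The main obstacle is this third step. The reduction for elastodynamics is itself delicate: it requires carefully constructed families of (Lax/progressing-wave) entropies adapted to $\sigma$, and here one must handle these without $L^\infty$ control, so the concentration part of the generalized Young measure must be shown to be absent or inert. A secondary technical difficulty is that the entropies needed for the reduction generally grow, whereas the $H^{-1}_{loc}$-compactness in Theorems \ref{COMP-WD-1D}--\ref{COMP-LIPS-1D} was established under the boundedness and growth hypotheses \eqref{entropy-hyp}, \eqref{proj-growth}; reconciling the two requires a truncation-and-approximation argument that controls the entropy dissipation uniformly in $\eps$, which is where the bulk of the work lies.
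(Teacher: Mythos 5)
Your outline follows the same route as the paper: uniform energy bounds from Propositions \ref{w-diss2} / \ref{Lgen-source-2}, the dissipation compactness of Theorems \ref{COMP-WD-1D} / \ref{COMP-LIPS-1D}, Tartar's commutation relation, reduction of the generalized Young measure to a point mass via the elastodynamics results of \cite{Sh,SSh} (in the spirit of \cite{GT-2001}), and a Vitali/equi-integrability upgrade to $L^p_{loc}$, $p<2$. However, there is one omission and one substantive misconception. The omission: in the weakly dissipative case you invoke Theorem \ref{COMP-WD-1D} ``using the growth hypothesis \eqref{proj-growth}'' without checking that the elasticity source satisfies it. This verification is a genuine step of the paper's proof, and it is where the structure of \eqref{ELASTSYST} enters: by \eqref{ELASTENTPAIR} one has $\D\eta\,G = v\,g(v)\le 0$ under \eqref{WKDSPELAST}, hence for any $\bar\eta$ with $\|\D\bar\eta\|_{L^\infty}\le C$, $|\D\bar\eta\,G| = |\bar\eta_v\, g(v)| \le C\bigl(M - v\,g(v)\bigr) = C\bigl(M - \D\eta\,G\bigr)$ with $M=\max_{|v|\le 1}|g(v)|$; i.e.\ \eqref{proj-growth} is \emph{automatic} for the elasticity system. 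Without this, Theorem \ref{COMP-WD-1D} cannot be applied.

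The misconception concerns what you call the ``secondary technical difficulty'' and locate as ``where the bulk of the work lies.'' No truncation-and-approximation argument reconciling growing entropies with the bounded class \eqref{entropy-hyp} is needed, and the paper performs none. The entropies constructed in \cite{Sh} (and used in \cite{SSh}) for the reduction of the generalized Young measure have growth controlled by the wave speeds at infinity; under \eqref{GSTRESS}, $0<\gamma<\sigma'(u)<\Gamma$, those wave speeds are uniformly bounded, and consequently the test pairs needed for the reduction already satisfy the boundedness hypotheses \eqref{entropy-hyp-elast} as they stand. The paper's proof consists precisely of observing this containment (with \eqref{SHCOND} transferring the nondegeneracy assumptions of \cite{Sh,SSh}) and then checking, as above, that the hypotheses of Theorems \ref{COMP-WD-1D} / \ref{COMP-LIPS-1D} hold for that class, so that the $H^{-1}_{loc}$ compactness feeds directly into the cited reduction. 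As written, your proof stalls exactly at the step you flag as open, although the fix is an observation about the hypotheses rather than the lengthy argument you anticipate.
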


\begin{proof}
Let $(u^{\eps},v^{\eps})$ be a family of solutions to \eqref{ELASTSYSTRELAX}. The proof uses the theory of compensated compactness \cite{Tartar-1979}. Typically, in such proofs, the goal is to control the dissipation measure and to show that
\begin{equation}\label{cc-elast}
\Big\{\del_t \bar{\eta}(u^{\eps},v^{\eps}) +  \del_x \bar{q}(u^{\eps},v^{\eps})\Big\}_{\eps}\,\, \mbox{lies in a compact set of }\,\, H^{-1}_{loc}(\RR \times (0,T))
\end{equation}
for a class of entropy-entropy flux pairs $\bar{\eta}-\bar{q}$ for the equations of elasticity. In the presence of uniform $L^{\infty}$-bounds, the theorem of DiPerna \cite{DiPerna83} guarantees compactness of approximate solutions and implies that, along a subsequence, $u^{\eps} \to u$ and $v^{\eps} \to v$ a.e. $(x,t)$.

\par\smallskip

In the present case $L^1$-estimates are only available in the special case that $A$ is a multiple of the identity matrix (see \cite{Serre-2000}) and, in view of \eqref{PHIESTENTW1D} and \eqref{PHIESTSCENTG1D}, the natural stability framework is in the energy norm. Convergence of viscosity approximations to the equations of elastodynamics in the energy framework is carried out in Shearer \cite{Sh} (for the genuine-nonlinear case) and Serre-Shearer \cite{SSh} (for loss of genuine nonlinearity
at one point). In \cite{Sh} two classes of entropies, with growth controlled
by the wave-speeds at infinity, are constructed (\cite[Lemma 2]{Sh}) for which Tartar's commutation relation is justified (see \cite[Lemma 2]{Sh}) and are used to show that the support of the (generalized) Young measure is a point mass (\cite[Lemma 7, Theorem 1-(iii)]{Sh}). When $\sigma(u)$ has one
inflection point, the reduction of the Young measure is performed in \cite[Lemma 3]{SSh} and \cite[Section 5]{SSh}.

To ensure the dissipation estimate, we employ the growth assumption \eqref{GSTRESS}, the subcharacteristic condition \eqref{SUBCELAST}$_3$ and the assumption on the source \eqref{WKDSPELAST}, \eqref{SRCLIPELAST}. Then it suffices to establish \eqref{cc-elast} for entropy pairs $\bar{\eta}-\bar{q}$ satisfying
\begin{equation}\label{entropy-hyp-elast}
\|\bar{\eta}\|_{L^{\infty}}\,, \|\bar{q}\|_{L^{\infty}}\,, \|\D\bar{\eta}\|_{L^{\infty}}\,, \|\D^2\bar{\eta}\|_{L^{\infty}} \le C \,.
\end{equation}
This class of entropy pairs contains (under the assumption \eqref{GSTRESS}) the test-pairs that are used in \cite{Sh,SSh} in order to prove the reduction of the generalized Young measure to a point mass and to show strong convergence in $L^p_{loc}$ for $p < 2$. Hypothesis \eqref{SHCOND}
reflects the assumptions needed in those works. To complete the proof, we show that \eqref{cc-elast} holds for entropy-entropy flux pair $\bar{\eta}-\bar{q}$ satisfying \eqref{entropy-hyp-elast}.

First suppose that  $g(u,v)$ satisfies \eqref{WKDSPELAST}. Then by \eqref{ELASTENTPAIR}
\begin{equation}\label{proj-growth-elast}
\begin{aligned}
 & |\D \bar{\eta}(u,v) G(u,v)| =  |\bar{\eta}_{v}(u,v) g(v)| \leq C\big(M +\D \eta G(u,v)\big)\,, \quad M=\max_{|v|\leq 1}{|g(v)|}\,.
\end{aligned}
\end{equation}
The inequality \eqref{proj-growth-elast} is the analog of the condition \eqref{proj-growth} used in Theorem \ref{COMP-WD-1D} (in which case, the condition \eqref{proj-growth} is {\it automatically} satisfied  for the elasticity system  \eqref{ELASTSYST}). Then by \eqref{entropy-hyp-elast}, \eqref{proj-growth-elast}, and Theorem \ref{COMP-WD-1D} we conclude \eqref{cc-elast}.

\par\smallskip

Suppose now that $g(u,v)$ satisfies \eqref{SRCLIPELAST}. Then $G$ satisfies \eqref{LISRC} and hence from \eqref{entropy-hyp-elast}, \eqref{proj-growth-elast}, and Theorem \ref{COMP-LIPS-1D} we obtain \eqref{cc-elast}.

\end{proof}
}

\par\smallskip

We remark that the alternative relaxation system \eqref{RXBLAW1DALT} reads
\begin{equation}\label{ELASTSYSTRELAXALT}
\begin{aligned}
\left[\begin{aligned}
&u\\
&v\\
\end{aligned}\right]_t -
\left[\begin{aligned}
&v\\
\sigma&(u)\\
\end{aligned}\right]_x =\left[\begin{aligned}
&0\\
g(&u,v)\\
\end{aligned}\right] + \eps \left[\begin{aligned}
&0\\
g(&u,v)\\
\end{aligned}\right]_t+ \eps \bigg(A \left[\begin{aligned}
&u\\
&v\\
\end{aligned}\right]_{xx}
-
 \left[\begin{aligned}
&u\\
&v\\
\end{aligned}\right]_{tt}\bigg)\,.
\end{aligned}
\end{equation}
Then setting
\begin{equation*}
S(u,v):= -g(v)v-R(u,v) \;\;\; \mbox{we get} \;\;\; \D\eta(u,v) \D G(u,v) = vg'(v) = -\D S(u,v), \; S(0,0)=0
\end{equation*}
which gives \eqref{ENTSRCPROP}. Thus, the stability of solutions to \eqref{ELASTSYSTRELAXALT} follows from Proposition \ref{w-diss2-alt}.

\subsection{Isentropic  combustion model}
{
The governing equations for chemical reaction from {\it unburnt} gases to {\it burnt} gases in certain physical regimes (in Lagrangian coordinates) read \cite{CHT03}:
\begin{equation}\label{COMBMCONS}
\begin{aligned}
&\del_t v - \del_x u  = 0\\
&\del_t u + \del_x ( P(v,s,Z) ) = 0\\
&\del_t \bigl( E(v,s,Z) + \tfrac{1}{2} u^2 + qZ \bigr)_t + \del_x(u P(v,s,Z)) = r\\
&\del_t Z + K \varphi(\Theta(v,s,Z)) Z = 0
\end{aligned}
\end{equation}
The state of the gas is characterized by the macroscopic variables: the specific volume $v(x,t),$  the velocity field $u(x,t),$ the entropy $s(x,t)$ and the mass fraction of the
reactant $Z(x,t),$ whereas the physical properties of the material are reflected through
appropriate constitutive relations which relate the pressure $P(v,s,Z)$, internal energy
$E(v,s,Z)$ with the macroscopic variables. Here, and in what follows, $q$ represents the
difference in the heats between the reactant and the product, $K$ denotes the rate of the
reactant, whereas $\varphi(\theta) \geqslant 0$ is the reaction function. The function
$r(x,t)$ represents a source term (additional radiating heat density).

\par\smallskip
In this section we address the problem of
relaxation to the {\it isentropic combustion} model
\begin{equation}\label{ISCOMB}
\begin{aligned}
\left[\begin{aligned}
&v\\
&u\\
&Z\\
\end{aligned}\right]_t +
\left[\begin{aligned}
-u&\\
P(v,&Z) \\
0&\\
\end{aligned}\right]_x   =
\left[\begin{aligned}
\!0&\\
\!0&\\
\!-K\varphi(&\Theta(v,Z))\\
\end{aligned}\right]
\end{aligned}
\end{equation}
that arises naturally from the system \eqref{COMBMCONS}  by externally regulating $r$ to ensure
$s=s_0$ \cite{Dafermos79}; in the sequel we suppress the variable $s$ and use the notation
\begin{equation*}
P(v,Z):=P(v,s_0,Z), \quad \Theta(v,Z):=\Theta(v,s_0,Z).
\end{equation*}}

\par\smallskip

We impose the following requirements on $P,\Theta$ (see \cite{MT-2013a} for the motivation):
%
%
%
%
%
\begin{itemize}
\item[$(a1)$] Motivated by the physical property  $\del_v P <0$ we assume that
\begin{equation*}
0 \SP < \SP \gamma \SP < -\del_v P(v,Z)  \SP < \SP \Gamma, \quad v\in\RR,\SP Z\in[0,1].
\end{equation*}

\item[$(a2)$] There exists $\bar{C}>0$ \SP such that
\begin{equation*}
\quad \Bigl|\int_{0}^{v} P_{ZZ}(\tau,Z)\SP d\tau \Bigr| < \bar{C}, \,\,\quad |\del_Z
P(v,Z)| < \bar{C}, \quad v\in\RR,\SP Z\in[0,1].
\end{equation*}

\item[$(a3)$] The composition $\varphi \circ \Theta$ of the rate and constitutive
temperature functions satisfies for some $L>0$
\begin{equation}\label{COMPOSLIP}
\bigl|\varphi(\Theta(v,Z))-\varphi(\Theta(\bar{v},\bar{Z}))\bigr| \, \leq \, L
|(v,Z)-(\bar{v},\bar{Z})|
\end{equation}
for all $(v,Z), (\bar{v},\bar{Z})\in \RR\times[0,1].$

\end{itemize}

\par\smallskip

Under $(a1)$-$(a3)$ the system \eqref{ISCOMB} admits an entropy-entropy flux pair
$\bar{\eta},\bar{q}$ of the form:
\begin{equation*}
\begin{aligned}
\eta(v,u,Z) = \frac{1}{2}u^2 - \biggl(\int^{v}_0 \SP P(\tau,Z) \SP d\tau \biggr) +
B(Z)\,,
\quad q(v,u,Z) = P(v,Z)u
\end{aligned}
\end{equation*}
with $B(Z)$ an {\it arbitrary function}. To ensure the convexity of $\eta$ we assume, in addition to (a1)-(a3), that
\begin{equation*}
  B''(Z) > \Big(1+\frac{2}{\Gamma}\bar{C}^2+\bar{C}\Big), \quad Z \in [0,1]\,,
\end{equation*}
in which case
\begin{equation}\label{COMBCONV}
0<  \min\big(\frac{\gamma}{2},1\big) \, \leq \, \D ^2 \eta(v,u,Z) \leq \max\Big(1, \Gamma+\bar{C}, \frac{2}{\Gamma}\bar{C}^2+2\bar{C} \Big).
\end{equation}

For the system \eqref{ISCOMB} the second order relaxation system \eqref{RXBLAW1D} reads
\begin{equation*}
\begin{aligned}
\left[\begin{aligned}
&v\\
&u\\
&Z\\
\end{aligned}\right]_t +
\left[\begin{aligned}
-u&\\
P(v,&Z) \\
0&\\
\end{aligned}\right]_x   =
\left[\begin{aligned}
\!0&\\
\!0&\\
\!-K\varphi(&\Theta(v,Z))\\
\end{aligned}\right]+\eps \Bigg(A \left[\begin{aligned}
&u\\
&v\\
&Z\\
\end{aligned}\right]_{xx}
-
 \left[\begin{aligned}
&u\\
&v\\
&Z\\
\end{aligned}\right]_{tt}\Bigg)
\end{aligned}
\end{equation*}
Clearly by \eqref{COMBCONV} hypotheses \eqref{RXENTPROP} and \eqref{AHYPSCENT1D} are satisfied with
\begin{equation*}
  \alpha = \max\Big(1, \Gamma+\bar{C}, \frac{2}{\Gamma}\bar{C}^2+2\bar{C} \Big), \quad \beta=\min\big(\frac{\gamma}{2},1\big), \quad A=2\alpha{\bf I}
\end{equation*}
while, in view of \eqref{COMPOSLIP}, the source $G$ satisfies \eqref{LISRC}. Thus, one may use Proposition \ref{Lgen-source-2}, Theorem \ref{COMP-LIPS-1D} and Theorem \ref{gen-source} to conclude about the stability, compactness and the error estimates before the formation of shocks, respectively.

\section{Acknowledgements}
A.M. thanks R.\ Young and A. Tzavaras  for several fruitful conversations during the course of this investigation.
This material is based upon work supported by the National Science
Foundation under Grant No. 0932078 000, while K.T. was in residence at
the Mathematical Sciences Research Institute in Berkeley, California, during
the  Program on ``On Optimal Transport in Geometry and Dynamics" in Fall 2013.
K.T. thanks MSRI and the organizers of the program for the hospitality, support and for providing an excellent academic environment for scientific research.  K.T.  also acknowledges the support in part by the National Science foundation under the grant DMS-1211519 and the support by the Simons Foundation under the grant $\# 267399$. K.T. thanks I.\  Kyza for several discussions in the course of the investigation and for bringing to her attention the work by Katsaounis and Makridakis \cite{KM-2003}.


\begin{thebibliography}{99}
\setlength{\parskip}{1em}

\bibitem{AMT-2005}
{Ch. Arvanitis, Ch. Makridakis, A.E. Tzavaras}, Stability and
Convergence of a Class of Finite Element Schemes for Hyperbolic
Conservation Laws, {\em SIAM J. on Numer. Analysis} (2005), 42, 4, 1357-1393.

\bibitem{AN-1996}  D.\ Aregba-Driollet, R.\ Natalini, Convergence of relaxation schemes for conservation laws, {\em Appl.\ Anal.} (1996), 61, 163-193.

\bibitem{AN-1998} D.\ Aregba-Driollet, R.\ Natalini, Discrete kinetic schemes for multi-dimensional  conservation laws, {\em SIAM J. Numer. Anal.} (2000), 37, 6, 1973-2004.

\bibitem{Brenier2000}
{Y.\ Brenier}, {\em Convergence of the Vlasov-Poisson system to the  incompressible Euler equations},
Comm.\ Partial Diff.\ Equations, {25}, 737-754, 2000.

\bibitem{BNP2004} Y.\ Brenier, R.\ Natalini and M.\ Puel, {\em On the relaxation approximation  of the incompressible Navier-Stokes equations }, Proc.\ Amer.\ Math.\ Soc., {132}, 1021-1028, 2004.

\bibitem{BV2005} F.\ Berthelin and A.\ Vasseur, {\em From Kinetic Equations to Multidimensional Isentropic Gas Dynamics Before Shocks}, { SIAM J. Math. Anal.} {36,} 1807-1835, 2005.

\bibitem{CHT03} G.-Q. Chen, D. Hoff and K. Trivisa, Global Solutions to a Model for Exothermically Reacting, Compressible Flows with Large Discontinuous Initial Data. {\em Arch. Ration. Mech. Anal.} (2003), 166, 321-358.

\bibitem{CLL-1994} G.-Q. \ Chen, C.D.\ Levermore and T.-P.\ Liu, Hyperbolic conservation laws with stiff relaxation terms and entropy, {\em Comm. \ Pure Appl. Math.} (1994), {47}, 789-830.

\bibitem{CP-1998} F.\ Coquel, B.\ Perthame, Relaxation of energy and approximate Riemann solvers for general pressure laws in fluid dynamics, {\em SIAM J.\ Numer.\ Anal.\ } (1998), 35 , 2223-2249.

\bibitem{Dafermos79}
{C.M. Dafermos}, {\em The second law of thermodynamics and stability.} {Arch. Rational Mech. Anal.} (1979), {70}, 167-179.

\bibitem{Dafermos06} C.~M. Dafermos,
Hyperbolic systems of balance laws with weak dissipation, {\em J. Hyp. Diff. Equations} (2006), {3}, 505--527.

\bibitem{Dafermos10}
{C.M. Dafermos},  {\em Hyperbolic conservation laws in continuum
physics} (Third edition. Grundlehren der Mathematischen
Wissenschaften, 325. Springer-Verlag, Berlin, 2010).

\bibitem{DiPerna83} R.\ DiPerna, Convergence of approximate solutions to conservation laws, {\em Arch. Rational Mech. Analysis} (1983), {82}, 75-100.



\bibitem{JX-1995} S.\ Jin and X.\ Xin, The relaxation schemes for systems of conservation laws in arbitrary space dimensions, {\em Comm.\ Pure Appl.\ Math.\ } (1995), {48}, 235-277.

\bibitem{HN03} B.\ Hanouzet, R.\ Natalini, {Global existence of smooth solutions for partially dissipative hyperbolic systems with a convex entropy.} {\it Arch. Rational Mech. Anal.,} (2003), {169}, 89-117.
%


\bibitem{MT-2013a} A.\ Miroshnikov, K.\ Trivisa, Relative entropy in hyperbolic relaxation for balance laws, in {\em  Commun.\ Math. Sci.\ } (to appear).

\bibitem{N-1996} R.\ Natalini, Convergence to equilibrium for the relaxation approximations of conservation laws, {\em Comm.\ Pure Appl.\ Math.\ } {49} (1996), 795-823.

\bibitem{P-1998} B.\ Perthame,  An introduction to kinetic schemes for gas dynamics. {\em An introduction to Recent Developments in Theory and Numerics for Conservation Laws} (1998),  D.\ Kroener, M.\ Ohlberger and C.\ Rhode, eds, Lecture Notes in Comp. Science and Engin., {5} Springer, Berlin, 1-27.

\bibitem{KM-2003} T.\ Katsaounis, C.\ Makridakis,  Relaxation models and finite element schemes for the shallow water equations. {\em Hyperbolic problems: theory, numerics, applications} (2003), 621-631, Springer, Berlin.



\bibitem{LF-2002} P.G. LeFloch {\it Hyperbolic Systems of Conservation Laws: The Theory of Classical and Nonclassical Shock Waves}, Birkhauser, 2002 edition.

\bibitem{GT-2001} L.\ Gosse, A.\ Tzavaras,  Convergence of relaxation schemes to the equations of elastodynamics. {\em Math. Comp.} (2001), {70},  234, 555-577.

\bibitem{Murat-1981} F.\ Murat, L'injection du cone positif de $H^{-1}$ dans $W^{-1,q}$ est compacte pour tout $q < 2,$ {\em J. Math. Pures Appl.} (1981), {60} ,  309-322.


\bibitem{Serre-2000} D.\ Serre, Relaxation semi-lin\'{e}aire et cin\'{e}tique des syst\'{e}mes de lois de conservation, {\em Ann.\ Inst.\ H.\  Poincar\'{e}, Anal.\ Non Lin\'{e}aire}. (2000), 17, 2, 169-192

\bibitem{Serre-book} S. Serre, {\it Systems of Conservation Laws 2: Geometric Structures, Oscillations, and Initial-Boundary Value.} Cambridge University Press 2000.

\bibitem{Sh} J.\  Shearer, Global existence and compactness in $L^p$ for the quasilinear wave equation, {\em Comm. Partial Diff. Eqs.} (1994), {19},  1829-1877.

\bibitem{SSh} D.\ Serre, J.\ Shearer,  Convergence with physical viscosity for nonlinear elasticity, (1993). (preprint).

\bibitem{Tartar-1979} L.\ Tartar, Compensated Compactness and applications to partial differential equations. {\em Nonlinear Analysis and Mechanics - Heriot-Watt symposium - vol IV,} R.J.\ Knops, ed., Pitman Research notes in Mathematics, New York (1979), pp. 136-192.

\bibitem{T-1998} A.\ Tzavaras, Viscosity and relaxation approximations for hyperbolic systems of conservation laws. {\em An introduction to Recent Developments in Theory and Numerics for Conservation Laws}, D.\ Kroener, M. Ohlberger and C.\ Rhode, eds, Lecture Notes in Comp. Science and Engin., {5} Springer, Berlin, 1998, 73-122.



\bibitem{T-1999} A.\  Tzavaras,  Materials with internal variables and relaxation to conservation laws. {\em Arch. Ration. Mech. Anal.} {146}, 2, (1999), 129Р155.


\end{thebibliography}
\end{document}